\newcommand{\smallsum}{\textstyle\sum}
\newtheorem{lemma}{Lemma}[section]
\newtheorem{corollary}[lemma]{Corollary}
\newtheorem{theorem}[lemma]{Theorem}
\newtheorem{setting}{Setting}[section]
\renewcommand{\O}{ { \bf O } }
\renewcommand{\L}{\mathcal{L}}
\providecommand{\N}{{\ensuremath{\mathbbm{N}}}}
\providecommand{\R}{{\ensuremath{\mathbbm{R}}}}
\providecommand{\E}{{\ensuremath{\mathbb{E}}}}
\providecommand{\B}{{\ensuremath{\mathcal{B}}}}
\providecommand{\F}{{\ensuremath{\mathcal{F}}}}
\providecommand{\f}{{\ensuremath{\mathbb{F}}}}
\renewcommand{\H}{{\ensuremath{\mathbb{H}}}}
\renewcommand{\P}{{\ensuremath{\mathbb{P}}}}
\providecommand{\1}{{\ensuremath{\mathbbm{1}}}}
\providecommand{\values}{{\ensuremath{\mathfrak{v}}}}
\providecommand{\N}{{\ensuremath{\mathbbm{N}}}}
\providecommand{\R}{{\ensuremath{\mathbbm{R}}}}
\providecommand{\E}{{\ensuremath{\mathbb{E}}}}
\renewcommand{\P}{{\ensuremath{\mathbb{P}}}}
\providecommand{\1}{{\ensuremath{\mathbbm{1}}}}
\providecommand{\HS}{{\ensuremath{\textup{HS}}}}
\begin{document}

\title{Exponential moment bounds and strong\\ convergence
	 rates for 
	 tamed-truncated 
numerical\\
 approximations of stochastic convolutions}
\author{Arnulf Jentzen$^{1} $, Felix Lindner$^{2} $, and Primo\v{z} Pu\v{s}nik$^{3} $
	\bigskip 
	\\
	\small{$^1$ Seminar for Applied Mathematics, Department of Mathematics,}
	\\
	\small{ETH Zurich, Switzerland, 
		e-mail: arnulf.jentzen@sam.math.ethz.ch} 
	\smallskip
	\\
	\small{$^2$  Institute of Mathematics, Faculty of Mathematics and Natural Sciences,}
	\\
	\small{University of Kassel, Germany,
		e-mail: lindner@mathematik.uni-kassel.de}
	\smallskip
	\\
	\small{$^3$ Seminar for Applied Mathematics, Department of Mathematics,}
	\\
	\small{ETH Zurich, Switzerland,
		e-mail: primoz.pusnik@sam.math.ethz.ch}
}


\maketitle

\begin{abstract}
	%
	In this article we establish
	exponential moment bounds,
	moment bounds in 
	fractional order smoothness spaces, 
	a uniform H\"older continuity in time,
	and
	strong convergence rates    
	for a class of fully discrete exponential Euler-type numerical approximations of infinite dimensional stochastic convolution processes.
	The considered approximations involve specific taming and truncation terms and are therefore well suited to be used in the context of SPDEs with 
	non-globally 
	Lipschitz continuous
	nonlinearities. 
\end{abstract}
\tableofcontents 
\newpage 
\section{Introduction}
Stochastic partial differential equations
(SPDEs) of evolutionary type
are important modeling tools
in economics and the natural sciences
(see, e.g., Harms et al.\ \cite[Theorem~3.5]{HarmsStefanovitsTeichmannWutrich2017},
Filipovi\'c et al.\ \cite[Equation~(1.2)]{FilipovicTappeTeichmann2010},
Bl\"omker \& Romito~\cite[Equation~(1)]{BlomkerRomito2015},
Hairer~\cite[Equation~(3)]{HairerKPZ},
Mourrat \& Weber~\cite[Equation~(1.1)]{MourratWeber2017}, 
Birnir~\cite[Equation~(7)]{Birnir2013a},
and
Birnir~\cite[Equation~(1.5)]{Birnir2013b}).
However, exact solutions to SPDEs are 
usually not known explicitly.
Therefore, it has been and still is a very active research area to develop and analyze numerical approximation methods which approximate the exact solutions of SPDEs with a reasonable approximation accuracy in a reasonable computational time.
It is known that in order to
approximate the exact solutions of SPDEs
appropriately, the
numerical methods employed
should enjoy similar statistical properties,
such as finite uniform moment bounds
(see, e.g., 
Hutzenthaler \& Jentzen~\cite{HutzenthalerJentzenMemoires2015} and the references therein).
Unfortunately,
moments of the easily realizable Euler-Maruyama
and exponential Euler approximation methods
are known to diverge
for some 
stochastic differential equations (SDEs) and SPDEs with superlinearly growing nonlinearties
(see, e.g., Hutzenthaler et al.\ \cite{HutzenthalerJentzenKloeden2011, HutzenthalerJentzenKloeden2013}). 
This poses the challenge to develop new efficient approximation
methods which preserve finite moments
(see, e.g., 
Hutzenthaler et al.\ \cite[Theorem~1.1 and Lemma~3.9]{HutzenthalerJentzenKloeden2012},  
Gan \& Wang~\cite[Theorem~3.2 and Lemma~3.4]{WangGan2013},
Hutzenthaler \& Jentzen~\cite[Corollary~2.21 and Theorem~3.15]{HutzenthalerJentzenMemoires2015},
%
Tretyakov \& Zhang~\cite[Theorem~2.1]{TretyakovZhang2013},  
%
Sabanis~\cite[Theorem~2.2, Corollary~2.3, and Lemmas~3.2, 3.3]{Sabanis2013ECP},
and
%
Sabanis~\cite[Theorems~1--3, Lemmas~1,2]{Sabanis2016}
for finite dimensional 
stochastic evolution equations 
and, e.g., 
\cite[Proposition~7.3, Theorem~7.6]{JentzenPusnik2015},
Becker \& Jentzen~\cite[Corollaries~5.1, 6.15, and 6.17]{BeckerJentzen2016},
Becker \& Jentzen~\cite[Lemma~5.4, Theorem~1.1]{BeckerGessJentzenKloeden2017}
for infinite dimensional SPDEs). 
In this context, it has been revealed recently in, e.g., 
Hutzenthaler \& Jentzen~\cite[Theorem~1.3]{HutzenthalerJentzen2014PerturbationArxiv}
(cf., e.g.,
D\"orsek~\cite[Proposition~3.1]{Dorsek2012},
Hutzenthaler et al.\ \cite[Corollary~2.10]{HutzenthalerJentzenWang2017Published},
and~\cite[Corollary~3.4]{JentzenPusnik2016})
that finite exponential moments of numerical schemes are crucial for deriving strong convergence with rates  in the case of SDEs and SPDEs
with
non-globally Lipschitz continuous 
and
non-globally monotone
nonlinearities.

In this article we derive finite uniform exponential moment bounds for a class of fully discrete exponential Euler-type numerical approximations of infinite dimensional stochastic convolution processes  
(see Corollary~\ref{corollary:ExpMomentsNoise}
in Section~\ref{section:Conslusion} below). The considered numerical approximations involve specific taming and truncation terms and are therefore well suited to be applied in the context of semi-linear SPDEs with non-globally Lipschitz continuous and non-globally monotone nonlinearities. In addition to deriving exponential moment bounds we also establish finite uniform moment bounds in fractional order smoothness spaces (see Corollary~\ref{Corollary:FiniteMoments}
in Section~\ref{section:Conslusion} below), a uniform H\"older continuity in time 
(see Corollary~\ref{corollary:App_noise_regularity} in Section~\ref{section:Conslusion} below) as well as strong convergences rates for the considered numerical approximations 
(see Corollary~\ref{corollary:noise_approximation_converges} in Section~\ref{section:Conslusion} below). 
The application of our results to semi-linear SPDEs such as stochastic Burgers equation will be the subject of a future research article.
In 
Theorem~\ref{theorem:main}
below
we 
illustrate the 
results established  
in 
 Corollary~\ref{corollary:noise_approximation_converges}
and
Corollary~\ref{corollary:ExpMomentsNoise}.
The stochastic convolution process and its numerical approximations are denoted by 
$ O \colon [0,T] \times \Omega \to D( (-A)^\gamma ) $ 
and 
$ \O^{ M, N} \colon [0,T] \times \Omega \to P_N(H) $, 
$ M, N \in \N $, respectively.
\begin{theorem}
	\label{theorem:main}
	\sloppy
	Let
	$ ( H, \langle \cdot, \cdot \rangle_H, \left \| \cdot \right\|_H ) $
	and 
	$ \left( U, \langle \cdot, \cdot \rangle_U, \left\| \cdot \right \|_U \right) $
	be 
	separable 
	$ \R $-Hilbert spaces,
	let 
	$ ( h_n )_{ n \in \N} \subseteq H $
	be an 
	orthonormal basis of $ H $,  
	let
	$ (\values_n)_{n \in \N} \subseteq \R $
	satisfy
	$ \sup_{n \in \N}\values_n  < 0 $,
	let
	$ A \colon D( A ) \subseteq H \to H $ 
	be the linear operator 
	which satisfies   
	$ D(A) = \{
	v \in H \colon \sum_{n = 1}^\infty | \values_n \langle h_n, v \rangle_H  | ^2 <  \infty
	\} $
	and  
	$ \forall \, v \in D(A) \colon 
	A v = \sum_{ n = 1 }^\infty \values_n \langle h_n, v \rangle _H h_n $,
	let 
	$ p, T \in (0, \infty) $, 
	$ \beta \in [0, \infty) $,  
	$ \gamma \in [0, \nicefrac{1}{2} + \beta) $,
	$ \eta \in [0, \nicefrac{1}{2} + \beta - \gamma ) $,  
	$ \rho \in [0, \nicefrac{1}{2} + \beta - \gamma) \cap [0, \nicefrac{1}{2} ) $, 
	$ B \in \HS( U, D ( (-A)^\beta ) ) $, 
	let
	$ ( \Omega, \F, \P, ( \f_t )_{t \in [0,T]} ) $ 
be a filtered probability space which fulfills the usual conditions,
	let $ (W_t)_{t\in [0,T]} $
	be an $ \operatorname{Id}_U $-cylindrical  
	$ ( \f_t )_{t\in [0,T]} $-Wiener process,
	let
	$ O \colon [0,T] \times  \Omega \to D ( (-A)^\gamma ) $
	be a stochastic process which satisfies for every
	$ t \in [0,T] $ that
	$ 
	\P( O_t = \int_0^t 
	e^{(t-s)A} 
	B 
	\, 
	dW_s ) = 1 $,
	let 
	$ ( P_N )_{ N \in \N } \subseteq L(H) $ 
	satisfy
	for every $ N \in \N $,
	$ x \in H $ 
	that
	$ P_N(x) = \sum_{n=1}^N \langle h_n, x \rangle_H h_n $,
	let   
	$ W^N \colon [0,T] \times \Omega \to P_N(H) $,
	$ N \in \N $, 
	be 
	stochastic processes  
	which satisfy for every 
	$ N \in \N $,
	$ t \in [0,T] $
	that 
	$ \P ( W^N_t = \int_0^t P_N B \, dW_s ) = 1 $,
	%
	%
	let 
	$ \chi^{ M, N } \colon [0,T] \times \Omega \to [0,1] $,
	$ M, N\in \N $,
	be 
	$ ( \f_t )_{ t \in [0,T] } $-adapted stochastic processes
	which satisfy  
	$  \sup_{M, N\in \N} \sup_{ s \in [0,T] } 
	( 	
	\E [ 
	| \chi_s^{ M, N }  
	-
	1 
	|^{ \max\{ p, 2 \} } ] M^{ \max\{ p, 2 \} \rho } ) < \infty $,
	and
	let
	$ \O^{M, N} 
	\colon 
	[0, T] \times \Omega \to P_N(H) $,
	$ M, N \in \N $,
	be 
	stochastic processes  
	which satisfy for every 
	$ M, N \in \N $,
	$ m \in \{ 0, 1, \ldots, M-1 \} $,   
	$ t \in [ \nicefrac{mT}{M}, \nicefrac{ (m+1) T }{M} ] $ 
	that
	$ \O^{M, N}_0 = 0 $ 
	and 
		\begin{equation}
	\begin{split} 
	\label{eq:Method}
	\O_t^{M, N}
	=
	e^{(t - \nicefrac{m T}{ M } )A}
	\Big(  
	\O_{ \nicefrac{ m T }{ M } }^{ M, N }
	+
	\chi_{ \nicefrac{ m T }{ M } }^{ M, N } 
	\Big[
	\tfrac{ 	
		%
		%
		W^N_t - W^N_{ \nicefrac{ m T }{M} } 
	}{
		1 + 
		\|  
W^N_t - W^N_{ \nicefrac{ m T }{ M } } 
		\|_H^2
	}
\Big]
\Big) 
	.
	\end{split} 
	\end{equation}
	Then 
	\begin{enumerate}[(i)]
		\item  \label{item:FiniteExpIntro}
		it holds that
	$	\inf_{ \varepsilon \in (0,\infty) }
		\sup_{ M, N \in \N }
		\sup_{ t \in [0,T] }
		\E\big[ \exp \! \big( \varepsilon \| \O_t^{ M, N } \|_H^2 \big) \big] 
		< 
		\infty $ and 
	\item\label{item:RateIntro}  there exists a real number
	$ C \in \R $
	such that for every
	$ M, N \in \N $
	it holds that
	\begin{equation}
	\begin{split}
	\label{eq:RateIntro}
	&
	\sup\nolimits_{t\in [0,T]} 
	\big( 
	\E \big[ 
	\| (-A)^{ \gamma } ( \O_t^{ M, N } -  O_t ) \|_{ H }^p
	\big] 
	\big)^{ \nicefrac{1}{p} } 
	\leq
	C
	\big(
	(
	\inf\nolimits_{ n \in \{ N+1, N+2, \ldots \} }
	| \values_n |
	)^{ - \eta }
	+
	M^{-\rho}
	\big)
	.
	\end{split}
	\end{equation}
\end{enumerate}
\end{theorem}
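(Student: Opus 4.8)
The plan is to derive both assertions from the general results established later in the paper: assertion~\eqref{item:FiniteExpIntro} from the uniform exponential moment bound of Corollary~\ref{corollary:ExpMomentsNoise} and assertion~\eqref{item:RateIntro} from the strong convergence estimate of Corollary~\ref{corollary:noise_approximation_converges}. The first task is therefore to check that the present hypotheses match the abstract setting, which is essentially bookkeeping. The condition $\sup_{n\in\N}\values_n<0$ forces the spectrum of $-A$ into $[-\sup_{n\in\N}\values_n,\infty)\subseteq(0,\infty)$, so $(e^{tA})_{t\in[0,\infty)}$ is an analytic semigroup of contractions on every $D((-A)^r)$, the operators $(-A)^{-r}$, $r\ge 0$, are bounded on $H$, one has $\|P_NB\|_{\HS(U,H)}\le\|B\|_{\HS(U,H)}\le\|(-A)^{-\beta}\|_{L(H)}\|B\|_{\HS(U,D((-A)^\beta))}<\infty$, and, because $\gamma<\nicefrac{1}{2}+\beta$, the process $O$ really is $D((-A)^\gamma)$-valued. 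The recursion~\eqref{eq:Method} is precisely a fully discrete tamed--truncated exponential Euler scheme with step size $T/M$ and Galerkin projection $P_N$; the $(\f_t)_{t\in[0,T]}$-adaptedness of $\chi^{M,N}$ and the imposed bound on $\E[|\chi^{M,N}_s-1|^{\max\{p,2\}}]M^{\max\{p,2\}\rho}$ are the conditions needed on the truncation; and the stated ranges of $\gamma,\eta,\rho$ fall in the admissible regions of the two corollaries.

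For assertion~\eqref{item:FiniteExpIntro} I would reproduce, or invoke, the one-step conditional exponential moment estimate behind Corollary~\ref{corollary:ExpMomentsNoise}. Write $t_m=\nicefrac{mT}{M}$ and let $R_m=(W^N_{t_{m+1}}-W^N_{t_m})/(1+\|W^N_{t_{m+1}}-W^N_{t_m}\|_H^2)$. The two facts to exploit are that $\|R_m\|_H\le\nicefrac{1}{2}$ for every outcome and that, conditionally on $\f_{t_m}$, the increment $W^N_{t_{m+1}}-W^N_{t_m}$ is centred Gaussian with covariance operator $\tfrac{T}{M}(P_NB)(P_NB)^{*}$, whose trace and operator norm are $O(\nicefrac{1}{M})$. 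Starting from $\O^{M,N}_{t_{m+1}}=e^{(T/M)A}(\O^{M,N}_{t_m}+\chi^{M,N}_{t_m}R_m)$, expanding $\|\cdot\|_H^2$, using that $e^{(T/M)A}$ is a contraction and $\chi^{M,N}_{t_m}\in[0,1]$, splitting $R_m$ into its conditionally centred Gaussian part and a remainder of cubic order, and estimating the cross terms by Young's inequality together with the Gaussian exponential moment formula, one arrives at a constant $\kappa\in(0,\infty)$ such that
\[
\E\big[\exp\!\big(\eps\|\O^{M,N}_{t_{m+1}}\|_H^2\big)\,\big|\,\f_{t_m}\big]\le e^{\kappa\eps/M}\exp\!\big(\eps(1+\kappa/M)\|\O^{M,N}_{t_m}\|_H^2\big)
\]
for all sufficiently small $\eps\in(0,\infty)$ and all $M,N\in\N$, $m\in\{0,1,\dots,M-1\}$. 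Iterating this over the $M$ time steps from $\O^{M,N}_0=0$ (a discrete Gronwall argument for exponential moments) and passing from the grid points to arbitrary $t\in[0,T]$ by conditioning produces a bound uniform in $M$ and $N$; the infimum over admissible $\eps$ then gives~\eqref{item:FiniteExpIntro}.

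For assertion~\eqref{item:RateIntro} I would split $(-A)^\gamma(\O^{M,N}_t-O_t)$ by the triangle inequality and bound each contribution in $L^p(\P;H)$: (a) the spatial error $(-A)^\gamma(O_t-\tilde O^N_t)$ with $\tilde O^N_t=\int_0^te^{(t-s)A}P_NB\,dW_s$, which by the It\^o isometry, the $\HS$-regularity of $B$, and the semigroup smoothing applied to the modes $h_n$ with $n>N$ is of order $(\inf_{n>N}|\values_n|)^{-\eta}$; (b) the time-discretisation error between $\tilde O^N$ and the plain (untamed, untruncated) exponential Euler interpolant---which for this linear equation has finite moments---of order $M^{-\rho}$ by the uniform H\"older continuity in time of the convolution, available since $\rho<\min\{\nicefrac{1}{2},\nicefrac{1}{2}+\beta-\gamma\}$; (c) the taming error, treated via $\|v/(1+\|v\|_H^2)-v\|_H\le\|v\|_H^3$ and the $L^{3p}$-bound $O(M^{-3/2})$ on the increments $W^N_t-W^N_{t_m}$, using that these cubic corrections form, conditionally, a martingale difference sequence; and (d) the truncation error, bounded directly by the moment condition imposed on $\chi^{M,N}-1$. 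Since the scheme is recursive, the per-step errors in (b)--(d) have to be propagated through the $M$ steps, which---again by contractivity of $e^{(T/M)A}$, $\chi^{M,N}\in[0,1]$, and the moment bounds from~\eqref{item:FiniteExpIntro}---costs only a bounded factor and keeps the rate; a discrete Gronwall inequality then yields~\eqref{eq:RateIntro}.

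The hard part will be assertion~\eqref{item:FiniteExpIntro}. A crude treatment of the tamed increments loses everything, since each is bounded only by $\nicefrac{1}{2}$ and there are $M$ of them, so one genuinely needs the combined effect of the deterministic taming cap, the $O(\nicefrac{1}{M})$ conditional variance of the Wiener increments, the contractivity of the semigroup, and a Gronwall lemma tailored to exponential moments in order to close the iteration uniformly in $M$ and $N$; the delicate point is to keep the constant in the one-step estimate small enough that $(1+\kappa/M)^M$ remains bounded. By comparison assertion~\eqref{item:RateIntro} is technically lighter, the only real care being to carry the $D((-A)^\gamma)$-norm through the semigroup smoothing without degrading the spatial rate $\eta$ and to account for the recursion.
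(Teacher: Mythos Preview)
At the level of the theorem itself your reduction is exactly the paper's: item~\eqref{item:FiniteExpIntro} is read off from Corollary~\ref{corollary:ExpMomentsNoise} and item~\eqref{item:RateIntro} from Corollary~\ref{corollary:noise_approximation_converges}, after identifying~\eqref{eq:Method} with the abstract scheme of Setting~\ref{setting:Full_discrete} (time grid $\{0,T/M,\dots,T\}$, Galerkin index set $\{h_1,\dots,h_N\}$, $\hat P_J=\operatorname{Id}_U$). If by ``invoke'' you really mean to cite those corollaries and stop, you are done and nothing further is needed.

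Where you diverge from the paper is in your description of what lies \emph{behind} the corollaries. The paper's central device---which you never mention---is Lemma~\ref{lemma:Ito_representation}: applying It\^o's formula to $\psi(v)=v/(1+\|v\|_H^2)$ turns the \emph{recursive} scheme~\eqref{eq:Numerics} into a single \emph{continuous} mild It\^o process, one stochastic integral over $[0,t]$ whose $\HS$-integrand is pointwise bounded plus a Lebesgue integral whose $H$-integrand is pointwise bounded. The exponential bound (Lemma~\ref{lemma:Exp_regularity}) then follows from a direct computation of the $2n$-th moments of the stochastic part with factorial growth $(2n)!/n!$ (Lemma~\ref{lemma:some_helping_estimates2}) and the uniform bound on the Lebesgue part (Lemma~\ref{lemma:Some_helping_estimates}); the strong error (Lemma~\ref{lemma:error_exact_counterpart}) follows from BDG applied to the difference of integrands. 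No one-step conditioning or discrete Gronwall is used anywhere.

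Your discrete-Gronwall sketch for item~\eqref{item:FiniteExpIntro} has a genuine gap as written. You need the conditional expectation of $\exp\!\big(2\varepsilon\chi\langle\O^{M,N}_{t_m},R_m\rangle_H+\varepsilon\chi^2\|R_m\|_H^2\big)$ to be $\le e^{\kappa\varepsilon/M}\exp(\kappa\varepsilon\|\O^{M,N}_{t_m}\|_H^2/M)$. A Young split of the cross term forces a factor like $\exp(c\varepsilon M\|X\|_H^2)$ with $X$ the Gaussian increment, whose conditional mean is $\exp(c\varepsilon T\|B\|_{\HS}^2+o(1))$, \emph{not} $e^{O(\varepsilon/M)}$; after $M$ iterations this blows up. If instead you decompose $R_m=X-X\|X\|_H^2/(1+\|X\|_H^2)$ and use the Gaussian MGF on the linear part, the remainder still couples the unbounded factor $\|\O^{M,N}_{t_m}\|_H$ to $|\langle\O^{M,N}_{t_m},X\rangle_H|$ at order $\|X\|_H^2$, and the resulting per-step constant again fails to be $1+O(1/M)$. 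The paper sidesteps all of this because after the It\^o representation there is no product of $M$ transition kernels to control---just one stochastic integral with a uniformly bounded integrand. Your outline for item~\eqref{item:RateIntro} via an intermediate untamed Euler interpolant is workable for this linear problem, but it is also not the paper's route: the paper again uses the It\^o representation of $\O^{M,N}$ and compares the integrand directly to $e^{(t-u)A}B$.
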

Observe that 
item~\eqref{item:FiniteExpIntro}
in Theorem~\ref{theorem:main} 
is a direct consequence of
Corollary~\ref{corollary:ExpMomentsNoise}
(with
$ H = H $,
$ U = U $,
$ \H = (h_n)_{n \in \N} $,
$ \values = \values $,
$ A = A $,
$ \beta = \beta $,
$ T = T $, 
$ ( \Omega, \F, \P ) = ( \Omega, \F, \P ) $,
$ ( \f_t )_{ t \in [0,T] } 
=
( \f_t )_{ t \in [0,T] } $,
$ ( W_t )_{ t \in [0,T] } = ( W_t )_{ t \in [0,T] } $,
$ B = B $,
$ P_{ \{ h_1, \ldots, h_N \} } = P_N $,
$ \hat P_{ \mathbb{U} } = \operatorname{Id}_U $,
$ \chi^{ \{ 0, T/M, \ldots, T \}, \{ h_1, \ldots, h_N \}, \mathbb{U} } 
=
\chi^{ M, N } $,
$ \O^{ \{ 0, T/M, \ldots, T \}, \{ h_1, \ldots, h_N \}, \mathbb{U} } 
=
\O^{M, N} $, 
$ \varepsilon = \varepsilon $
for
$ M, N \in \N $,
$ \varepsilon \in [0, \nicefrac{1}{ 
	( 
	8 
	[
	\max \{
	\| B \|_{ \HS(U,H) }, 1 \}
	]^2
	\max \{ T, 1 \}
	)^2
}) $
in the notation of 
Corollary~\ref{corollary:ExpMomentsNoise})
and H\"older's inequality.
Moreover, note that
item~\eqref{item:RateIntro}
in Theorem~\ref{theorem:main} 
follows from
Corollary~\ref{corollary:noise_approximation_converges}
(with
$ H = H $,
$ U = U $,
$ \H = (h_n)_{n \in \N} $,
$ \values = \values $,
$ A = A $,
$ \beta = \beta $,
$ T = T $, 
$ ( \Omega, \F, \P ) = ( \Omega, \F, \P ) $,
$ ( \f_t )_{ t \in [0,T] } 
=
( \f_t )_{ t \in [0,T] } $,
$ ( W_t )_{ t \in [0,T] } = ( W_t )_{ t \in [0,T] } $,
$ B = B $,
$ P_{ \{ h_1, \ldots, h_N \} } = P_N $,
$ \hat P_{ \mathbb{U} } = \operatorname{Id}_U $,
$ \chi^{ \{ 0, T/M, \ldots, T \}, \{ h_1, \ldots, h_N\}, \mathbb{U} } 
=
\chi^{ M, N } $,
$ \O^{ \{ 0, T/M, \ldots, T \}, \{ h_1, \ldots, h_N\}, \mathbb{U} } 
=
\O^{M, N} $,
$ p = \max \{ p, 1 \} $,
$ C = C $, 
$ \gamma = \gamma $,
$ \eta = \eta $,
$ \rho = \rho $,
$ O = O $
for  
$ M, N \in \N $
in the notation of 
Corollary~\ref{corollary:noise_approximation_converges}).
We would like to point out  
that the exponential moment bound in 
Corollary~\ref{corollary:ExpMomentsNoise} below
(see also item~\eqref{item:FiniteExpIntro} above)
is not a direct consequence of the 
one
established
in~\cite{JentzenPusnik2016}.
The difference between the numerical 
method in~\cite[(1) in Section~1]{JentzenPusnik2016}
and~\eqref{eq:Method} above lies, roughly speaking, in the 
less restrictive choice for the truncation functions
$ \chi^{M, N} \colon [0,T] \times \Omega \to [0,1] $, 
$ M, N \in \N $,
in~\eqref{eq:Method}
compared to~\cite[(1) in Section~1]{JentzenPusnik2016}.
This extended class of discrete approximations allows
to truncate the numerical method 
independently of the current value of the approximation process itself. 
The flexibility in the choice of the truncation function
is, in turn, important  
for applying
Corollaries~\ref{Corollary:FiniteMoments}--\ref{corollary:ExpMomentsNoise} 
to establish 
strong convergence rates
for fully discrete numerical approximations
in the case of SPDEs
with non-globally Lipschitz continuous
and non-globally monotone nonlinearities.
The remainder of this article is structured as follows.
In Section~\ref{section:StrengthenedAprioriBound} we analyze a temporally semi-discrete version of our approximation scheme 
(see~\eqref{eq:Numerics} in
Setting~\ref{setting:Strengthened_strong_a_priori_moment_bounds} 
below).
In Subsection~\ref{subsection:Ito} 
 the considered numerical approximations are rewritten as It\^o processes and finite moment 
 bounds in fractional order smoothness spaces are derived. 
 The It\^o representation enables us to
 establish
 H\"older regularity in time in Subsection~\ref{subsection:Holder}.
Furthermore, under additional assumptions on the truncation
function we establish
temporal strong convergence rates of the proposed numerical methods 
in Subsection~\ref{subsection:Error}.
In Subsection~\ref{subsection:ExpoMoments}
we further improve our moment   estimates from
Subsection~\ref{subsection:Ito}
in order to derive finite exponential moment bounds 
in Lemma~\ref{lemma:Exp_regularity}.
The results from Section~\ref{section:StrengthenedAprioriBound}
are combined in Section~\ref{section:Conslusion} to establish 
uniform moment 
bounds in fractional order smoothness spaces,
a uniform H\"older regularity in time,
strong convergence rates,
and uniform exponential moment bounds
for fully discrete 
tamed-truncated numerical approximations
in Corollaries~\ref{Corollary:FiniteMoments}--\ref{corollary:ExpMomentsNoise},
respectively.

\subsection{General setting}
Throughout this article the following setting is frequently used. 
\begin{setting}
	\label{setting:main} 
For every set $ X $ let  $ \mathcal{P}(X) $ be the power set of $ X $,
for every set $ X $ let
$ \mathcal{P}_0(X) $ be the set given by
$ \mathcal{P}_0(X) = \{ \theta \in \mathcal{P}(X) \colon \theta \, \text{has finitely many elements} \} $,
for every 
$ T \in (0,\infty) $ 
let $ \varpi_T $ 
be the set given by
 $ \varpi_T
 =
 \{
 \theta \in \mathcal{P}_0( [0,T] ) \colon
 \{0, T\} \subseteq \theta   
 \} $,
 for every $ T \in (0,\infty) $
 let 
 $ \left | \cdot \right|_T \colon \varpi_T \to [0,T] $
 be the function which satisfies for every
 $ \theta \in \varpi_T $ that
 \begin{equation}
  | \theta  |_T
 =
 \max\!\Big\{
 x \in (0,\infty)
 \colon
 \big(
 \exists \, a, b \in \theta
 \colon
 \big[
 x = b - a
 \text{ and }
 \theta \cap ( a, b ) = \emptyset
 \big]
 \big)
 \Big\}
 \in (0,T],
 \end{equation}
 for every 
 $ \theta \in ( \cup_{T\in (0,\infty)} \varpi_T) $
 let
 $ \llcorner \cdot \lrcorner_\theta \colon [0,\infty) \to [0,\infty) $ 
 be the function which satisfies for every 
 $ t\in (0,\infty) $
 that 
 $ \llcorner t \lrcorner_\theta
 =  \max ( [0,t) \cap \theta ) $
 and  
 $ \llcorner 0 \lrcorner_\theta  
 = 0 $,
 for every measure space
 $ ( \Omega, \F, \mu) $,
 every measurable space $ ( S, \mathcal{S} ) $,
 every set $ R $,
 and every function
 $ f \colon \Omega \to R $
 let
 $ [f]_{\mu, \mathcal{S}} $
 be the set given by
$ [f]_{\mu,\mathcal{S}}=\{g\colon\Omega\to S \colon (\exists\, A\in\F \colon[\mu(A)=0\text{ and }\{\omega\in\Omega \colon f(\omega)\neq g(\omega)\}\subseteq A]) \text{ and } (\forall\, A\in\mathcal{S} \colon g^{-1}(A)\in \F)\} $,
let
$ ( H, \langle \cdot, \cdot \rangle _H, \left \| \cdot \right\|_H ) $
and
$ ( U, \langle \cdot, \cdot \rangle_U, \left\| \cdot \right \|_U ) $
be separable $ \R $-Hilbert spaces,
let 
$ \H \subseteq H $
be a non-empty orthonormal basis of $ H $,  
let
$ \values \colon \H \to \R $
be a function which satisfies
$ \sup_{h\in \H} \values_h < 0 $,
let
$ A \colon D( A ) \subseteq H \to H $ 
be the linear operator which satisfies
$ D(A) = \{
v \in H \colon \sum_{ h \in \H}  | \values_h \langle h, v \rangle_H  |^2 <  \infty
 \} $ 
and  
$ \forall \, v \in D(A) \colon 
A v = \sum_{ h \in \H} \values_h \langle h, v \rangle _H h $,
and
let
$ (H_r, \langle \cdot, \cdot \rangle_{H_r}, \left \| \cdot \right \|_{H_r} ) $, $ r\in \R $, be a family of interpolation spaces associated to $ -A $
(cf., e.g., \cite[Section~3.7]{SellYou2002}).
\end{setting}
\section[Regularity properties of 
temporal   
approximations
of stochastic convolutions]{Regularity properties of 
	temporally 
	semi-discrete 
	 tamed-truncated  
	 approximations
of stochastic convolutions}
\label{section:StrengthenedAprioriBound}
\begin{setting} 
\label{setting:Strengthened_strong_a_priori_moment_bounds}
Assume Setting~\ref{setting:main},
let 
$ \beta \in [0,\infty) $, 
$ \gamma \in [0, \nicefrac{1}{2} + \beta ) $,
$ T \in (0,\infty) $,
$ \theta \in \varpi_T $,
$ B \in  \HS(U,H_\beta) $,
let
$ \mathbb{B} \in L(H, U ) $ 
be the bounded linear operator
which satisfies for every
$ u \in U $, $ h \in H $ that
$ \langle B u, 
h \rangle_H 
=
\langle u, 
\mathbb{B} h \rangle_U $, 
let
$ ( \Omega, \F, \P, ( \f_t )_{t \in [0,T]} ) $ 
be a filtered probability space which fulfills the usual conditions,
let $ (W_t)_{t\in [0,T]} $
be an $ \operatorname{Id}_U $-cylindrical  
$ ( \f_t )_{t\in [0,T]} $-Wiener process,  
let
$ \chi \colon [0,T] \times \Omega \to [0, 1] $ 
be an $ ( \f_t )_{t\in [0,T]} $-adapted
stochastic process,
and
let 
$ \O \colon [0,T] \times \Omega \to H_\gamma $ 
be a   
stochastic process
which satisfies for every $ t \in [0,T] $ that 
$ \O_0  = 0 $ 
and
\begin{equation}
\begin{split}
\label{eq:Numerics}
[
\O_t
]_{\P, \mathcal{B}( H_\gamma) } 
& 
= 
 [
e^{(t-\llcorner t \lrcorner_\theta )A} 
\O_{
	\llcorner t \lrcorner_\theta
}  
 ]_{\P, \mathcal{B}( H_\gamma )}  
+ 
\frac{
	\int_{ \llcorner t \lrcorner_\theta }^t
	\chi_{  
	\llcorner t \lrcorner_\theta}
	\,
	e^{(t-\llcorner t \lrcorner_\theta )A}   
	B 
	\, 
	dW_s
}{
	1 + 
	\|
	\int_{\llcorner t \lrcorner_\theta}^t  
	B 
	\, 
	dW_s
	\|_H^2
}   
.
\end{split}
\end{equation}
\end{setting}
\subsection[Moment bounds
for temporal approximations of stochastic convolutions]{Moment bounds
for temporally semi-discrete approximations of stochastic convolutions}
\label{subsection:Ito}
In this subsection we
provide 
in
Lemma~\ref{lemma:Ito_representation} 
a representation
of 
the approximation process
$ \O \colon [0,T] \times \Omega \to H_\gamma $
from Setting~\ref{setting:Strengthened_strong_a_priori_moment_bounds} 
as a mild It\^o process
(cf.\ Da Prato et al.\ \cite[Definition~1]{DaPratoJentzenRoeckner2012}).
This enables us
to obtain certain
moment bounds
for
$ \O \colon [0,T] \times \Omega \to H_\gamma $ 
in
Lemmas~\ref{lemma:some_helping_estimates}
and~\ref{lemma:more_regularity}.
\begin{lemma}
	\label{lemma:Ito_representation}
	Assume Setting~\ref{setting:Strengthened_strong_a_priori_moment_bounds} 
	and for every $ s \in [0,T] $ let
	$ X_{s, (\cdot) }(\cdot) 
	=
	( X_{s,t}(\omega) )_{ (t,\omega)\in[s,T]\times\Omega }
	\colon 
	[s,T] \times \Omega
	\to H_\beta $
	be an $ ( \f_t )_{ t \in [s,T] } $-adapted stochastic process 
	with continuous sample paths 
	which satisfies for every
	$ t \in [s, T] $
	that
	$
	[
	X_{s,t}
	]_{\P, \mathcal{B}( H_\beta )}
	= \int_s^t B 
	\, 
	dW_u
	$.
	Then it holds for every $ t \in [0,T] $ that
	\begin{equation}
	\begin{split}
	& 
	[ \O_t ]_{\P, \mathcal{B}(H_\gamma)} 
	=
	\int_0^t 
	\chi_{ \llcorner u \lrcorner_\theta } 
	e^{(t- \llcorner u \lrcorner_\theta )A}
	\Big[
	\tfrac{B  }{1+\| X_{{\llcorner u \lrcorner_\theta}, u} \|_H^2}
	-
	\tfrac{2 X_{{\llcorner u \lrcorner_\theta}, u} \langle X_{{\llcorner u \lrcorner_\theta}, u}, B  (\cdot) \rangle_H}
	{ ( 1 + \| X_{{\llcorner u \lrcorner_\theta}, u} \|_H^2 )^2 } 
	\Big]
	\, 
	dW_u 
	\\
	&  
	+
	\Big[
	\int_0^t
	\chi_{\llcorner u \lrcorner_\theta}
	e^{(t-\llcorner u \lrcorner_\theta)A}
	\Big( 
	\tfrac{ 4  
		\|  \mathbb{B}
		X_{\llcorner u \lrcorner_\theta, u}  \|_U^2
		X_{\llcorner u \lrcorner_\theta, u}	
	}
	{(1 + \| X_{\llcorner u \lrcorner_\theta, u} \|_H^2)^3}
	-
	\tfrac{
		2 B  
		\mathbb{B}
		X_{\llcorner u \lrcorner_\theta, u}
		+ 
		\| B   \|_{\HS(U,H)}^2
		X_{\llcorner u \lrcorner_\theta, u} 
	}{
		( 1 + \| X_{\llcorner u \lrcorner_\theta, u} \|_H^2 )^2
	}
	\Big) 
	\, 
	du
	\Big]_{\P, \mathcal{B}(H_\gamma)} 
	.
	\end{split}
	\end{equation}
\end{lemma}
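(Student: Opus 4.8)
The plan is to apply It\^o's formula for mild It\^o processes (in the sense of Da Prato, Jentzen \& R\"ockner \cite{DaPratoJentzenRoeckner2012}) on each subinterval $[\llcorner t \lrcorner_\theta, t]$ determined by the partition $\theta$, and then to patch the resulting local representations together into the global formula. The key observation is that on such a subinterval, by \eqref{eq:Numerics}, the process $\O_t$ is the semigroup $e^{(t-\llcorner t\lrcorner_\theta)A}$ applied to a fixed $\f_{\llcorner t\lrcorner_\theta}$-measurable starting value plus a term that depends only on the Wiener increment $X_{\llcorner t\lrcorner_\theta,t} = \int_{\llcorner t\lrcorner_\theta}^t B\,dW_s$ through the taming map $\Phi\colon H_\beta \to H_\gamma$, $\Phi(v) = e^{(t-\llcorner t\lrcorner_\theta)A}\tfrac{\chi_{\llcorner t\lrcorner_\theta} v}{1+\|v\|_H^2}$.

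First I would fix $t \in [0,T]$ and iterate \eqref{eq:Numerics} backwards through the grid points of $\theta \cap [0,t]$, using the semigroup property $e^{(t-r)A}e^{(r-r')A} = e^{(t-r')A}$ and linearity, to write $\O_t$ as a sum over subintervals $[\llcorner u\lrcorner_\theta$-blocks$]$ of terms of the form $e^{(t-\llcorner u\lrcorner_\theta)A}\chi_{\llcorner u\lrcorner_\theta}\,\tfrac{X_{\llcorner u\lrcorner_\theta,u}}{1+\|X_{\llcorner u\lrcorner_\theta,u}\|_H^2}$ evaluated at the right endpoint of each subinterval. Next, on each subinterval I would apply the mild It\^o formula to the smooth function $H_\beta \ni v \mapsto \tfrac{v}{1+\|v\|_H^2} \in H_\beta$ composed with the (linear, bounded from $H_\beta$ to $H_\gamma$ since $\gamma \le \tfrac12 + \beta$, hence $\gamma \le \beta$ is not needed — rather the smoothing of the semigroup) operator $e^{(t-\llcorner u\lrcorner_\theta)A}$; computing the first and second Fr\'echet derivatives of $v\mapsto v(1+\|v\|_H^2)^{-1}$ in the directions given by $B$ and using $\langle v, B(\cdot)\rangle_H = \langle \mathbb{B} v, (\cdot)\rangle_U$ produces exactly the drift integrand with the $(1+\|\cdot\|_H^2)^{-2}$ and $(1+\|\cdot\|_H^2)^{-3}$ terms and the diffusion integrand $\tfrac{B}{1+\|\cdot\|_H^2} - \tfrac{2v\langle v, B(\cdot)\rangle_H}{(1+\|\cdot\|_H^2)^2}$ displayed in the statement. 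The adaptedness of $\chi_{\llcorner u\lrcorner_\theta}$ and its $\f_{\llcorner u\lrcorner_\theta}$-measurability let it be pulled inside the stochastic and Lebesgue integrals as a predictable factor.

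The patching step then uses that the stochastic and Bochner integrals over $[0,t]$ decompose as sums of the corresponding integrals over the subintervals, and that on the subinterval containing $u$ we have $\llcorner u\lrcorner_\theta$ constant, so replacing the subinterval-dependent left endpoint by the function $u \mapsto \llcorner u\lrcorner_\theta$ is legitimate; the boundary contributions at grid points telescope against the $e^{(t-r)A}\O_r$ terms from \eqref{eq:Numerics}, leaving no residual terms because $\O_0 = 0$. Finally I would check the integrability/measurability hypotheses needed to invoke \cite[Definition~1 and the associated It\^o formula]{DaPratoJentzenRoeckner2012}: the integrands are continuous in $u$, adapted, and bounded (the taming denominators ensure $\|\tfrac{v}{1+\|v\|_H^2}\|_{H_\beta} \lesssim 1$ and similarly for the derivative terms up to factors of $\|X\|$ which have all moments), and $e^{(t-\llcorner u\lrcorner_\theta)A} \in L(H_\beta, H_\gamma)$ with operator norm integrable in $u$ since $\gamma - \beta < \tfrac12$ gives a nonintegrable-free singularity $|t-\llcorner u\lrcorner_\theta|^{-(\gamma-\beta)_+}$ that is Lebesgue integrable.

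The main obstacle I anticipate is the bookkeeping in the patching step: one must be careful that the mild It\^o formula is applied with the correct (subinterval-local) semigroup factor $e^{(t-\llcorner u\lrcorner_\theta)A}$ rather than $e^{(u-\llcorner u\lrcorner_\theta)A}$, and that the accumulated semigroup factors from iterating \eqref{eq:Numerics} combine correctly; a clean way to handle this is to prove the identity first for $t \in \theta$ by a finite induction on $\#(\theta\cap[0,t])$ and then extend to general $t$ by applying $e^{(t-\llcorner t\lrcorner_\theta)A}$ to the representation at $\llcorner t\lrcorner_\theta$ and running the mild It\^o formula once more on the final partial subinterval $[\llcorner t\lrcorner_\theta, t]$. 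The verification that the diffusion term's $H_\gamma$-valued stochastic integral is well defined (i.e., the integrand lies in the space of $\HS(U,H_\gamma)$-valued predictable processes with the right integrability) is routine given the taming bounds but should be stated explicitly.
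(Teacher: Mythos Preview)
Your approach is essentially the paper's: apply It\^o's formula to the taming map $\psi(v)=v/(1+\|v\|_H^2)$ evaluated at the increment process $X_{\llcorner u\lrcorner_\theta,\cdot}$ on each subinterval, compute the first and second Fr\'echet derivatives, pull the subinterval-constant factors $\chi_{\llcorner u\lrcorner_\theta}$ and $e^{(t-\llcorner u\lrcorner_\theta)A}$ through by linearity, and then iterate through the grid using the semigroup property and $\O_0=0$. One terminological correction: the paper invokes the \emph{standard} Hilbert-space It\^o formula (Brze\'zniak et al.~\cite[Theorem~2.4]{BrzezniakNeervenVeraarWeis2008}), not the mild It\^o formula of \cite{DaPratoJentzenRoeckner2012}, because the input $X_{s,\cdot}=\int_s^\cdot B\,dW_u$ is an ordinary It\^o process with no semigroup in the integrand; the mild-It\^o structure is the \emph{output} of the lemma, and the semigroup factor $e^{(t-\llcorner u\lrcorner_\theta)A}$ is simply a fixed bounded linear operator on each subinterval that you apply after doing It\^o in $H_\beta$.
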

\begin{proof}[Proof of Lemma~\ref{lemma:Ito_representation}]
	Throughout this proof 
	let
	$ \psi \colon H \to H $ be the function 
	which satisfies for every $ v \in H $ that
	$ \psi( v ) = \tfrac{v}{1+\|v\|_H^2} $
	and let
	$ \mathbb{U} \subseteq U $ be an orthonormal basis of $ U $.
	Note that for every
	$ u, v, z \in H $ it holds that
	\begin{equation}
	\begin{split}
	\psi'(z)(u)
	=
	\tfrac{u}{1+\|z\|_H^2} - \tfrac{ 2z\langle z, u \rangle_H }{(1+ \|z\|_H^2)^2}
	\end{split}
	\end{equation}
	and
	\begin{equation}
	\begin{split}
	\psi''(z)(u,v)
	=
	- \tfrac{ 2 [ u \langle z, v \rangle_H 
		+ v \langle z, u \rangle_H 
		+ z \langle u, v \rangle_H ] }
	{ ( 1 + \| z \|_H^2 )^2 }
	+
	\tfrac{ 8 z \langle z, u \rangle_H \langle z, v \rangle_H }
	{ ( 1 + \| z \|_H^2 )^3 }
	.
	\end{split}
	\end{equation}
	It\^o's formula 
	(see, e.g.,
	Brze\'zniak et al.\ \cite[Theorem~2.4]{BrzezniakNeervenVeraarWeis2008}
	with
	$ H = U $,
	$ E = H $, 
	$ F = H $,
	$ f = ( [0,T] \times H \ni (t, x) \mapsto \psi(x) \in H ) $,
	$ \Phi = ( [0,T] \times \Omega \ni (t,\omega) \mapsto B \in \HS(U, H) ) $
	in the notation of 
	Brze\'zniak et al.\ \cite[Theorem~2.4]{BrzezniakNeervenVeraarWeis2008})
	hence proves 
	that
	for every 
	$ s \in [0,T] $, $ t \in [s,T] $ 
	it holds that
	\begin{equation}
	\begin{split} 
	&
	[ \psi( X_{s,t} ) ]_{\P, \mathcal{B}(H_\beta)}
	=  
	\int_s^t 
	\Big(
	\tfrac{B  }{1+\| X_{s, u} \|_H^2}
	-
	\tfrac{2 X_{s, u} \langle X_{s, u}, B (\cdot) \rangle_H}
	{ ( 1 + \| X_{s, u} \|_H^2 )^2 } 
	\Big)
	\, 
	dW_u 
	\\
	&
	\quad
	+
	\Big[
	\int_s^t
	\Big(
	\sum\nolimits_{ { \bf u} \in \mathbb{U} }
	\tfrac{ 4 X_{s,u} | \langle X_{s,u}, B { \bf u}  \rangle_H|^2}
	{(1 + \| X_{s,u} \|_H^2)^3}
	-
	\tfrac{
		2 B  {\bf u}  \langle X_{s,u}, 
		B  { \bf u} \rangle_H
		+ X_{s,u} \| B { \bf u}  \|_H^2 
	}{
		( 1 + \| X_{s,u} \|_H^2 )^2
	}
	\Big)
	\, 
	du
	\Big]_{\P, \mathcal{B}(H_\beta)}
	\\
	&
	= 
	\int_s^t 
	\Big(
	\tfrac{B  }{1+\| X_{s, u} \|_H^2}
	-
	\tfrac{2 X_{s,u} \langle X_{s,u}, B (\cdot) \rangle_H}
	{ ( 1 + \| X_{s,u} \|_H^2 )^2 }
	\Big) 
	\, 
	dW_u
	\\
	&
	\quad
	+
	\Big[
	\int_s^t 
	\Big(
	\tfrac{ 4 
		X_{s, u}
		\| \mathbb{B}
		X_{s,u}  \|_U^2
	}
	{(1 + \| X_{s,u} \|_H^2)^3}
	-
	\tfrac{
		2 B  \mathbb{B} X_{s,u} 
		+ \| B \|_{ \HS(U,H) }^2 X_{s,u}  
	}{
		( 1 + \| X_{s,u} \|_H^2 )^2
	}
	\Big)
	\, 
	du
	\Big]_{\P, \mathcal{B}(H_\beta)}
	.
	\end{split}
	\end{equation}
	Therefore, we obtain for every $ t \in [0,T] $
	that
	\begin{equation}
	\begin{split}
	\label{eq:afterIto}
	&
	[ \O_t ]_{\P, \mathcal{B}(H_\gamma)} 
	= 
	 [
	e^{(t-\llcorner t \lrcorner_\theta )A}
	 (
	\O_{\llcorner t \lrcorner_\theta}
	+ 
	\chi_{\llcorner t \lrcorner_\theta}
	\psi (
	X_{\llcorner t \lrcorner_\theta, t}
	)  
	 )
	]_{\P, \mathcal{B}(H_\gamma)} 
	\\
	&
	=  
	[
	e^{(t-\llcorner t \lrcorner_\theta )A} 
	\O_{\llcorner t \lrcorner_\theta} 
 	]_{\P, \mathcal{B}(H_\gamma)} 
	+
	\int_{\llcorner t \lrcorner_\theta}^t
	\chi_{ \llcorner t \lrcorner_{ \theta }}  
	e^{(t- \llcorner t \lrcorner_\theta )A}
	\Big( 
	\tfrac{ B }
	{1+\| X_{{ \llcorner t \lrcorner_\theta }, u} \|_H^2}
	-
	\tfrac{2 X_{{ \llcorner t \lrcorner_\theta }, u} 
		\langle X_{{\llcorner t \lrcorner_\theta}, u}, 
		B (\cdot) \rangle_H}
	{ ( 1 + \| X_{{\llcorner t \lrcorner_\theta}, u} \|_H^2 )^2 } 
	\Big)
	\, 
	dW_u 
	\\
	& 
	\quad 
	+ 
	\Big[
	\int_{\llcorner t \lrcorner_\theta}^t 
	\chi_{ \llcorner t \lrcorner_{ \theta }}  
	e^{(t-\llcorner t \lrcorner_\theta)A}
	\Big(
	\tfrac{ 4 
		X_{\llcorner t \lrcorner_\theta, u}
		\| \mathbb{B}
		X_{\llcorner t \lrcorner_\theta, u}  \|_U^2
	}
	{(1 + \| X_{\llcorner t \lrcorner_\theta, u} \|_H^2)^3}
	-
	\tfrac{
		2 B  
		\mathbb{B}
		X_{ \llcorner t \lrcorner_\theta, u} 
		+ 
		\| B  \|_{ \HS(U,H) }^2
		X_{ \llcorner t \lrcorner_\theta, u}  
	}{
		( 1 + \| X_{\llcorner t \lrcorner_\theta, u} \|_H^2 )^2
	}
	\Big)
	\, 
	du  
	\Big ]_{\P, \mathcal{B}( H_\gamma ) } 
	.
	\end{split}
	\end{equation}
	This  
	completes the proof
	of Lemma~\ref{lemma:Ito_representation}.
\end{proof}
\begin{lemma}
	\label{lemma:some_helping_estimates}
	Assume Setting~\ref{setting:Strengthened_strong_a_priori_moment_bounds},
	let 
	$ p \in [2,\infty) $,
	$ \rho \in [\beta, \nicefrac{1}{2} + \beta ) $,
	$ \eta \in [\beta, 1+ \beta) $, 
	and
	for every $ s \in [0,T] $
	let
		$ X_{s, (\cdot) }(\cdot) 
	=
	( X_{s,t}(\omega) )_{ (t,\omega)\in[s,T]\times\Omega }
	\colon 
	[s,T] \times \Omega
	\to H_\beta $ 
	be an $ ( \f_t )_{ t \in [s,T]} $-adapted 
	stochastic process
	with continuous sample paths 
	which satisfies for every
	$ t \in [s, T] $
	that
	$
	[
	X_{s, t}
	]_{\P, \mathcal{B}(H_\beta)}
	= \int_s^t B 
	\, 
	dW_u
	$.
	Then 
	it holds 
	for every
	$ s \in [0,T] $, $ t \in [s,T] $ 
	that
	\begin{equation}
		\begin{split} 
		\label{eq:Estimate1}
			& 
			\Big\|
			\int_s^t 
			\chi_{ \llcorner u \lrcorner_\theta } 
			e^{(t- \llcorner u \lrcorner_\theta )A} 
			\,
			\tfrac{B }{1+\| X_{{ \llcorner u \lrcorner_\theta }, u} \|_H^2}  
			\, 
			dW_u
			\Big\|_{L^p(\P; H_\rho )}
			\leq  
			\| B \|_{\HS(U,H_\beta)}
			\tfrac{ p ( t - s )^{ \nicefrac{1}{2} + \beta - \rho } 	
		 }
			{
				\sqrt{ 2 ( 1 + 2 \beta - 2 \rho ) }
			}
			,
		\end{split}
	\end{equation}
	\begin{equation}
		\begin{split}
		\label{eq:Estimate2}
			&
			\Big\|
			\int_s^t 
			\chi_{ \llcorner u \lrcorner_\theta } 
			e^{(t- \llcorner u \lrcorner_\theta )A} 
			\,
			\tfrac{2 X_{{ \llcorner u \lrcorner_\theta }, u} \langle X_{{ \llcorner u \lrcorner_\theta }, u}, B 
				\, 
				dW_u  \rangle_H}
			{ ( 1 + \| X_{{ \llcorner u \lrcorner_\theta }, u} \|_H^2 )^2 } 
			\Big \|_{L^p(\P; H_\rho)}
			\\
			&
			\leq 
			2 \sqrt{2} p^3   
			\| B \|_{ \HS(U, H_\beta) }^3
			| \sup\nolimits_{ h \in \H } \values_h |^{-2\beta}
			\tfrac{  
				( t - s )^{\nicefrac{1}{2} + \beta -  \rho} 
			}
		{ \sqrt{  1 + 2 \beta - 2 \rho  } }  
			|\theta|_T,
		\end{split}
	\end{equation}
	and
	\begin{equation}
		\begin{split}
		\label{eq:Estimate3}
			&
			\Big\|
			\int_s^t
			\chi_{ \llcorner u \lrcorner_\theta } 
			e^{(t- \llcorner u \lrcorner_\theta )A}
			\Big( 
			\tfrac{ 4  
				\|  \mathbb{B}
				X_{\llcorner u \lrcorner_\theta, u}  \|_U^2
		X_{\llcorner u \lrcorner_\theta, u}	
		}
			{(1 + \| X_{\llcorner u \lrcorner_\theta, u} \|_H^2)^3}
			-
			\tfrac{
				2 B  
				\mathbb{B} 
				X_{\llcorner u \lrcorner_\theta, u}
				+ 
				\| B  \|_{\HS(U,H)}^2 
				X_{\llcorner u \lrcorner_\theta, u}
			}{
			( 1 + \| X_{\llcorner u \lrcorner_\theta, u} \|_H^2 )^2
		}
		\Big) 
		\, 
		du
		\Big\|_{\L^p(\P; H_\eta)}
		\\
		&
		\leq
		2
	\sqrt{2}
	p  
	\| B \|_{ \HS(U,H_\beta) }^3 
	| \sup\nolimits_{h \in \H} \values_h |^{-2\beta} 
	\tfrac{  
		(t- s )^{ 1 + \beta - \eta } 
 }{ 1 + \beta - \eta } 
	[ | \theta |_T ]^{ \nicefrac{1}{2} }
		.
	\end{split}
\end{equation}
\end{lemma}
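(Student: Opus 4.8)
The plan is to reduce all three bounds to a common toolkit and then handle the three integrals in turn. The ingredients I would set up are: \emph{(a)} since $\llcorner u\lrcorner_\theta\le u$, both $\chi_{\llcorner u\lrcorner_\theta}$ and the (deterministic) operator $e^{(t-\llcorner u\lrcorner_\theta)A}$ are $\f_{\llcorner u\lrcorner_\theta}$-measurable and $u\mapsto\llcorner u\lrcorner_\theta$ is a step function on $\theta$, so the integrands in \eqref{eq:Estimate1}--\eqref{eq:Estimate3} are genuine predictable processes; \emph{(b)} the Burkholder--Davis--Gundy inequality in the form $\|\int_s^t\Phi_u\,dW_u\|_{L^q(\P;H_r)}\le\tfrac{q}{\sqrt2}\big(\int_s^t\|\Phi_u\|_{L^q(\P;\HS(U,H_r))}^2\,du\big)^{1/2}$, valid for $q\in[2,\infty)$ and predictable $\Phi$ (from the standard BDG estimate, Minkowski's inequality for integrals, and $\sqrt{q(q-1)/2}\le q/\sqrt2$), together with its Bochner analogue $\|\int_s^t g_u\,du\|_{L^q(\P;H_r)}\le\int_s^t\|g_u\|_{L^q(\P;H_r)}\,du$; \emph{(c)} the resulting moment bound $\|X_{w,v}\|_{L^q(\P;H_r)}\le\tfrac{q}{\sqrt2}\|B\|_{\HS(U,H_r)}(v-w)^{1/2}$ for $r\in\{0,\beta\}$, $0\le w\le v\le T$; \emph{(d)} the smoothing estimate $\|e^{wA}\|_{L(H_\beta,H_\nu)}\le w^{\beta-\nu}$ for $w>0$, $\nu\in[\beta,\beta+1]$ (a consequence of $\sup_{h\in\H}\values_h<0$ and $\sup_{x\ge0}x^ae^{-x}\le1$ for $a\in[0,1]$), the embedding bound $\|B\|_{\HS(U,H)}\le|\sup_{h\in\H}\values_h|^{-\beta}\|B\|_{\HS(U,H_\beta)}$, and $\|\mathbb B\|_{L(H,U)}=\|B\|_{L(U,H)}\le\|B\|_{\HS(U,H)}$; and \emph{(e)} the elementary facts $\tfrac{x^2}{(1+x^2)^2}\le\tfrac14$ and $\tfrac1{1+x^2}\le1$ for $x\ge0$, the bound $v-w\le|\theta|_T$ whenever $w=\llcorner v\lrcorner_\theta$, the inequalities $(t-\llcorner u\lrcorner_\theta)^{2\beta-2\rho}\le(t-u)^{2\beta-2\rho}$ and $(t-\llcorner u\lrcorner_\theta)^{\beta-\eta}\le(t-u)^{\beta-\eta}$ (the exponents being non-positive), and $\int_s^t(t-u)^{2\beta-2\rho}\,du=\tfrac{(t-s)^{1+2\beta-2\rho}}{1+2\beta-2\rho}$, $\int_s^t(t-u)^{\beta-\eta}\,du=\tfrac{(t-s)^{1+\beta-\eta}}{1+\beta-\eta}$, both finite since $\rho<\tfrac12+\beta$ and $\eta<1+\beta$.

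For \eqref{eq:Estimate1} I would bound $\chi\le1$ and $\tfrac1{1+\|X_{\llcorner u\lrcorner_\theta,u}\|_H^2}\le1$ pointwise and use (d); then the integrand has $\HS(U,H_\rho)$-norm at most the deterministic quantity $(t-u)^{\beta-\rho}\|B\|_{\HS(U,H_\beta)}$, and inserting this into (b) and the integral in (e) gives \eqref{eq:Estimate1} with constant $\tfrac{p}{\sqrt2}(1+2\beta-2\rho)^{-1/2}$. For \eqref{eq:Estimate2} the key point is that, for each fixed $u$, the integrand is the rank-one operator $v\mapsto\langle\mathbb B X_{\llcorner u\lrcorner_\theta,u},v\rangle_U\,w_u$ with $w_u=\tfrac{2\chi_{\llcorner u\lrcorner_\theta}e^{(t-\llcorner u\lrcorner_\theta)A}X_{\llcorner u\lrcorner_\theta,u}}{(1+\|X_{\llcorner u\lrcorner_\theta,u}\|_H^2)^2}$ (using $\langle X,Bv\rangle_H=\langle\mathbb B X,v\rangle_U$), so its $\HS(U,H_\rho)$-norm equals $\|\mathbb B X_{\llcorner u\lrcorner_\theta,u}\|_U\,\|w_u\|_{H_\rho}$. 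I would bound $\chi\le1$, $(1+\|X\|_H^2)^{-2}\le1$, $\|\mathbb B X\|_U\le|\sup_{h\in\H}\values_h|^{-\beta}\|B\|_{\HS(U,H_\beta)}\|X\|_H$ and, via (d), $\|e^{(t-\llcorner u\lrcorner_\theta)A}X\|_{H_\rho}\le(t-u)^{\beta-\rho}\|X\|_{H_\beta}$; then, passing to $L^p(\P)$-norms and using Cauchy--Schwarz to split $\|X\|_H\|X\|_{H_\beta}$ into the product of the $L^{2p}(\P)$-norms of $X_{\llcorner u\lrcorner_\theta,u}$ in $H$ and in $H_\beta$, each controlled by (c) (picking up one extra factor $|\sup_{h\in\H}\values_h|^{-\beta}$ from the $H$-norm and $\sqrt2\,p\,\|B\|_{\HS(U,H_\beta)}|\theta|_T^{1/2}$ otherwise), I get that the $L^p(\P)$-norm of the integrand's Hilbert--Schmidt norm is at most $4p^2|\sup_{h\in\H}\values_h|^{-2\beta}\|B\|_{\HS(U,H_\beta)}^3(t-u)^{\beta-\rho}|\theta|_T$; one more application of (b) and (e) then produces \eqref{eq:Estimate2}, the three powers of $p$ coming from the two inner factors (in $L^{2p}$) and the outer stochastic integral.

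For \eqref{eq:Estimate3} I would apply the Bochner Minkowski bound from (b), then $\chi\le1$, then $\|e^{(t-\llcorner u\lrcorner_\theta)A}F_u\|_{H_\eta}\le(t-u)^{\beta-\eta}\|F_u\|_{H_\beta}$, where $F_u$ denotes the bracketed $H_\beta$-valued integrand. Estimating the three summands of $\|F_u\|_{H_\beta}$ separately --- using $\tfrac{\|X\|_H^2}{(1+\|X\|_H^2)^2}\le\tfrac14$, $(1+\|X\|_H^2)^{-2}\le1$, $\|\mathbb B X\|_U\le|\sup_{h\in\H}\values_h|^{-\beta}\|B\|_{\HS(U,H_\beta)}\|X\|_H$, $\|B\mathbb B\,X\|_{H_\beta}\le|\sup_{h\in\H}\values_h|^{-\beta}\|B\|_{\HS(U,H_\beta)}^2\|X\|_H$, and $\|B\|_{\HS(U,H)}^2\le|\sup_{h\in\H}\values_h|^{-2\beta}\|B\|_{\HS(U,H_\beta)}^2$ --- yields $\|F_u\|_{H_\beta}\le2|\sup_{h\in\H}\values_h|^{-2\beta}\|B\|_{\HS(U,H_\beta)}^2\|X_{\llcorner u\lrcorner_\theta,u}\|_{H_\beta}+2|\sup_{h\in\H}\values_h|^{-\beta}\|B\|_{\HS(U,H_\beta)}^2\|X_{\llcorner u\lrcorner_\theta,u}\|_H$. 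Taking $L^p(\P)$-norms and invoking (c) (with $\|B\|_{\HS(U,H)}\le|\sup_{h\in\H}\values_h|^{-\beta}\|B\|_{\HS(U,H_\beta)}$ for the $H$-summand) bounds this by $2\sqrt2\,p\,|\sup_{h\in\H}\values_h|^{-2\beta}\|B\|_{\HS(U,H_\beta)}^3|\theta|_T^{1/2}$, and integrating $(t-u)^{\beta-\eta}$ over $[s,t]$ gives \eqref{eq:Estimate3}.

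The routine parts are the smoothing/BDG/Minkowski manipulations and the deterministic integrals; the one step needing real care is the constant bookkeeping in \eqref{eq:Estimate2}. There one must recognise the integrand as a rank-one operator so as to compute its Hilbert--Schmidt norm \emph{exactly} (rather than overestimate it), and --- this is exactly where the asserted $p^3$ comes from --- estimate each of the two surviving copies of $X_{\llcorner u\lrcorner_\theta,u}$ in $L^{2p}(\P)$, not $L^p(\P)$, so that together with the outer stochastic integral precisely three powers of $p$ appear. A secondary subtlety common to all three estimates is that one must exploit the $H_\beta$-regularity of $X_{\llcorner u\lrcorner_\theta,u}$ and invoke (d) only at order $\rho-\beta<\tfrac12$ (resp.\ $\eta-\beta<1$): using a smoothing estimate at the full order $\rho$ (resp.\ $\eta$) would create a non-integrable time singularity as soon as $\rho\ge\tfrac12$ (resp.\ $\eta\ge1$).
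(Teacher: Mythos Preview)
Your proposal is correct and follows essentially the same approach as the paper's proof: all three estimates are handled via the Burkholder--Davis--Gundy inequality (or Bochner--Minkowski for \eqref{eq:Estimate3}), the smoothing bound $\|(-A)^{\nu-\beta}e^{wA}\|_{L(H)}\le w^{\beta-\nu}$, the H\"older/Cauchy--Schwarz splitting into $L^{2p}$-moments for \eqref{eq:Estimate2}, and the elementary bounds $(t-\llcorner u\lrcorner_\theta)^{-a}\le(t-u)^{-a}$ and $u-\llcorner u\lrcorner_\theta\le|\theta|_T$. Your rank-one description of the integrand in \eqref{eq:Estimate2} and your term-by-term splitting of $\|F_u\|_{H_\beta}$ in \eqref{eq:Estimate3} are slightly cleaner repackagings of the same computations the paper carries out, and your constant tracking matches the paper's exactly.
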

\begin{proof}[Proof of Lemma~\ref{lemma:some_helping_estimates}]
	Note that 
	the
	Burkholder-Davis-Gundy type inequality in Lemma~7.7 in Da Prato \& Zabczyk~\cite{dz92}
	shows for every 	$ s \in [0,T] $, $ t \in [s,T] $ 
	that
	\begin{equation}
		\begin{split} 
			\label{eq:some111}
			&
			\Big\|
			\int_s^t 
			\chi_{ \llcorner u \lrcorner_\theta } 
			e^{(t- \llcorner u \lrcorner_\theta)A} 
			\,
			\tfrac{B }{1+\| X_{{ \llcorner u \lrcorner_\theta }, u} \|_H^2}  
			\, 
			dW_u
			\Big\|_{L^p(\P; H_\rho)}^2
			\\
			&
			\leq
			\tfrac{p(p-1)}{2}
			\int_s^t  
			\Big\|  
			e^{(t- \llcorner u \lrcorner_\theta )A}
			\tfrac{
				B 
			}{1+\| X_{ \llcorner u \lrcorner_\theta, u} \|_H^2}  
			\Big\|_{\L^p(\P; \HS(U, H_\rho))}^2
			\, 
			du
			\\
			&
			\leq
			\tfrac{p^2}{2}
			\int_s^t  
			\|  
			(-A)^{\rho-\beta}
			e^{(t- \llcorner u \lrcorner_\theta )A}
			\|_{L(H)}^2
			\Big\|
			\tfrac{
				B 
			}{1+\| X_{\llcorner u \lrcorner_\theta, u} \|_H^2}  
			\Big\|_{\L^p(\P; \HS(U, H_\beta))}^2
			\, 
			du
			\\
			&
			\leq
			\tfrac{p^2}{2}  
			\| B \|_{ \HS(U, H_\beta) }^2 
						\int_s^t 
						(t - \llcorner u \lrcorner_\theta)^{2 \beta - 2\rho} 
						\, 
						du 
			.
			\end{split}
			\end{equation}
			In addition, observe for every
				$ s \in [0,T] $, $ t \in [s,T] $ 
			 that
			\begin{equation} 
			\begin{split} 
				\label{eq:some112}
			&
			\int_s^t 
			(t - \llcorner u \lrcorner_\theta )^{2 \beta - 2\rho} 
			\, 
			du
			\leq 
			\int_s^t 
			(t - u)^{2 \beta - 2\rho} 
			\, 
			du 
			\leq 			
			\tfrac{ (t-s)^{1+2\beta-2\rho} }{ 1 + 2 \beta - 2 \rho } 		 
			.
		\end{split}
	\end{equation}
	Combining this and~\eqref{eq:some111} 
	establishes~\eqref{eq:Estimate1}.
	Furthermore, note that
	the
	Burkholder-Davis-Gundy type inequality in Lemma~7.7 in Da Prato \& Zabczyk~\cite{dz92} 
	proves that for every
		$ s \in [0,T] $, $ t \in [s,T] $ 
	it holds that
	\begin{equation}
		\begin{split}
			&
			\Big\|
			\int_s^t 
			\chi_{ \llcorner u \lrcorner_\theta } 
			e^{(t- \llcorner u \lrcorner_\theta )A} 
			\,
			\tfrac{2 X_{{ \llcorner u \lrcorner_\theta }, u} \langle X_{{ \llcorner u \lrcorner_\theta }, u}, B(\cdot) \rangle_H}
			{ ( 1 + \| X_{{ \llcorner u \lrcorner_\theta }, u} \|_H^2 )^2 }  
			\, 
			dW_u
			\Big \|_{L^p(\P; H_\rho)}^2
			\\
			&
			\leq
			\tfrac{p(p-1)}{2}
			\int_s^t
			\Big\|
			e^{(t- \llcorner u \lrcorner_\theta )A}
			\,
			\tfrac{2 X_{\llcorner u \lrcorner_\theta, u} \langle X_{\llcorner u \lrcorner_\theta, u}, B(\cdot) \rangle_H}
			{ ( 1 + \| X_{\llcorner u \lrcorner_\theta, u} \|_H^2 )^2 }  
			\Big\|_{\L^p(\P; \HS(U,H_\rho) )}^2
			\,
			du
						\\
						&
						\leq
						2p^2
						\int_s^t
						\big\|
						e^{(t- \llcorner u \lrcorner_\theta)A}
						\,
						 X_{\llcorner u \lrcorner_\theta, u} \langle X_{\llcorner u \lrcorner_\theta, u}, B(\cdot) \rangle_H   
						\big\|_{\L^p(\P; \HS(U,H_\rho) )}^2
						\,
						du
						\\
						&
						\leq
						2p^2
						\int_s^t
						 \big\|
					 \|
						(-A)^{\rho - \beta}
						e^{(t- \llcorner u \lrcorner_\theta )A}
						\|_{L(H)}
						\| 
						(-A)^\beta
						 X_{\llcorner u \lrcorner_\theta, u} 
						 \|_H
						 \| 
						 \langle X_{\llcorner u \lrcorner_\theta, u}, B(\cdot) \rangle_H
						 \|_{ \HS(U, \R) }   
						\big\|_{\L^p(\P; \R )}^2
						\,
						du 
			.
		\end{split}
	\end{equation}
	The H\"older inequality, the
	Burkholder-Davis-Gundy type inequality in Lemma~7.7 in Da Prato \& Zabczyk~\cite{dz92}, 
	and
	the fact that
	$ \| B \|_{ \HS(U,H)}
	\leq | \sup_{ h \in \H } \values_h |^{-\beta} \| B \|_{\HS(U,H_\beta)} $
	therefore
	ensure that for every
		$ s \in [0,T] $, $ t \in [s,T] $ 
	it holds that
	\begin{equation} 
		\begin{split}
			\label{eq:221}
			&
			\Big\|
			\int_s^t 
			\chi_{ \llcorner u \lrcorner_\theta } 
			e^{(t- \llcorner u \lrcorner_\theta )A} 
			\,
			\tfrac{2 X_{{ \llcorner u \lrcorner_\theta }, u} \langle X_{{ \llcorner u \lrcorner_\theta }, u}, B  
				(\cdot) \rangle_H}
			{ ( 1 + \| X_{{ \llcorner u \lrcorner_\theta }, u} \|_H^2 )^2 }  
			\, 
			dW_u 
			\Big\|_{L^p(\P; H_\rho)}^2
			\\
			&
			\leq
			2 p^2
			\|  B  \|_{ \HS( U, H ) }^2
			\int_s^t
 \|
(-A)^{\rho - \beta}
e^{(t- \llcorner u \lrcorner_\theta )A}
\|_{L(H)}^2
			\| 
			X_{\llcorner u \lrcorner_\theta, u} 
			\|_{\L^{2p}(\P;H_\beta)}^2
			\| 
			X_{\llcorner u \lrcorner_\theta, u} 
			\|_{\L^{2p}(\P;H)}^2
			\,
			du
			\\
			&
			\leq
			8 p^6 
			\|  B  \|_{ \HS(U,H) }^2
			\int_s^t
 \|
(-A)^{\rho - \beta}
e^{(t- \llcorner u \lrcorner_\theta )A}
\|_{L(H)}^2
			\bigg[ 
			\smallint_ { \llcorner u \lrcorner_\theta }^u
			\|
			B 
			\|_{\HS(U,H_\beta)}^2
			\,
			dw
			\bigg]
			\bigg[ 
			\smallint_ { \llcorner u \lrcorner_\theta }^u
			\|
			B 
			\|_{\HS(U,H)}^2
			\,
			dw
			\bigg]
			\,
			du
			\\
			&
			\leq
			8p^6
			| \sup\nolimits_{ h \in \H } \values_h |^{-4\beta} 
			\|  B  \|_{ \HS(U,H_\beta) }^6
			\int_s^t
 \|
(-A)^{\rho - \beta}
e^{(t- \llcorner u \lrcorner_\theta )A}
\|_{L(H)}^2
			(u - \llcorner u \lrcorner_\theta )^2 
			\,
			du
			\\
			&
			\leq
		8p^6 
	| \sup\nolimits_{ h \in \H } \values_h |^{-4\beta} 
	\|  B  \|_{ \HS(U,H_\beta) }^6  
	\int_s^t
				( t - \llcorner u \lrcorner_\theta )^{2 \beta - 2 \rho} 
	\,
	du 
		[ | \theta |_T ]^2 
			.
		\end{split}
	\end{equation}
	Combining this and~\eqref{eq:some112} 
	assures that~\eqref{eq:Estimate2} holds.
	In the next step we observe that
    for every 
    	$ s \in [0,T] $, $ t \in [s,T] $ 
    it holds that
	\begin{equation}
		\begin{split}
			&
			\Big\|
			\int_s^t
			\chi_{\llcorner u \lrcorner_\theta} 
			e^{(t- \llcorner u \lrcorner_\theta)A}
			\Big( 
			\tfrac{ 4  
				\|  \mathbb{B}
				X_{\llcorner u \lrcorner_\theta, u}  \|_U^2
			X_{\llcorner u \lrcorner_\theta, u}
		}
			{(1 + \| X_{\llcorner u \lrcorner_\theta, u} \|_H^2)^3}
			-
			\tfrac{
				2 B   
				\mathbb{B} 
				X_{\llcorner u \lrcorner_\theta, u}
				+ 
				\| B \|_{\HS(U,H)}^2
				X_{\llcorner u \lrcorner_\theta, u} 
			}{
			( 1 + \| X_{\llcorner u \lrcorner_\theta, u} \|_H^2 )^2
		}
		\Big)  
		\, 
		du
		\Big\|_{\L^p(\P; H_\eta)}
		\\
		&
		\leq
		\int_s^t
		\Big\| 
		e^{(t-\llcorner u \lrcorner_\theta)A}
		\Big( 
		\tfrac{ 4  
			\|  \mathbb{B}
			X_{\llcorner u \lrcorner_\theta, u}  \|_U^2
		X_{\llcorner u \lrcorner_\theta, u}
	}
		{(1 + \| X_{\llcorner u \lrcorner_\theta, u} \|_H^2)^3}
		-
		\tfrac{
			2 B
			\mathbb{B}
			X_{\llcorner u \lrcorner_\theta, u}
			+ 
			\| B \|_{\HS(U,H)}^2 
			X_{\llcorner u \lrcorner_\theta, u}
		}{
		( 1 + \| X_{\llcorner u \lrcorner_\theta, u} \|_H^2 )^2
	}
	\Big) 
	\Big\|_{\L^p(\P; H_\eta)}
	\, 
	du
	\\
	&
	\leq
	\int_s^t
	\| 
	( - A )^{\eta - \beta}
	e^{(t-\llcorner u \lrcorner_\theta)A}
	\|_{L(H)}
	\Big\|  
	\tfrac{ 4  
		\|  \mathbb{B}
		X_{\llcorner u \lrcorner_\theta, u}  \|_U^2
	X_{\llcorner u \lrcorner_\theta, u}
}
	{(1 + \| X_{\llcorner u \lrcorner_\theta, u} \|_H^2)^3}
	-
	\tfrac{
		2 B
		\mathbb{B}
		X_{\llcorner u \lrcorner_\theta, u}
		+ 
		\| B \|_{\HS(U,H)}^2 
		X_{\llcorner u \lrcorner_\theta, u}
	}{
		( 1 + \| X_{\llcorner u \lrcorner_\theta, u} \|_H^2 )^2
	}  
	\Big\|_{\L^p(\P; H_\beta)}
	\, 
	du
	\\
	&
	\leq 
	\int_s^t
	\| 
	( - A )^{\eta - \beta}
	e^{(t-\llcorner u \lrcorner_\theta)A}
	\|_{L(H)}
	\\
	&
	\quad
	\cdot
	\Big\|  
	\Big( 
	\tfrac{ 4  
		\| \mathbb{B} \|_{L(H,U)}^2
		\| X_{\llcorner u \lrcorner_\theta, u}  \|_{H}^2}
	{(1 + \| X_{\llcorner u \lrcorner_\theta, u} \|_H^2)^3}
	+
	\tfrac{
		2
		\| B \mathbb{B} \|_{ L(H_\beta, H_\beta)} 
			+
		 \| B \|_{\HS(U,H)}^2
	}{
	( 1 + \| X_{\llcorner u \lrcorner_\theta, u} \|_H^2 )^2
}
\Big)
\| X_{\llcorner u \lrcorner_\theta, u} \|_{ H_\beta }
\Big\|_{\L^p(\P; \R)}
\, 
du
\\
&
\leq 
\int_s^t
\| 
( - A )^{\eta - \beta}
e^{(t-\llcorner u \lrcorner_\theta)A}
\|_{L(H)}
\\
&
\quad
\cdot 
\big\|  
\big(   
\| B \|_{\HS(U,H)}^2  
+ 
2
\| B \|_{ L(U, H_\beta) }
	\| \mathbb{B} \|_{ L(H_\beta, U)}
	+
	\| B \|_{\HS(U,H)}^2
\big)
\| X_{\llcorner u \lrcorner_\theta, u} \|_{H_\beta}
\big\|_{\L^p(\P; \R)}
\, 
du
.
\end{split}
\end{equation}
This and the fact that
$ \| \mathbb{B} \|_{L(H_\beta, U)}
\leq 
| \sup_{ h \in \H } \values_h |^{-\beta} 
\| \mathbb{B} \|_{L(H, U) } $
demonstrate that
for every 
	$ s \in [0,T] $, $ t \in [s,T] $
it holds that
\begin{equation}
\begin{split}
&
\Big\|
\int_s^t
\chi_{\llcorner u \lrcorner_\theta} 
e^{(t-\llcorner u \lrcorner_\theta)A}
\Big( 
\tfrac{ 4  
	\|  \mathbb{B}
	X_{ \llcorner u \lrcorner_\theta, u}  \|_U^2
	X_{ \llcorner u \lrcorner_\theta, u}
}
{(1 + \| X_{ \llcorner u \lrcorner_\theta, u} \|_H^2)^3}
-
\tfrac{
	2 B   
	\mathbb{B}
	X_{ \llcorner u \lrcorner_\theta, u}
	+ 
	\| B \|_{\HS(U,H)}^2
	X_{ \llcorner u \lrcorner_\theta, u} 
}{
	( 1 + \| X_{\llcorner u \lrcorner_\theta, u} \|_H^2 )^2
}
\Big)  
\, 
du
\Big\|_{\L^p(\P; H_\eta)}
\\
&
\leq 
2
\int_s^t
\| 
( - A )^{\eta - \beta}
e^{(t-\llcorner u \lrcorner_\theta)A}
\|_{L(H)}
\\
&
\quad
\cdot
\big(
\| B \|_{ \HS(U,H) }^2  
+
| \sup\nolimits_{h \in \H} \values_h |^{-\beta}
\| B \|_{ \HS(U,H_\beta) }
\| \mathbb{B} \|_{ L(H, U) }
\big)
\| X_{\llcorner u \lrcorner_\theta, u} \|_{\L^{ p}(\P;H_\beta)} 
\, 
du
\\
&
\leq
2
\int_s^t
\| B \|_{ \HS(U,H) }
\| 
( - A )^{\eta - \beta}
e^{(t-\llcorner u \lrcorner_\theta)A}
\|_{L(H)}
\\
&
\quad
\cdot
\big(
 \| B \|_{ \HS(U,H) }  
+
| \sup\nolimits_{h \in \H} \values_h |^{-\beta}
\| B \|_{ \HS(U,H_\beta) } 
\big)
\| X_{ \llcorner u \lrcorner_\theta, u} \|_{\L^{ p}(\P;H_\beta)} 
\, 
du
.
\end{split}
\end{equation}
The
Burkholder-Davis-Gundy type inequality in Lemma~7.7 in Da Prato \& Zabczyk~\cite{dz92}
and 
the fact that
$ \| B \|_{ \HS(U,H)}
\leq | \sup_{ h \in \H } \values_h |^{-\beta} \| B \|_{\HS(U,H_\beta)} $
hence
establish for every
	$ s \in [0,T] $, $ t \in [s,T] $ 
 that
\begin{equation}
	\begin{split}
	\label{eq:intermediate}
		&
		\Big\|
		\int_s^t
		\chi_{\llcorner u \lrcorner_\theta} 
		e^{(t-\llcorner u \lrcorner_\theta)A}
		\Big( 
		\tfrac{ 4  
			\|  \mathbb{B}
			X_{\llcorner u \lrcorner_\theta, u}  \|_U^2
	X_{\llcorner u \lrcorner_\theta, u}	
	}
		{(1 + \| X_{\llcorner u \lrcorner_\theta, u} \|_H^2)^3}
		-
		\tfrac{
			2 B   
			\mathbb{B} 
			X_{\llcorner u \lrcorner_\theta, u}
			+ 
			\| B \|_{\HS(U,H)}^2 
	X_{\llcorner u \lrcorner_\theta, u}	
	}{
		( 1 + \| X_{\llcorner u \lrcorner_\theta, u} \|_H^2 )^2
	}
	\Big) 
	\, 
	du
	\Big\|_{\L^p(\P; H_\eta )}
	\\
	&
	\leq
	2 
	\| B \|_{ \HS(U,H) }
	\big(
	\| B \|_{ \HS(U,H) } 
	+
	| \sup\nolimits_{h \in \H} \values_h |^{-\beta}
	\| B \|_{ \HS(U,H_\beta) } 
	\big)
	\\
	&
	\quad
	\cdot
	\int_s^t   
\| 
( - A )^{\eta - \beta}
e^{(t-\llcorner u \lrcorner_\theta)A}
\|_{L(H)}
	\Big(
	\tfrac{p (p-1)}{2}
	\smallint_{\llcorner u \lrcorner_\theta}^u
	\|
	B  \|_{ \HS(U,H_\beta) }^2
	\, ds 
	\Big)^{\nicefrac{1}{2}}
	\, 
	du
	\\
	&
	\leq
	2
	\sqrt{2}
	p 
\| B \|_{ \HS(U,H_\beta) }^3 
| \sup\nolimits_{h \in \H} \values_h |^{-2\beta} 
	\int_s^t   
\| 
( - A )^{\eta - \beta}
e^{(t-\llcorner u \lrcorner_\theta)A}
\|_{L(H)}
	( u - \llcorner u \lrcorner_\theta )^{ \nicefrac{1}{2} } 
	\, 
	du
	\\
	&
	\leq
2
\sqrt{2}
p  
\| B \|_{ \HS(U,H_\beta) }^3 
| \sup\nolimits_{h \in \H} \values_h |^{-2\beta} 
[ | \theta |_T ]^{ \nicefrac{1}{2} } 
\int_s^t
	(t-\llcorner u \lrcorner_\theta)^{ \beta - \eta }   
\, 
du	 
	.
	\end{split} 
	\end{equation}
	Moreover, note that for every
		$ s \in [0,T] $, $ t \in [s,T] $ 
	 it holds that
	\begin{equation}
	\begin{split}
	&
	\int_s^t
	(t-\llcorner u \lrcorner_\theta)^{ \beta - \eta }    
	\, 
	du  
	\leq  
	\int_s^t
	(t-u)^{ \beta - \eta } 
	\,
	du  
	\leq  
	\tfrac{
		(t-s)^{ 1 + \beta - \eta }    
	}
	{
		1 + \beta - \eta
	}  
	.
	\end{split}
	\end{equation}
	Combining this and~\eqref{eq:intermediate}
	establishes~\eqref{eq:Estimate3}.
The proof of Lemma~\ref{lemma:some_helping_estimates}
is thus completed.
\end{proof}
\begin{lemma}
	\label{lemma:more_regularity}
	Assume Setting~\ref{setting:Strengthened_strong_a_priori_moment_bounds},
	assume that
	$ \beta \leq \gamma $,
	and
	let
	$ p \in [2,\infty) $,   
	$ t \in [0,T] $.
	Then it holds that
	\begin{equation}
	\begin{split}
	\| \O_t \|_{ \L^p( \P; H_\gamma ) }
	&
	\leq 
	3 p  
	\| B \|_{\HS(U,H_\beta)}
	\tfrac{ [ \max \{ T, 1 \} ]^{ \nicefrac{3}{2}  
	} }
	{  1 + 2 \beta - 2 \gamma }
	\big( 
	1
	+
	4 p^2 
	\| B \|_{ \HS(U, H_\beta) }^2
	| \sup\nolimits_{ h \in \H } \values_h |^{-2\beta} 
	\big)
	.
	\end{split}
	\end{equation}
\end{lemma}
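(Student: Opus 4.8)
The plan is to insert the mild-It\^o representation of $\O_t$ furnished by Lemma~\ref{lemma:Ito_representation} into the three moment estimates of Lemma~\ref{lemma:some_helping_estimates}, specialized to the interval $[0,t]$ with $\rho=\eta=\gamma$, and then to majorize every appearing power of $t$ and $|\theta|_T$ as well as every denominator by the single quantity $\tfrac{[\max\{T,1\}]^{\nicefrac{3}{2}}}{1+2\beta-2\gamma}$.

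First I would fix, for each $s\in[0,T]$, an $(\f_t)_{t\in[s,T]}$-adapted stochastic process $X_{s,(\cdot)}(\cdot)\colon[s,T]\times\Omega\to H_\beta$ with continuous sample paths which satisfies $[X_{s,t}]_{\P,\B(H_\beta)}=\int_s^t B\,dW_u$ for every $t\in[s,T]$; such processes exist by the continuity of stochastic convolutions (cf., e.g., Da Prato \& Zabczyk~\cite{dz92}). Lemma~\ref{lemma:Ito_representation} then writes $[\O_t]_{\P,\B(H_\gamma)}$ as the sum of one stochastic integral over $[0,t]$, whose integrand is the difference of $\tfrac{B}{1+\|X\|_H^2}$ and $\tfrac{2X\langle X,B(\cdot)\rangle_H}{(1+\|X\|_H^2)^2}$, and one Bochner integral over $[0,t]$. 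Splitting the stochastic integral into its two summands by linearity and applying the triangle inequality in $\L^p(\P;H_\gamma)$, it remains to bound three terms. Since $\beta\le\gamma$ and, by Setting~\ref{setting:Strengthened_strong_a_priori_moment_bounds}, $\gamma\in[0,\nicefrac{1}{2}+\beta)$, we have $\gamma\in[\beta,\nicefrac{1}{2}+\beta)\subseteq[\beta,1+\beta)$, so Lemma~\ref{lemma:some_helping_estimates} applies with $s=0$, $\rho=\gamma$, $\eta=\gamma$ and bounds the three terms by $\|B\|_{\HS(U,H_\beta)}\tfrac{p\,t^{\nicefrac{1}{2}+\beta-\gamma}}{\sqrt{2(1+2\beta-2\gamma)}}$, by $2\sqrt{2}\,p^3\|B\|_{\HS(U,H_\beta)}^3|\sup\nolimits_{h\in\H}\values_h|^{-2\beta}\tfrac{t^{\nicefrac{1}{2}+\beta-\gamma}}{\sqrt{1+2\beta-2\gamma}}|\theta|_T$, and by $2\sqrt{2}\,p\,\|B\|_{\HS(U,H_\beta)}^3|\sup\nolimits_{h\in\H}\values_h|^{-2\beta}\tfrac{t^{1+\beta-\gamma}}{1+\beta-\gamma}[|\theta|_T]^{\nicefrac{1}{2}}$, respectively.

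What remains is a simplification of exponents and constants, and this bookkeeping is the only place requiring care. Since $\beta\le\gamma$, the exponents $\nicefrac{1}{2}+\beta-\gamma$ (from the first term), $\nicefrac{3}{2}+\beta-\gamma$ (obtained after bounding $|\theta|_T\le\max\{T,1\}$ in the second term), and $\nicefrac{3}{2}+\beta-\gamma$ (likewise for $t^{1+\beta-\gamma}[|\theta|_T]^{\nicefrac{1}{2}}$) all lie in $[0,\nicefrac{3}{2}]$, so from $t\le T\le\max\{T,1\}$, $|\theta|_T\le T\le\max\{T,1\}$, and $\max\{T,1\}\ge1$ each of the three $T$-dependent factors is at most $[\max\{T,1\}]^{\nicefrac{3}{2}}$. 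Moreover $0<1+2\beta-2\gamma\le1+\beta-\gamma\le1$, so $\tfrac{1}{\sqrt{1+2\beta-2\gamma}}\le\tfrac{1}{1+2\beta-2\gamma}$ and $\tfrac{1}{1+\beta-\gamma}\le\tfrac{1}{1+2\beta-2\gamma}$. Finally, $p\ge2$ gives $p\le\tfrac{p^3}{4}$, whence the third term (the only one linear in $p$) is dominated by $\tfrac{1}{\sqrt{2}}p^3\|B\|_{\HS(U,H_\beta)}^3|\sup\nolimits_{h\in\H}\values_h|^{-2\beta}\tfrac{[\max\{T,1\}]^{\nicefrac{3}{2}}}{1+2\beta-2\gamma}$, and the sum of the three bounds is at most $\tfrac{[\max\{T,1\}]^{\nicefrac{3}{2}}}{1+2\beta-2\gamma}\big(\tfrac{p}{\sqrt{2}}\|B\|_{\HS(U,H_\beta)}+\tfrac{5\sqrt{2}}{2}p^3\|B\|_{\HS(U,H_\beta)}^3|\sup\nolimits_{h\in\H}\values_h|^{-2\beta}\big)$. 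Because $\tfrac{1}{\sqrt{2}}\le3$ and $\tfrac{5\sqrt{2}}{2}\le12=3\cdot4$, this is $\le 3p\|B\|_{\HS(U,H_\beta)}\tfrac{[\max\{T,1\}]^{\nicefrac{3}{2}}}{1+2\beta-2\gamma}\big(1+4p^2\|B\|_{\HS(U,H_\beta)}^2|\sup\nolimits_{h\in\H}\values_h|^{-2\beta}\big)$, which is the claimed estimate. I expect no real obstacle: the argument is a direct assembly of Lemmas~\ref{lemma:Ito_representation} and~\ref{lemma:some_helping_estimates}, with the hypotheses $\beta\le\gamma$ and $p\ge2$ being precisely what is needed to lower the exponents to $\nicefrac{3}{2}$, to lower-bound the denominators by $1+2\beta-2\gamma$, and to absorb the stray factor of $p$ into $p^3$.
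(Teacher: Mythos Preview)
Your proposal is correct and follows essentially the same route as the paper's proof: apply Lemma~\ref{lemma:Ito_representation}, split by the triangle inequality into three terms, invoke Lemma~\ref{lemma:some_helping_estimates} with $s=0$ and $\rho=\eta=\gamma$, and then simplify constants. The only difference is cosmetic bookkeeping in the last step: the paper uses $1+2\beta-2\gamma\le 2(1+\beta-\gamma)$ and the identity $\sqrt{2}+\tfrac{1}{\sqrt{2}}\le 3$ to absorb the Bochner term into the common factor $(1+4p^2\|B\|_{\HS(U,H_\beta)}^2|\sup_{h\in\H}\values_h|^{-2\beta})$, whereas you use the slightly sharper $1+2\beta-2\gamma\le 1+\beta-\gamma$ together with $p\le p^3/4$; both lead to the same final bound.
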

\begin{proof}[Proof of Lemma~\ref{lemma:more_regularity}]
	Throughout this proof
	for every $ s \in [0,T] $ 
	let  
	$ X_{s, (\cdot) }(\cdot) 
	=
	( X_{s,u}(\omega) )_{ (u,\omega)\in[s,T]\times\Omega }
	\colon $ 
	$ [s,T] \times \Omega
	\to H_\beta $
	be
	an
	$ ( \f_u )_{ u \in [s,T] } $-adapted
	stochastic process with continuous sample paths
	which satisfies for every
	$ u \in [s,T] $
	that
	$ [ X_{s,u} ]_{\P, \mathcal{B}(H_\beta) }
	=
	\int_s^u B \, dW_\tau $. 
	Lemma~\ref{lemma:Ito_representation}
	(with
	$ X_{s, u} = X_{s, u} $
	for
	$ u \in [s,T] $, $ s \in [0,T] $ 
	in the notation of 
	Lemma~\ref{lemma:Ito_representation})
	implies that
	\begin{equation}
	\begin{split}
	\label{eq:triangle_inequality}
	& 
	\| \O_t \|_{ \L^p( \P; H_\gamma ) } 
	\leq 
	\Big\|
	\int_0^t
	\chi_{ \llcorner u \lrcorner_\theta } 
	\Big(
	\tfrac{
		e^{(t- \llcorner u \lrcorner_\theta)A} 
		B  }{1+\| X_{{\llcorner u \lrcorner_\theta}, u} \|_H^2}
	-
	\tfrac{2 
		e^{(t- \llcorner u \lrcorner_\theta)A} 
		X_{{ \llcorner u \lrcorner_\theta }, u} \langle X_{{ \llcorner u \lrcorner_\theta }, u}, B  (\cdot) \rangle_H}
	{ ( 1 + \| X_{{ \llcorner u \lrcorner_\theta }, u} \|_H^2 )^2 }
	\Big)
	\,
	dW_u
	\Big\|_{L^p(\P; H_\gamma )}
	\\
	&
	+ 
	\Big\|
	\int_0^t
	\chi_{\llcorner u \lrcorner_\theta} 
	e^{(t-\llcorner u \lrcorner_\theta)A}
	\Big[ 
	\tfrac{ 4  
		\|  \mathbb{B}
		X_{\llcorner u \lrcorner_\theta, u}  \|_U^2
		X_{\llcorner u \lrcorner_\theta, u}
	}
	{(1 + \| X_{\llcorner u \lrcorner_\theta, u} \|_H^2)^3}
	-
	\tfrac{
		2 B 
		\mathbb{B}
		X_{\llcorner u \lrcorner_\theta, u}
		+ 
		\| B \|_{\HS(U,H)}^2
		X_{\llcorner u \lrcorner_\theta, u} 
	}{
		( 1 + \| X_{\llcorner u \lrcorner_\theta, u} \|_H^2 )^2
	}
	\Big]
	\, 
	du
	\Big\|_{\L^p(\P; H_\gamma )}
	.
	\end{split}
	\end{equation}
	Moreover, observe that the
	triangle inequality
	proves that
	\begin{equation}
	\begin{split}
	\label{eq:split_triangle}
	&
	\Big\|
	\int_0^t
	\chi_{ \llcorner u \lrcorner_\theta } 
	\Big(
	\tfrac{
		e^{(t- \llcorner u \lrcorner_\theta)A} 
		B  }{1+\| X_{{ \llcorner u \lrcorner_\theta }, u} \|_H^2}
	-
	\tfrac{2 
		e^{(t- \llcorner u \lrcorner_\theta )A} 
		X_{{ \llcorner u \lrcorner_\theta }, u} \langle X_{{ \llcorner u \lrcorner_\theta }, u}, B (\cdot) \rangle_H}
	{ ( 1 + \| X_{{ \llcorner u \lrcorner_\theta }, u} \|_H^2 )^2 }
	\Big)
	\,
	dW_u
	\Big\|_{L^p(\P; H_\gamma )}
	\\
	&
	\leq
	\Big\|
	\int_0^t 
	\chi_{ \llcorner u \lrcorner_\theta } 
	e^{(t- \llcorner u \lrcorner_\theta )A} 
	\,
	\tfrac{B  }{1+\| X_{{ \llcorner u \lrcorner_\theta }, u} \|_H^2}  
	\, 
	dW_u 
	\Big\|_{L^p(\P; H_\gamma )} 
	\\
	&
	\quad
	+
	\Big\|
	\int_0^t
	\chi_{ \llcorner u \lrcorner_\theta } 
	\tfrac{2 
		e^{(t- \llcorner u \lrcorner_\theta )A} 
		X_{{ \llcorner u \lrcorner_\theta }, u} \langle X_{{ \llcorner u \lrcorner_\theta }, u}, B  (\cdot) \rangle_H}
	{ ( 1 + \| X_{{\llcorner u \lrcorner_\theta}, u} \|_H^2 )^2 }
	\,
	dW_u  
	\Big\|_{L^p(\P; H_\gamma )}
	.
	\end{split}
	\end{equation}
	Next note that
	Lemma~\ref{lemma:some_helping_estimates}
	(with 
	$ p = p $,
	$ \rho = \gamma $,
	$ \eta = \gamma $, 
	$ X_{s, \tau} = X_{s, \tau} $
	for 
	$ \tau \in [s,T] $,
	$ s \in [0,T] $ 
	in the notation of Lemma~\ref{lemma:some_helping_estimates})
	shows that
	\begin{equation}
	\begin{split} 
	&
	\Big\|
	\int_0^t 
	\chi_{ \llcorner u \lrcorner_\theta } 
	e^{(t- \llcorner u \lrcorner_\theta)A} 
	\,
	\tfrac{B }{1+\| X_{{\llcorner u \lrcorner_\theta}, u} \|_H^2}  
	\, 
	dW_u
	\Big\|_{ L^p(\P; H_\gamma )}
	\leq  
	\| B \|_{\HS(U,H_\beta)}
	\tfrac{
		p  t^{ \nicefrac{1}{2} + \beta - \gamma } 
	}
	{ \sqrt{ 2 ( 1 + 2 \beta - 2 \gamma ) } }
	\end{split}
	\end{equation}
	\begin{equation}
	\begin{split}
	\label{eq:helping_lemma}
	&
	\Big\|
	\int_0^t
	\chi_{ \llcorner u \lrcorner_\theta } 
	\tfrac{2 
		e^{(t- \llcorner u \lrcorner_\theta )A} 
		X_{{ \llcorner u \lrcorner_\theta }, u} \langle X_{{ \llcorner u \lrcorner_\theta }, u}, B  (\cdot) \rangle_H}
	{ ( 1 + \| X_{{ \llcorner u \lrcorner_\theta }, u} \|_H^2 )^2 }
	\,
	dW_u 
	\Big\|_{L^p(\P; H_\gamma )}
	\\
	&
	\leq 
	2 \sqrt{2} p^3  
	\| B \|_{ \HS(U, H_\beta) }^3
	| \sup\nolimits_{ h \in \H } \values_h |^{-2\beta}
	\tfrac{  
		t^{\nicefrac{1}{2} + \beta - \gamma} 
	}
	{ \sqrt{ 1 + 2 \beta - 2 \gamma } }  
	|\theta|_T
	,
	\end{split}
	\end{equation}
	and
	\begin{equation}
	\begin{split}
	\label{eq:sec_est}
	&
	\Big\|
	\int_0^t
	\chi_{\llcorner u \lrcorner_\theta} 
	e^{(t-\llcorner u \lrcorner_\theta)A}
	\Big( 
	\tfrac{ 4  
		\|  \mathbb{B}
		X_{\llcorner u \lrcorner_\theta, u}  \|_U^2 X_{\llcorner u \lrcorner_\theta, u}  }
	{(1 + \| X_{\llcorner u \lrcorner_\theta, u} \|_H^2)^3}
	-
	\tfrac{
		2 B   
		\mathbb{B}  X_{\llcorner u \lrcorner_\theta, u} 
		+ 
		\| B \|_{\HS(U,H)}^2  X_{\llcorner u \lrcorner_\theta, u} 
	}{
		( 1 + \| X_{\llcorner u \lrcorner_\theta, u} \|_H^2   )^2
	}
	\Big)
	\, 
	du
	\Big\|_{\L^p(\P; H_\gamma)}
	\\
	&
	\leq 
	2
	\sqrt{2}
	p 
	\| B \|_{ \HS(U,H_\beta) }^3 
	| \sup\nolimits_{h \in \H} \values_h |^{-2\beta} 
	\tfrac{  t^{ 1 + \beta - \gamma } 
	}{  1 + \beta - \gamma } 
	[ | \theta |_T ]^{ \nicefrac{1}{2} }
	.
	\end{split}
	\end{equation}
	Next we combine~\eqref{eq:split_triangle}--\eqref{eq:helping_lemma}
	to obtain that
	\begin{equation}
	\begin{split}
	\label{eq:fir_est}
	&
	\Big\|
	\int_0^t
	\chi_{\llcorner u \lrcorner_\theta} 
	\Big(
	\tfrac{
		e^{(t- \llcorner u \lrcorner_\theta )A} 
		B  }{1+\| X_{{ \llcorner u \lrcorner_\theta }, u} \|_H^2}
	-
	\tfrac{2 
		e^{(t- \llcorner u \lrcorner_\theta)A} 
		X_{{\llcorner u \lrcorner_\theta}, u} \langle X_{{\llcorner u \lrcorner_\theta}, u}, B  (\cdot) \rangle_H}
	{ ( 1 + \| X_{{\llcorner u \lrcorner_\theta}, u} \|_H^2 )^2 }
	\Big)
	\,
	dW_u 
	\Big\|_{L^p(\P; H_\gamma )}
	\\
	&
	\leq 
	\| B \|_{\HS(U,H_\beta)}
	\tfrac{ p 	
		t^{ \nicefrac{1}{2} + \beta - \gamma } 
	}
	{ \sqrt{ 2 ( 1 + 2 \beta - 2 \gamma ) } }
	\big( 
	1
	+
	4 p^2 
	\| B \|_{ \HS(U, H_\beta) }^2
	| \sup\nolimits_{ h \in \H } \values_h |^{-2\beta} 
	|\theta|_T
	\big)
	.  
	\end{split}
	\end{equation}
	This, \eqref{eq:triangle_inequality},
	\eqref{eq:sec_est}, the fact that
	$ \sqrt{2} + \tfrac{1}{\sqrt{2}} \leq 3 $,
	the
	fact that
	$ 1 + 2 \beta - 2 \gamma \leq 2 ( 1 + \beta - \gamma ) $,
	and the fact that
	$ \forall \, x \in (0,1]
	\colon \nicefrac{ 1 }{ \sqrt{ x } }
	\leq \nicefrac{ 1 }{ x } $ 
	demonstrate that
	\begin{equation}
	\begin{split}
	&
	\| \O_t \|_{ \L^p( \P; H_\gamma ) } 
	\leq 
	4
	\sqrt{2}
	p  
	\| B \|_{ \HS(U,H_\beta) }^3 
	| \sup\nolimits_{h \in \H} \values_h |^{-2\beta} 
	\tfrac{ [ \max \{ T, 1 \} ]^{ \nicefrac{3}{2}   } }{ 1 + 2 \beta - 2 \gamma }  
	\\
	&
	\quad
	+
	\tfrac{p }{ \sqrt{2} }  
	\| B \|_{\HS(U,H_\beta)}
	\tfrac{ [ \max \{ T, 1 \} ]^{ \nicefrac{3}{2}   } }
	{  1 + 2 \beta - 2 \gamma }
	\big( 
	1
	+
	4 p^2 
	\| B \|_{ \HS(U, H_\beta) }^2
	| \sup\nolimits_{ h \in \H } \values_h |^{-2\beta}  
	\big) 
	\\
	&
	\leq 
	3 p 
	\| B \|_{\HS(U,H_\beta)}
	\tfrac{ [ \max \{ T, 1 \} ]^{ \nicefrac{3}{2}  } }
	{ 1 + 2 \beta - 2 \gamma }
	\big( 
	1
	+
	4 p^2 
	\| B \|_{ \HS(U, H_\beta) }^2
	| \sup\nolimits_{ h \in \H } \values_h |^{-2\beta} 
	\big)
	.
	\end{split}
	\end{equation}
	The proof of Lemma~\ref{lemma:more_regularity}
	is thus completed.
\end{proof}
\subsection[H\"older continuity
of temporal approximations of stochastic convolutions]{H\"older continuity
	of temporally semi-discrete approximations of stochastic convolutions}
\label{subsection:Holder}
In this subsection we
combine 
Lemma~\ref{lemma:Ito_representation}
and
Lemma~\ref{lemma:some_helping_estimates}
to establish in
Lemma~\ref{lemma:noise_diff} 
a temporal regularity property
for the approximation 
process 
$ \O \colon [0,T] \times \Omega \to H_\gamma $
from Setting~\ref{setting:Strengthened_strong_a_priori_moment_bounds}.
\begin{lemma}
	\label{lemma:noise_diff}
	Assume Setting~\ref{setting:Strengthened_strong_a_priori_moment_bounds},
	assume that
	$ \beta \leq \gamma $,
	and let 
	$ p \in [2,\infty) $,   
	$ \rho \in [ 0, \nicefrac{1}{2} + \beta - \gamma ) $. 
	Then it holds for every 
	$ s \in [0,T] $, $ t \in [s,T] $
	that
	\begin{equation}
	\begin{split}
	\| \O_t - \O_{ s }
	\|_{ \L^p( \P; H_\gamma ) }  
	&\leq
	\tfrac{3 p^3 
		\| B \|_{\HS(U,H_\beta)}
		[\max \{ T, 1 \} ]^{ 2
		}
		\max 
		\{
		| \sup\nolimits_{ h \in \H } \values_h |^{- 2 \beta},
		1 \}
		(
		1   
		+  
		8
		\| B \|_{\HS(U, H_\beta)}^2 
		)
	}{ \sqrt{  1 + 2 ( \beta - \gamma - \rho )  } } 
	(t-s)^\rho 
	.
	\end{split}
	\end{equation}
\end{lemma}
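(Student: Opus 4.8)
The plan is to compare the two It\^o representations of $\O_t$ and $\O_s$ supplied by Lemma~\ref{lemma:Ito_representation} and estimate their difference term by term. Writing $\psi$ for the taming map as in the proof of Lemma~\ref{lemma:Ito_representation}, the representation expresses $\O_t$ as a stochastic integral over $[0,t]$ with integrand built from $e^{(t-\llcorner u\lrcorner_\theta)A}$ acting on $\psi'(X_{\llcorner u\lrcorner_\theta,u})B$, plus a Bochner integral over $[0,t]$ with integrand involving the second-order Kolmogorov-type correction term, all damped by the cutoff $\chi_{\llcorner u\lrcorner_\theta}$. Subtracting the analogous expression for $\O_s$, the difference splits into (a) the part of the stochastic integral over $(s,t]$, (b) the part of the Bochner integral over $(s,t]$, and (c) the part over $[0,s]$ coming from the mismatch $e^{(t-\llcorner u\lrcorner_\theta)A}-e^{(s-\llcorner u\lrcorner_\theta)A}=(e^{(t-s)A}-\operatorname{Id})e^{(s-\llcorner u\lrcorner_\theta)A}$. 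Pieces (a) and (b) are handled directly by Lemma~\ref{lemma:some_helping_estimates} (the first two estimates for (a) after a further triangle-inequality split of the integrand into its two summands, and the third estimate for (b)), with $\rho$ there chosen so that $\rho_{\text{there}}-\beta$ absorbs the extra $H_\gamma$-regularity; since these contributions carry a factor $(t-s)^{\nicefrac12+\beta-\gamma}$ or $(t-s)^{1+\beta-\gamma}$, which dominates $(t-s)^\rho$ on $[0,T]$ up to a $[\max\{T,1\}]$-power because $\rho<\nicefrac12+\beta-\gamma$, they are of the required order.

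The remaining work is piece (c). Here the key tool is the smoothing inequality $\|(e^{(t-s)A}-\operatorname{Id})(-A)^{-\rho}\|_{L(H)}\le C_\rho (t-s)^\rho$ for $\rho\in[0,1]$, which converts the time increment $t-s$ into a factor $(t-s)^\rho$ at the cost of $\rho$ derivatives. Applying this to the stochastic-integral part of (c) and using the Burkholder--Davis--Gundy inequality from Da Prato \& Zabczyk, Lemma~7.7, one is left to bound $\int_0^s\|(-A)^{\gamma+\rho-\beta}e^{(s-\llcorner u\lrcorner_\theta)A}\|_{L(H)}^2\,du$ times the $L^{2p}$-norms of the taming-related integrands, exactly as in the proof of Lemma~\ref{lemma:some_helping_estimates} but with $\rho_{\text{there}}=\gamma+\rho$; the hypothesis $\rho<\nicefrac12+\beta-\gamma$ is precisely what makes $\gamma+\rho<\nicefrac12+\beta$, so the exponent $2\beta-2(\gamma+\rho)>-1$ keeps $\int_0^s(s-\llcorner u\lrcorner_\theta)^{2\beta-2(\gamma+\rho)}\,du$ finite, producing the denominator $\sqrt{1+2(\beta-\gamma-\rho)}$. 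The Bochner-integral part of (c) is treated similarly, now requiring only $\gamma+\rho<1+\beta$, which holds a fortiori, and yields a contribution of order $(t-s)^\rho$ without the square root denominator (so it does not worsen the stated constant). Finally all four contributions are collected and crudely bounded: replacing the various mixed powers of $T$, of $|\sup_{h\in\H}\values_h|^{-\beta}$, and of $\|B\|_{\HS(U,H_\beta)}$ by the single factors $[\max\{T,1\}]^{2}$, $\max\{|\sup_{h\in\H}\values_h|^{-2\beta},1\}$, and $(1+8\|B\|_{\HS(U,H_\beta)}^2)$, and absorbing all numerical constants into $3p^3$, gives the claimed inequality.

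The main obstacle is purely bookkeeping rather than conceptual: piece (c) forces us to re-run the estimates of Lemma~\ref{lemma:some_helping_estimates} with the shifted smoothness parameter $\rho_{\text{there}}=\gamma+\rho$ (equivalently with $\beta_{\text{there}}$ unchanged but the target exponent raised), and one must check carefully that every invocation of $\|(-A)^{\text{power}}e^{(s-\llcorner u\lrcorner_\theta)A}\|_{L(H)}$ has a \emph{non-positive} power or an integrable singularity — this is where the three standing constraints $\beta\le\gamma$, $\rho<\nicefrac12+\beta-\gamma$, and $p\ge2$ are all used. A secondary nuisance is ensuring the constants genuinely fit under the stated bound after combining the $(t-s)^{\nicefrac12+\beta-\gamma}$, $(t-s)^{1+\beta-\gamma}$, and $(t-s)^\rho$ terms into a common $(t-s)^\rho$; this is where the elementary inequalities $1+2\beta-2\gamma\le 2(1+\beta-\gamma)$ and $x^{-1/2}\le x^{-1}$ on $(0,1]$, already used in the proof of Lemma~\ref{lemma:more_regularity}, reappear, together with $(t-s)^{q}\le[\max\{T,1\}]^{q-\rho}(t-s)^\rho$ for $q\ge\rho$.
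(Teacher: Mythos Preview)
Your proposal is correct and follows essentially the same approach as the paper's proof: the same four-way split via Lemma~\ref{lemma:Ito_representation} and the triangle inequality, the same application of Lemma~\ref{lemma:some_helping_estimates} (with $\rho_{\text{there}}=\eta_{\text{there}}=\gamma$) to the $[s,t]$ pieces, the same use of the smoothing bound $\|(-A)^{-\rho}(e^{(t-s)A}-\operatorname{Id}_H)\|_{L(H)}\le(t-s)^\rho$ together with BDG for the $[0,s]$ pieces, and the same final consolidation of constants. The only cosmetic difference is that where you describe ``re-running the estimates of Lemma~\ref{lemma:some_helping_estimates} with the shifted smoothness parameter $\gamma+\rho$'' for piece~(c), the paper writes those BDG computations out explicitly rather than invoking the lemma; the content is identical.
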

\begin{proof}[Proof of Lemma~\ref{lemma:noise_diff}]
	Throughout this proof
	for every $ s \in [0,T] $
	let
	$ X_{s, (\cdot) }(\cdot) 
	=
	( X_{s,t}(\omega) )_{ (t,\omega)\in[s,T]\times\Omega }
	\colon $
	$ [s,T] \times \Omega
	\to H_\beta $
	be
	an
	$ ( \f_t )_{ t \in [s,T] } $-adapted
	stochastic process with continuous sample paths
	which satisfies for every 
	$ t \in [s,T] $
	that
	$ [ X_{s,t} ]_{\P, \mathcal{B}(H_\beta) }
	=
	\int_s^t B \, dW_u $. 
	Lemma~\ref{lemma:Ito_representation}
	(with
	$ X_{s, t} = X_{s, t} $
	for  
	$ t \in [s,T] $,
	$ s \in [0,T] $ 
	in the notation of  
	Lemma~\ref{lemma:Ito_representation})
	and the triangle inequality
	prove for every  
	$ s \in [0,T] $, $ t \in [s,T] $
	that
	\begin{equation}
	\begin{split}
	\label{eq:first_diff} 
	& 
	\| \O_t - \O_{ s }
	\|_{ \L^p( \P; H_\gamma ) }
	\\
	&
	\leq
	\Big\|
	\int_{ s }^t 
	\chi_{ \llcorner u \lrcorner_\theta } 
	e^{(t- \llcorner u \lrcorner_\theta)A}
	\Big[
	\tfrac{ B }{1+\| X_{{\llcorner u \lrcorner_\theta}, u} \|_H^2}
	-
	\tfrac{2 X_{{\llcorner u \lrcorner_\theta}, u} \langle 
		X_{{\llcorner u \lrcorner_\theta}, u},  B  (\cdot) \rangle_H}
	{ ( 1 + \| X_{{\llcorner u \lrcorner_\theta}, u} \|_H^2 )^2 } 
	\Big]
	\, 
	dW_u 
	\Big\|_{L^p( \P; H_\gamma ) }
	\\
	& 
	+  
	\Big\|
	\int_{ s }^t
	\chi_{ \llcorner u \lrcorner_\theta } 
	e^{(t- \llcorner u \lrcorner_\theta )A}
	\Big[ 
	\tfrac{ 4  
		\|  \mathbb{B}
		X_{ \llcorner u \lrcorner_\theta, u}  \|_U^2
		X_{\llcorner u \lrcorner_\theta, u} }
	{(1 + \| X_{\llcorner u \lrcorner_\theta, u} \|_H^2)^3}
	-
	\tfrac{
		2 B  
		\mathbb{B}
		X_{\llcorner u \lrcorner_\theta, u}
		+ 
		\| B \|_{\HS(U,H)}^2
		X_{\llcorner u \lrcorner_\theta, u} 
	}{
		( 1 + \| X_{\llcorner u \lrcorner_\theta, u} \|_H^2 )^2
	}
	\Big]  
	\, 
	du
	\Big\|_{\L^p( \P; H_\gamma ) }
	\\
	&
	+
	\Big\|
	\int_0^{ s } 
	\chi_{ \llcorner u \lrcorner_\theta } 
	\big[
	e^{(t- \llcorner u \lrcorner_\theta)A}
	-
	e^{(s - \llcorner u \lrcorner_\theta )A}
	\big]
	\Big[
	\tfrac{ B }{1+\| X_{{\llcorner u \lrcorner_\theta}, u} \|_H^2}
	-
	\tfrac{2 X_{{\llcorner u \lrcorner_\theta}, u} \langle 
		X_{{\llcorner u \lrcorner_\theta}, u},  B  (\cdot) \rangle_H}
	{ ( 1 + \| X_{{\llcorner u \lrcorner_\theta}, u} \|_H^2 )^2 } 
	\Big]
	\, 
	dW_u 
	\Big\|_{L^p( \P; H_\gamma ) }
	\\
	& 
	+  
	\int_0^{ s }
	\Big\|
	\big[
	e^{(t- \llcorner u \lrcorner_\theta)A}
	-
	e^{(s - \llcorner u \lrcorner_\theta)A}
	\big]
	\Big[ 
	\tfrac{ 4  
		\|  \mathbb{B}
		X_{\llcorner u \lrcorner_\theta, u}  \|_U^2
		X_{\llcorner u \lrcorner_\theta, u}
	}
	{(1 + \| X_{\llcorner u \lrcorner_\theta, u} \|_H^2)^3}
	-
	\tfrac{
		2 B  
		\mathbb{B}
		X_{\llcorner u \lrcorner_\theta, u}
		+ 
		\| B \|_{\HS(U,H)}^2 
		X_{\llcorner u \lrcorner_\theta, u}
	}{
		( 1 + \| X_{\llcorner u \lrcorner_\theta, u} \|_H^2 )^2
	}
	\Big]
	\Big\|_{\L^p( \P; H_\gamma ) }
	\, 
	du 
	.
	\end{split}
	\end{equation}
	Furthermore, observe that the triangle inequality implies for every 
	$ s \in [0,T] $, $ t \in [s,T] $ 
	that
	\begin{equation} 
	\begin{split} 
	\label{eq:some_triangle}
	&
	\Big\|
	\int_{ s }^t 
	\chi_{ \llcorner u \lrcorner_\theta } 
	e^{(t- \llcorner u \lrcorner_\theta )A}
	\Big[
	\tfrac{ B }{1+\| X_{{\llcorner u \lrcorner_\theta}, u} \|_H^2}
	-
	\tfrac{2 X_{{\llcorner u \lrcorner_\theta}, u} \langle 
		X_{{\llcorner u \lrcorner_\theta}, u},  B  (\cdot) \rangle_H}
	{ ( 1 + \| X_{{\llcorner u \lrcorner_\theta}, u} \|_H^2 )^2 } 
	\Big]
	\, 
	dW_u 
	\Big\|_{L^p( \P; H_\gamma ) }
	\\
	&
	\leq
	\Big\|
	\int_{ s }^t 
	\chi_{ \llcorner u \lrcorner_\theta } 
	e^{(t- \llcorner u \lrcorner_\theta )A} 
	\tfrac{ B }{1+\| X_{{ \llcorner u \lrcorner_\theta }, u} \|_H^2} 
	\, 
	dW_u 
	\Big\|_{L^p( \P; H_\gamma ) }
	\\
	&
	\quad
	+
	\Big\|
	\int_{ s }^t 
	\chi_{ \llcorner u \lrcorner_\theta }  
	e^{(t- \llcorner u \lrcorner_\theta )A} 
	\tfrac{2 X_{{ \llcorner u \lrcorner_\theta }, u} \langle 
		X_{{ \llcorner u \lrcorner_\theta }, u},  B  (\cdot) \rangle_H}
	{ ( 1 + \| X_{{ \llcorner u \lrcorner_\theta }, u} \|_H^2 )^2 }  
	\, 
	dW_u 
	\Big\|_{L^p( \P; H_\gamma ) }
	.
	\end{split}
	\end{equation}
	Next note that
	Lemma~\ref{lemma:some_helping_estimates}
	(with 
	$ p = p $,
	$ \rho = \gamma $,
	$ \eta = \gamma $,
	$ X_{s, t} = X_{s, t} $
	for  
	$ t \in [s,T] $,
	$ s \in [0,T] $ 
	in the notation of Lemma~\ref{lemma:some_helping_estimates})
	shows for every   
	$ s \in [0,T] $, $ t \in [s,T] $ 
	that
	\begin{equation}
	\begin{split} 
	\label{eq:est1}
	&
	\Big\|
	\int_{ s }^t 
	\chi_{ \llcorner u \lrcorner_\theta } 
	e^{(t- \llcorner u \lrcorner_\theta )A} 
	\,
	\tfrac{B }{1+\| X_{{ \llcorner u \lrcorner_\theta }, u} \|_H^2}  
	\, 
	dW_u
	\Big\|_{L^p(\P; H_\gamma)}
	%
	%
	\leq
	\tfrac{ p  
		\| B \|_{\HS(U,H_\beta)} }
	{ \sqrt{ 2 ( 1 + 2 \beta -2 \gamma ) }  }  
	( t - s )^{ \nicefrac{1}{2} + \beta - \gamma } 
	,
	\end{split}
	\end{equation}
	\begin{equation}
	\begin{split}
	\label{eq:est2}
	&
	\Big\|
	\int_{ s }^t 
	\chi_{ \llcorner u \lrcorner_\theta } 
	e^{(t- \llcorner u \lrcorner_\theta )A} 
	\,
	\tfrac{2 X_{{ \llcorner u \lrcorner_\theta }, u} \langle X_{{ \llcorner u \lrcorner_\theta }, u}, B 
		\, 
		dW_u  \rangle_H}
	{ ( 1 + \| X_{{ \llcorner u \lrcorner_\theta }, u} \|_H^2 )^2 } 
	\Big \|_{L^p(\P; H_\gamma)}
	\\
	&	 
	\leq  
	\tfrac{
		2 \sqrt{2} p^3  
		\| B \|_{ \HS( U, H_\beta ) }^3
		| \sup_{ h \in \H } \values_h |^{-2\beta}
	}
	{
		\sqrt{ 1 + 2 \beta - 2 \gamma }
	}  
	| \theta |_T   
	(t-s)^{ \nicefrac{1}{2} + \beta - \gamma } 
	,
	\end{split}
	\end{equation}
	and
	\begin{equation}
	\begin{split}
	\label{eq:est3}
	&
	\Big\|
	\int_s^t
	\chi_{ \llcorner u \lrcorner_\theta } 
	e^{(t- \llcorner u \lrcorner_\theta)A}
	\Big[ 
	\tfrac{ 4  
		\|  \mathbb{B}
		X_{\llcorner u \lrcorner_\theta, u}  \|_U^2
		X_{\llcorner u \lrcorner_\theta, u} 
	}
	{(1 + \| X_{\llcorner u \lrcorner_\theta, u} \|_H^2)^3}
	-
	\tfrac{
		2 B  
		\mathbb{B}
		X_{\llcorner u \lrcorner_\theta, u} 
		+ 
		\| B  \|_{\HS(U,H)}^2 
		X_{\llcorner u \lrcorner_\theta, u} 
	}{
		( 1 + \| X_{\llcorner u \lrcorner_\theta, u} \|_H^2 )^2
	}
	\Big]
	\, 
	du
	\Big\|_{\L^p(\P; H_\gamma)}
	\\
	&
	\leq 
	\tfrac{ 2 \sqrt{2} p  
	}{ 1 + \beta - \gamma  } 
	\| B \|_{\HS(U, H_\beta)}^3
	| \sup\nolimits_{ h \in \H } \values_h |^{-2\beta} 
	[ | \theta |_T ]^{\nicefrac{1}{2} } 
	(t-s)^{1 + \beta -  \gamma} 
	.
	\end{split}
	\end{equation}
	Moreover,
	observe that 
	the  
	Burkholder-Davis-Gundy type inequality in Lemma~7.7 in Da Prato \& Zabczyk~\cite{dz92}
	assures
	for every 
	$ s \in [0,T] $, $ t \in [s,T] $ 
	that
	\begin{equation}
	\begin{split}
	& 
	\Big\|
	\int_0^{ s } 
	\chi_{ \llcorner u \lrcorner_\theta } 
	\big[
	e^{(t-\llcorner u \lrcorner_\theta)A}
	-
	e^{( s - \llcorner u \lrcorner_\theta)A}
	\big]
	\Big[
	\tfrac{ B }{1+\| X_{{\llcorner u \lrcorner_\theta}, u} \|_H^2}
	-
	\tfrac{2 X_{{\llcorner u \lrcorner_\theta}, u} \langle 
		X_{{\llcorner u \lrcorner_\theta}, u},  B  (\cdot) \rangle_H}
	{ ( 1 + \| X_{{\llcorner u \lrcorner_\theta}, u} \|_H^2 )^2 } 
	\Big]
	\, 
	dW_u 
	\Big\|_{L^p( \P; H_\gamma ) }^2
	\\
	&
	\leq 
	\tfrac{ p^2 }{ 2 }
	\int_0^{ s } 
	\Big\|  
	e^{( s - \llcorner u \lrcorner_\theta)A}
	(
	e^{(t-s)A}
	-
	\operatorname{Id}_H
	)
	\Big[
	\tfrac{ B }{1+\| X_{\llcorner u \lrcorner_\theta, u} \|_H^2}
	-
	\tfrac{2 X_{\llcorner u \lrcorner_\theta, u} \langle 
		X_{\llcorner u \lrcorner_\theta, u},  B  (\cdot) \rangle_H}
	{ ( 1 + \| X_{\llcorner u \lrcorner_\theta, u} \|_H^2 )^2 } 
	\Big] 
	\Big\|_{\L^p( \P; \HS(U, H_\gamma ) ) }^2
	\,
	du
	\\
	&
	\leq
	\tfrac{p^2}{2}
	\int_0^{ s }   
	\big\|
	(-A)^{ \gamma + \rho - \beta  } e^{(s - \llcorner u \lrcorner_\theta)A}
	\big\|_{L(H)}^2
	\big\|
	(-A)^{ - \rho }
	( 
	e^{(t-s)A}
	-
	\operatorname{Id}_H
	) 
	\big\|_{L(H)}^2
	\\
	&
	\quad 
	\cdot
	\Big\|
	\tfrac{ B }{1+\| X_{\llcorner u \lrcorner_\theta, u} \|_H^2}
	-
	\tfrac{2 X_{\llcorner u \lrcorner_\theta, u} \langle 
		X_{\llcorner u \lrcorner_\theta, u},  B  (\cdot) \rangle_H}
	{ ( 1 + \| X_{\llcorner u \lrcorner_\theta, u} \|_H^2 )^2 }  
	\Big\|_{\L^p( \P; \HS(U, H_\beta ) ) }^2
	\,
	du
	\\
	&
	\leq 
	\tfrac{p^2  
	}{2}
	\int_0^{ s }   
	\big\|
	(-A)^{ \gamma + \rho - \beta  } e^{(s - \llcorner u \lrcorner_\theta)A}
	\big\|_{L(H)}^2
	(t-s)^{ 2 \rho } 
	\\
	&
	\quad
	\cdot
	\Big\|
	\tfrac{ 
		\| B \|_{\HS(U, H_\beta)} }{1+\| X_{\llcorner u \lrcorner_\theta, u} \|_H^2}
	+
	\tfrac{2 
		\| X_{\llcorner u \lrcorner_\theta, u} \langle 
		X_{\llcorner u \lrcorner_\theta, u},  B  (\cdot) \rangle_H 
		\|_{\HS(U,H_\beta)}
	}
	{ ( 1 + \| X_{\llcorner u \lrcorner_\theta, u} \|_H^2 )^2 }  
	\Big\|_{\L^p( \P; \R ) }^2 
	\,
	du 
	\\
	&
	\leq
	\tfrac{p^2  
	}{2}
	\int_0^{ s }   
	\big\|
	(-A)^{ \gamma + \rho - \beta  } e^{(s - \llcorner u \lrcorner_\theta)A}
	\big\|_{L(H)}^2
	(t-s)^{ 2 \rho }
	\\
	&
	\quad
	\cdot
	\Big\| 
	\| B \|_{\HS(U, H_\beta)} 
	+
	\tfrac{2 
		\| X_{\llcorner u \lrcorner_\theta, u} \|_{H_\beta} 
		\| 
		X_{\llcorner u \lrcorner_\theta, u} 
		\|_H 
		\| B  
		\|_{\HS(U, H )} 
	}
	{ ( 1 + \| X_{\llcorner u \lrcorner_\theta, u} \|_H^2 )^2 }  
	\Big\|_{ \L^p(\P; \R) }^2
	\,
	du
	.
	\end{split} 
	\end{equation} 
	Furthermore, note that the fact that
	$ \gamma + \rho - \beta  
	\in 
	[0, \nicefrac{1}{2} ) $
	and the 
	Burkholder-Davis-Gundy type inequality in Lemma~7.7 in Da Prato \& Zabczyk~\cite{dz92}
	hence imply for every 
	$ s \in [0,T] $, $ t \in [s,T] $
	that
	\begin{equation}
	\begin{split}
	\label{eq:BDGEstimated}
	&
	\Big\|
	\int_0^{ s } 
	\chi_{ \llcorner u \lrcorner_\theta } 
	\big[
	e^{(t- \llcorner u \lrcorner_\theta )A}
	-
	e^{(s - \llcorner u \lrcorner_\theta )A}
	\big]
	\Big[
	\tfrac{ B }{1+\| X_{{ \llcorner u \lrcorner_\theta }, u} \|_H^2}
	-
	\tfrac{2 X_{{ \llcorner u \lrcorner_\theta }, u} \langle 
		X_{{ \llcorner u \lrcorner_\theta }, u},  B  (\cdot) \rangle_H}
	{ ( 1 + \| X_{{ \llcorner u \lrcorner_\theta }, u} \|_H^2 )^2 } 
	\Big]
	\, 
	dW_u 
	\Big\|_{L^p( \P; H_\gamma ) }^2
	\\
	&
	\leq 
	\tfrac{p^2  
	}{2}
	\int_0^{ s }
	\big\|
	(-A)^{ \gamma + \rho - \beta  } e^{(s - \llcorner u \lrcorner_\theta)A}
	\big\|_{L(H)}^2
	(t-s)^{ 2 \rho }
	%
	%
	\big[ 
	\| B \|_{\HS(U, H_\beta)} 
	+ 
	\| X_{\llcorner u \lrcorner_\theta, u} \|_{ \L^{ p}( \P;H_\beta)}  
	\| B  
	\|_{\HS(U, H )} 
	\big]^2
	\,
	du
	\\
	&
	\leq 
	\tfrac{p^2  
	}{2}
	\int_0^{ s }  
	\big\|
	(-A)^{ \gamma + \rho - \beta  } e^{(s - \llcorner u \lrcorner_\theta)A}
	\big\|_{L(H)}^2
	(t-s)^{ 2 \rho }
	\\
	&
	\quad
	\cdot
	\Big[ 
	\| B \|_{\HS(U, H_\beta)} 
	+ 
	\Big[
	\tfrac{ p ( p-1 ) }{2} 
	\int_{ \llcorner u \lrcorner_\theta }^u 
	\| B \|_{ \HS(U, H_\beta) }^2
	\,
	dw
	\Big]^{ \nicefrac{1}{2} } 
	\| B  
	\|_{\HS(U, H )} 
	\Big]^2
	\, du
	\\
	&
	\leq 
	\tfrac{p^2  
	}{ 2 }
	\int_0^{ s }     
	\big\|
	(-A)^{ \gamma + \rho - \beta  } e^{(s - \llcorner u \lrcorner_\theta)A}
	\big\|_{L(H)}^2
	(t-s)^{ 2 \rho }
	\\
	&
	\quad
	\cdot
	\big[
	\| B \|_{\HS(U, H_\beta)} 
	+
	p 
	\| B \|_{ \HS(U, H_\beta) }
	\| B \|_{ \HS(U, H ) } 
	( u - \llcorner u \lrcorner_\theta )^{ \nicefrac{1}{2} }
	\big]^2
	\, du
	\\
	&
	\leq 
	\tfrac{p^2   
	}{ 2 }
	%
	(t-s)^{ 2 \rho }
	\| B \|_{\HS(U, H_\beta)}^2
	\big( 
	1
	+
	p 
	\sqrt{T} 
	\| B \|_{ \HS(U, H ) } 
	\big)^2
	\int_0^{ s }   
	\tfrac{ 
		1}
	{   
		( s - \llcorner u \lrcorner_\theta)^{ 2 ( \gamma + \rho - \beta )  }
	} 
	\, du 
	.
	\end{split} 
	\end{equation}
	In addition, observe that for every 
	$ s \in [0,T] $, $ t \in [s,T] $ 
	it holds that
	\begin{equation}
	\begin{split}
	&
	\int_0^{ s }   
	\tfrac{ 
		du }
	{   
		(s - \llcorner u \lrcorner_\theta)^{ 2 ( \gamma + \rho - \beta )  }
	}
	\leq 
	\int_0^{ s }   
	\tfrac{ 
		du }
	{   
		(s -  u)^{ 2 ( \gamma + \rho - \beta )  }
	} 
	%
	= 
	\tfrac{ s^{ 1 + 2 ( \beta - \gamma - \rho ) } }
	{ 1 + 2 ( \beta - \gamma - \rho ) } 
	\leq
	\tfrac{ T^{ 1 + 2 (\beta - \gamma - \rho ) } }{ 1 + 2 ( \beta - \gamma - \rho ) } 
	.
	\end{split}
	\end{equation}
	Combining this and~\eqref{eq:BDGEstimated}
	demonstrates that for every 
	$ s \in [0,T] $, $ t \in [s,T] $ 
	it holds that
	\begin{equation}
	\begin{split}
	\label{eq:Diff1}
	&
	\Big\|
	\int_0^{ s } 
	\chi_{ \llcorner u \lrcorner_\theta } 
	\big[
	e^{(t- \llcorner u \lrcorner_\theta )A}
	-
	e^{(s - \llcorner u \lrcorner_\theta )A}
	\big]
	\Big[
	\tfrac{ B }{1+\| X_{{ \llcorner u \lrcorner_\theta }, u} \|_H^2}
	-
	\tfrac{2 X_{{ \llcorner u \lrcorner_\theta}, u} \langle 
		X_{{\llcorner u \lrcorner_\theta}, u},  B  (\cdot) \rangle_H}
	{ ( 1 + \| X_{{\llcorner u \lrcorner_\theta}, u} \|_H^2 )^2 } 
	\Big]
	\, 
	dW_u 
	\Big\|_{L^p( \P; H_\gamma ) }
	\\
	&
	\leq
	\tfrac{ p   
	}{ \sqrt{ 2 } }
	(t-s)^{ \rho }
	\| B \|_{\HS(U, H_\beta)} 
	\big( 
	1
	+
	p 
	\sqrt{T} 
	\| B \|_{ \HS(U, H ) }  
	\big) 
	\tfrac{  
		T^{ \nicefrac{1}{2} + \beta - \gamma - \rho } 
	}{
		\sqrt{ 1 + 2 ( \beta - \gamma - \rho ) } } 
	.
	\end{split} 
	\end{equation}
	In the next step we observe that for every 
	$ s \in [0,T] $, $ t \in [s,T] $ 
	it holds that
	\begin{equation}
	\begin{split}
	& 
	\int_0^{ s }
	\Big\|
	\big[
	e^{(t-\llcorner u \lrcorner_\theta)A}
	-
	e^{(s - \llcorner u \lrcorner_\theta)A}
	\big]
	\Big[
	\tfrac{ 4  
		\|  \mathbb{B}
		X_{ \llcorner u \lrcorner_\theta, u}  \|_U^2
		X_{ \llcorner u \lrcorner_\theta, u}
	}
	{(1 + \| X_{ \llcorner u \lrcorner_\theta, u} \|_H^2)^3}
	-
	\tfrac{
		2 B  
		\mathbb{B}
		X_{ \llcorner u \lrcorner_\theta, u}
		+ 
		\| B \|_{\HS(U,H)}^2 
		X_{ \llcorner u \lrcorner_\theta, u}
	}{
		( 1 + \| X_{ \llcorner u \lrcorner_\theta, u} \|_H^2 )^2
	}
	\Big]  
	\Big\|_{\L^p( \P; H_\gamma ) }
	\, 
	du
	\\
	&
	\leq
	\int_0^{ s }
	\Big\|
	\|
	(-A)^{ \gamma + \rho - \beta  } e^{(s - \llcorner u \lrcorner_\theta)A}
	\|_{L(H)}
	\|
	(-A)^{ - \rho }
	( 
	e^{(t-s)A}
	-
	\operatorname{Id}_H
	) 
	\|_{L(H)}
	\\
	&
	\quad
	\cdot
	\Big\|
	\tfrac{ 4  
		\| \mathbb{B}
		X_{\llcorner u \lrcorner_\theta, u}  \|_U^2
		X_{\llcorner u \lrcorner_\theta, u}
	}
	{(1 + \| X_{\llcorner u \lrcorner_\theta, u} \|_H^2)^3}
	-
	\tfrac{
		2 B  
		\mathbb{B}
		X_{\llcorner u \lrcorner_\theta, u}
		+ 
		\| B \|_{\HS(U,H)}^2 
		X_{\llcorner u \lrcorner_\theta, u}
	}{
		( 1 + \| X_{\llcorner u \lrcorner_\theta, u} \|_H^2 )^2
	}
	\Big\|_{H_\beta}   
	\Big\|_{\L^p( \P; \R ) }
	\,
	du
	\\
	&
	\leq  
	\int_0^{ s }
	\|
	(-A)^{ \gamma + \rho - \beta  } e^{(s - \llcorner u \lrcorner_\theta)A}
	\|_{L(H)}
	(t-s)^\rho 
	\\
	&
	\quad
	\cdot
	\Big\| 
	\Big[
	\tfrac{ 4  
		\|  \mathbb{B}
		X_{\llcorner u \lrcorner_\theta, u}  \|_U^2 
	}
	{(1 + \| X_{\llcorner u \lrcorner_\theta, u} \|_H^2)^3}
	+
	\tfrac{
		2 
		\| B  
		\mathbb{B} \|_{L(H_\beta, H_\beta)} 
		+ 
		\| B \|_{\HS(U,H)}^2  
	}{
		( 1 + \| X_{\llcorner u \lrcorner_\theta, u} \|_H^2 )^2
	}    
	\Big]
	\| X_{ \llcorner u \lrcorner_\theta, u} \|_{ H_\beta }
	\Big\|_{\L^p( \P; \R ) }
	\,
	du
	\\
	&
	\leq   
	\int_0^{ s } 
	\|
	(-A)^{ \gamma + \rho - \beta  } e^{(s - \llcorner u \lrcorner_\theta )A}
	\|_{L(H)}
	%
	(t-s)^{ \rho }
	\\
	&
	\quad
	\cdot
	\Big\| 
	\Big[
	\tfrac{ 4  
		\|  \mathbb{B} \|_{L(H, U)}^2
		\| X_{\llcorner u \lrcorner_\theta, u}  \|_H^2 
	}
	{(1 + \| X_{\llcorner u \lrcorner_\theta, u} \|_H^2)^3}
	+
	\tfrac{
		2 
		\| B \|_{L(U,H_\beta)}  
		\| \mathbb{B} \|_{L(H_\beta, U)} 
		+ 
		\| B \|_{\HS(U,H)}^2  
	}{
		( 1 + \| X_{\llcorner u \lrcorner_\theta, u} \|_H^2 )^2
	}    
	\Big]
	\| X_{ \llcorner u \lrcorner_\theta, u} \|_{ H_\beta }
	\Big\|_{\L^p( \P; \R ) }
	\,
	du
	.
	\end{split} 
	\end{equation} 
	\sloppy
	This, the fact that
	$ \| \mathbb{B} \|_{L(H_\beta, U)}
	\leq 
	| \sup_{ h \in \H } \values_h |^{-\beta} 
	\| \mathbb{B} \|_{L(H, U) } $, 
	and the fact that
	$ \| B \|_{ \HS(U,H)}
	\leq | \sup_{ h \in \H } \values_h |^{-\beta} \| B \|_{\HS(U,H_\beta)} $
	show for every  
	$ s \in [0,T] $, $ t \in [s,T] $ 
	that
	\begin{equation}
	\begin{split}
	& 
	\int_0^{ s } 
	\Big\|
	\big[
	e^{(t-\llcorner u \lrcorner_\theta)A}
	-
	e^{(s - \llcorner u \lrcorner_\theta)A}
	\big]
	\Big[
	\tfrac{ 4  
		\|  \mathbb{B}
		X_{\llcorner u \lrcorner_\theta, u}  \|_U^2
		X_{\llcorner u \lrcorner_\theta, u}
	}
	{(1 + \| X_{\llcorner u \lrcorner_\theta, u} \|_H^2)^3}
	-
	\tfrac{
		2 B  
		\mathbb{B}
		X_{\llcorner u \lrcorner_\theta, u}
		+ 
		\| B \|_{\HS(U,H)}^2 
		X_{\llcorner u \lrcorner_\theta, u}
	}{
		( 1 + \| X_{\llcorner u \lrcorner_\theta, u} \|_H^2 )^2
	}
	\Big] 
	\Big\|_{\L^p( \P; H_\gamma ) }
	\, 
	du  
	\\
	&
	\leq   
	%
	(t-s)^{ \rho }
	\int_0^{ s }
	\|
	(-A)^{ \gamma + \rho - \beta  } e^{(s - \llcorner u \lrcorner_\theta)A}
	\|_{L(H)}  
	\\
	&
	\quad
	\cdot
	\big\| 
	\big( 
	\|  \mathbb{B} \|_{L(H, U)}^2 
	+ 
	2 
	\| B \|_{L(U,H_\beta)}  
	\| \mathbb{B} \|_{L(H_\beta, U)} 
	+ 
	\| B \|_{\HS(U,H)}^2  
	\big)
	\| X_{ \llcorner u \lrcorner_\theta, u} \|_{ H_\beta }
	\big\|_{\L^p( \P; \R ) }
	\,
	du
	\\
	&
	\leq   
	%
	(t-s)^{ \rho }
	\int_0^{ s }
	\|
	(-A)^{ \gamma + \rho - \beta  } e^{(s - \llcorner u \lrcorner_\theta)A}
	\|_{L(H)} 
	\\
	&
	\quad
	\cdot
	\big( 
	\|  \mathbb{B} \|_{L(H, U)}^2   
	+ 
	2 
	| \sup\nolimits_{ h \in \H } \values_h |^{- \beta}
	\| B \|_{\HS(U,H_\beta )}  
	\| \mathbb{B} \|_{L(H, U)}  
	+ 
	\| B \|_{\HS(U,H)}^2 
	\big)
	\| X_{ \llcorner u \lrcorner_\theta, u} 
	\|_{\L^p( \P; H_\beta ) } 
	\,
	du
	\\
	&
	\leq 
	4
	%
	(t-s)^{ \rho }
	\|  B \|_{\HS(U, H_\beta )}^2
	| \sup\nolimits_{ h \in \H } \values_h |^{- 2 \beta}  
	\\
	&
	\quad
	\cdot
	\int_0^{ s }
	\| X_{ \llcorner u \lrcorner_\theta, u} 
	\|_{\L^p( \P; H_\beta ) } 
	\|
	(-A)^{ \gamma + \rho - \beta  } e^{( s - \llcorner u \lrcorner_\theta)A}
	\|_{L(H)}
	\, du
	.
	\end{split} 
	\end{equation}
	The
	Burkholder-Davis-Gundy type inequality in Lemma~7.7 in Da Prato \& Zabczyk~\cite{dz92} 
	therefore proves 
	for every 
	$ s \in [0,T] $, $ t \in [s,T] $
	that
	\begin{equation}
	\begin{split}
	\label{eq:Needed2}
	& 
	\int_0^{ s }
	\Big\| 
	\big[
	e^{(t-\llcorner u \lrcorner_\theta)A}
	-
	e^{( s - \llcorner u \lrcorner_\theta)A}
	\big]
	\Big[
	\tfrac{ 4  
		\|  \mathbb{B}
		X_{\llcorner u \lrcorner_\theta, u}  \|_U^2
		X_{\llcorner u \lrcorner_\theta, u}
	}
	{(1 + \| X_{\llcorner u \lrcorner_\theta, u} \|_H^2)^3}
	-
	\tfrac{
		2 B  
		\mathbb{B}
		X_{\llcorner u \lrcorner_\theta, u}
		+ 
		\| B \|_{\HS(U,H)}^2 
		X_{\llcorner u \lrcorner_\theta, u}
	}{
		( 1 + \| X_{\llcorner u \lrcorner_\theta, u} \|_H^2 )^2
	}
	\Big]
	\Big\|_{\L^p( \P; H_\gamma ) }
	\, 
	du  
	\\
	&
	\leq
	4 
	%
	(t-s)^{ \rho }
	\|  B \|_{\HS(U, H_\beta )}^2 
	| \sup\nolimits_{ h \in \H } \values_h |^{- 2 \beta} 
	\\
	&
	\quad
	\cdot
	\int_0^{ s }
	\Big[
	\tfrac{ p ( p - 1 ) }{ 2 }
	\int_{ \llcorner u \lrcorner_\theta }^u 
	\| B \|_{\HS(U, H_\beta)}^2 \, dw
	\Big]^{ \nicefrac{1}{2} }
	\|
	(-A)^{ \gamma + \rho - \beta  } e^{( s - \llcorner u \lrcorner_\theta)A}
	\|_{L(H)}
	\, du  
	\\
	&
	\leq
	2
	\sqrt{2}
	p  
	(t-s)^{ \rho }  
	\| B \|_{\HS(U, H_\beta)}^3
	|
	\sup\nolimits_{ h \in \H } 
	\values_h 
	|^{ - 2 \beta }
	\\
	&
	\quad
	\cdot
	\int_0^{ s }
	( u - \llcorner u \lrcorner_\theta  )^{ \nicefrac{1}{2} } 
	\|
	(-A)^{ \gamma + \rho - \beta  } e^{(s - \llcorner u \lrcorner_\theta)A}
	\|_{L(H)}
	\, du
	\\
	&
	\leq 
	2
	\sqrt{2}
	p  
	[ | \theta |_T ]^{ \nicefrac{1}{2} } 
	(t-s)^{ \rho }
	\| B \|_{\HS(U, H_\beta)}^3
	| \sup\nolimits_{ h \in \H } \values_h |^{- 2 \beta}  
	\int_0^{ s }
	\tfrac{ du }
	{
		( s - \llcorner u \lrcorner_\theta)^{ \gamma + \rho - \beta } 
	}
	.
	\end{split}
	\end{equation}
	Moreover, note that for every 
	$ s \in [0,T] $ 
	it holds that
	\begin{equation}
	\begin{split}
	&
	\int_0^{ s }   
	(s - \llcorner u \lrcorner_\theta)^{  \beta -   \gamma - \rho }
	\, du
	\leq 
	\int_0^{ s }   
	( s - u )^{  \beta -  \gamma - \rho  }
	\, du 
	= 
	\tfrac{ s^{ 1 + \beta - \gamma - \rho } }
	{ 1 + \beta - \gamma - \rho }  
	\leq
	\tfrac{ T^{ 1 + \beta - \gamma - \rho} }{  1 + \beta - \gamma - \rho } 
	.
	\end{split}
	\end{equation}
	Estimate~\eqref{eq:Needed2}
	hence establishes
	for every 
	$ s \in [0,T] $, $ t \in [s,T] $ 
	that
	\begin{equation}
	\begin{split}
	\label{eq:Diff2}
	& 
	\int_0^{ s }
	\Big\|
	\big[
	e^{(t- \llcorner u \lrcorner_\theta)A}
	-
	e^{(s - \llcorner u \lrcorner_\theta)A}
	\big]
	\Big[ 
	\tfrac{ 4  
		\|  \mathbb{B}
		X_{\llcorner u \lrcorner_\theta, u}  \|_U^2
		X_{\llcorner u \lrcorner_\theta, u}
	}
	{(1 + \| X_{\llcorner u \lrcorner_\theta, u} \|_H^2)^3}
	-
	\tfrac{
		2 B  
		\mathbb{B} 
		X_{\llcorner u \lrcorner_\theta, u}
		+ 
		\| B \|_{\HS(U,H)}^2 
		X_{\llcorner u \lrcorner_\theta, u}
	}{
		( 1 + \| X_{\llcorner u \lrcorner_\theta, u} \|_H^2 )^2
	}
	\Big] 
	\Big\|_{\L^p( \P; H_\gamma ) } 
	\,
	du  
	\\
	&
	\leq
	2
	\sqrt{2}
	p   
	[ | \theta |_T ]^{ \nicefrac{1}{2} } 
	(t-s)^{ \rho }
	\| B \|_{\HS(U, H_\beta)}^3 
	| \sup\nolimits_{ h \in \H } \values_h |^{- 2 \beta}  
	\tfrac{  T^{ 1 + \beta - \gamma - \rho} }{ 1 + \beta - \gamma - \rho } 
	.
	\end{split}
	\end{equation}
	Combining~\eqref{eq:first_diff}--\eqref{eq:est3},
	the fact that
	$ \| B \|_{ \HS(U,H)}
	\leq | \sup_{ h \in \H } \values_h |^{-\beta} \| B \|_{\HS(U,H_\beta)} $,
	\eqref{eq:Diff1}, 
	\eqref{eq:Diff2},
	the fact that
	$ \sqrt{ 1 + 2 \beta - 2 \gamma } 
	\leq 1 + \beta - \gamma $,
	the fact that
	$ \sqrt{ 1 + 2 ( \beta - \gamma - \rho ) } 
	\leq 1 + \beta - \gamma - \rho $,
	and the fact that
	$ \frac{ 1 }{ \sqrt{2} } + \frac{ p }{ \sqrt{ 2 } } \| B \|_{\HS(U, H_\beta)}
	+
	2 \sqrt{2} \| B \|_{ \HS(U,H_\beta)}^2 
	\leq
	2 ( 1 + \| B \|_{ \HS(U,H_\beta)}^2  ) p $
	implies that for every 
	$ s \in [0,T] $, $ t \in [s,T] $
	it holds that
	\begin{equation}
	\begin{split}
	&
	( \E[\| \O_t - \O_{ s }
	\|_{ H_\gamma }^p ] )^{ \nicefrac{1}{p}}
	\leq
	\tfrac{p   
		\| B \|_{\HS(U,H_\beta)} 
		\max \{ 
		| \sup\nolimits_{ h \in \H } \values_h |^{-2\beta},
		1 \}
	}{ \sqrt{ 2( 1 + 2 \beta - 2 \gamma ) } }
	\\
	&
	\quad 
	\cdot
	\big( 
	(t-s)^{ \nicefrac{1}{2} + \beta - \gamma } 
	+ 
	4 p^2
	\| B \|_{ \HS(U, H_\beta) }^2 
	| \theta |_T
	(t-s)^{ \nicefrac{1}{2} + \beta - \gamma}
	%
	+
	4 
	\| B \|_{\HS(U,H_\beta)}^2 
	[ |\theta|_T ]^{\nicefrac{1}{2}}
	(t-s)^{ 1 + \beta - \gamma}
	\big)
	\\
	&
	\quad
	+
	\tfrac{ p }{ \sqrt{ 2 } } 
	(t-s)^{   \rho }
	\| B \|_{\HS(U, H_\beta)} 
	\big( 
	1
	+
	p 
	\sqrt{T} 
	| \sup\nolimits_{ h \in \H } \values_h |^{-\beta}
	\| B \|_{ \HS(U, H_\beta ) }  
	\big) 
	\tfrac{  
		T^{ \nicefrac{1}{2} + \beta - \gamma - \rho } 
	}{
		\sqrt{  1 + 2 ( \beta - \gamma - \rho )  } } 
	\\
	&
	\quad
	+
	2
	\sqrt{2} 
	p    
	[ | \theta |_T ]^{ \nicefrac{1}{2} } 
	(t-s)^{ \rho }
	\| B \|_{\HS(U, H_\beta)}^3 
	| \sup\nolimits_{ h \in \H } \values_h |^{- 2 \beta} 
	\tfrac{  
		T^{ 1 + \beta - \gamma - \rho} 
	}{ 1 + \beta - \gamma - \rho  }
	\\
	&
	\leq
	\tfrac{ p  
		\| B \|_{\HS(U,H_\beta)} 
		\max \{ | \sup\nolimits_{ h \in \H } \values_h |^{-2\beta}, 1 \}
		\max \{ T, 1 \}
	}{ \sqrt{ 2 ( 1 + 2 \beta - 2 \gamma ) } }
	\big(
	1   
	+ 
	4 p^2
	\| B \|_{ \HS(U,H_\beta) }^2  
	+
	4 
	\| B \|_{\HS(U, H_\beta)}^2  
	\big)
	\\
	&
	\quad
	\cdot 
	(t-s)^{ \nicefrac{1}{2} + \beta - \gamma }
	%
	%
	+
	p 
	(t-s)^\rho
	\| B \|_{\HS(U, H_\beta)} 
	\max \big \{ 
	| \sup\nolimits_{ h \in \H } \values_h |^{- 2\beta},
	1
	\big\}  
	\\
	&
	\quad 
	\cdot
	\big(
	\tfrac{ 1 }{ \sqrt{ 2 } }
	+
	\tfrac{ p }{ \sqrt{ 2 } }
	\| B \|_{ \HS(U, H_\beta)}
	+
	2
	\sqrt{2}
	\| B \|_{ \HS(U, H_\beta)}^2
	\big)
	\tfrac{ [\max \{ T, 1 \}]^{ \nicefrac{3}{2} + \beta } }{  \sqrt{ 1 + 2(\beta - \gamma - \rho ) } }
	\\
	&
	\leq
	\tfrac{p^3  
		\| B \|_{\HS(U,H_\beta)}
		\max \{  | \sup\nolimits_{ h \in \H } \values_h |^{-2\beta}, 1 \}
		\max \{ T, 1 \} }{ \sqrt{ 2
			(  1 + 2 ( \beta - \gamma - \rho ) ) } }
	\big(
	1   
	+  
	8
	\| B \|_{\HS(U, H_\beta)}^2 
	\big)
	(t-s)^{ \nicefrac{1}{2} + \beta - \gamma } 
	%
	%
	\\
	&
	\quad
	+
	2
	p^2  
	(t-s)^\rho 
	\| B \|_{\HS(U, H_\beta)} 
	\max 
	\big\{
	| \sup\nolimits_{ h \in \H } \values_h |^{- 2\beta},
	1 
	\big\}
	\big(
	1
	+ 
	\| B \|_{ \HS(U,H_\beta)}^2
	\big)
	\tfrac{ [\max \{ T, 1 \}]^{ \nicefrac{3}{2} + \beta } }{  \sqrt{ 1 + 2(\beta - \gamma - \rho ) } }
	\\
	&
	\leq
	\tfrac{3 p^3 
		\| B \|_{\HS(U,H_\beta)}
		[\max \{ T, 1 \} ]^{
			\nicefrac{3}{2} + \beta	
		}
		\max 
		\{
		| \sup\nolimits_{ h \in \H } \values_h |^{- 2 \beta},
		1 \}
		(
		1   
		+  
		8
		\| B \|_{\HS(U, H_\beta)}^2 
		)
	}{ \sqrt{  1 + 2 ( \beta - \gamma - \rho )  } }
	(t-s)^\rho 
	.
	\end{split}
	\end{equation}
	The proof of Lemma~\ref{lemma:noise_diff}
	is thus completed.
\end{proof}
\subsection[Error estimates for temporal approximations of stochastic convolutions]{Error estimates for temporally semi-discrete approximations of stochastic convolutions}
\label{subsection:Error}
In this subsection we
combine  
Lemma~\ref{lemma:Ito_representation}
and
Lemma~\ref{lemma:some_helping_estimates}
above to establish in Lemma~\ref{lemma:error_exact_counterpart} below for every $ p \in [2,\infty) $ an upper bound for the strong $ L^p $-distance between the approximation process 
$ \O \colon [0,T] \times \Omega \to H_\gamma $
from
Setting~\ref{setting:Strengthened_strong_a_priori_moment_bounds} 
 and a suitable stochastic convolution process related to 
 $ \O \colon [0,T] \times \Omega \to H_\gamma $.
\begin{lemma}
	\label{lemma:error_exact_counterpart}
	Assume Setting~\ref{setting:Strengthened_strong_a_priori_moment_bounds}, 
	let  
	$ C \in [1, \infty) $,
	$ \tilde B \in \HS(U, H_\beta) $,
	$ p \in [2, \infty) $,   
	$ \eta \in [0, \nicefrac{1}{2} + \beta - \gamma ) $, 
    $ \rho \in [0, \nicefrac{1}{2} + \beta - \gamma) \cap [0, \nicefrac{1}{2} ) $,
	assume for every 
	$ s \in [0,T] $ 
	that
	$ 	\| \chi_{ \llcorner s \lrcorner_\theta }  
	-
	1 
	\|_{\L^{p}(\P; \R)} \leq C [ | \theta |_T ]^{ \rho } $,
	and
	let
	$ O \colon [0,T] \times  \Omega \to H_\gamma $
	be a stochastic process which satisfies for every
	$ t \in [0,T] $ that
	$ [O_t]_{\P, \mathcal{B}(H_\gamma)} = 
	\int_0^t 
	e^{(t-s)A} 
	\tilde B 
	\, 
	dW_s $.
	Then  
	\begin{equation}
	\begin{split}
	\sup\nolimits_{t\in [0,T]} 
	\|  \O_t -  O_t  \|_{ \L^p( \P; H_\gamma ) }
 	&
	\leq 
	\tfrac{ p  
		[ \max \{ T, 1 \} ]^{\nicefrac{1}{2} + \beta } } { \sqrt{2 (1 - 2 \max \{ 0, \gamma + \eta - \beta \} )} } 
	\| 
	(-A)^{ \min \{ 0, \gamma + \eta - \beta \} }  
	\|_{L(H)} 
	\|  \tilde B - B \|_{\HS(U, H_{\beta-\eta})} 
	\\
	&
	\quad 
	+
	\tfrac{ 8 p^3  C  
		[\max\{T, 1\}]^{ \nicefrac{3}{2} + \beta  }   
	}{\sqrt{  1 - 2 \rho - 2 \max \{ 0, \gamma - \beta \}  } 
	}
	\| B \|_{\HS(U, H_{\beta})}
	\| ( -A )^{ \min \{ 0, \gamma - \beta \} } \|_{L(H)}
	\\
	&
	\quad
	\cdot
	\big(
	1
	+ 
	| \sup\nolimits_{h \in \H} \values_h |^{-2\beta}
	\| B \|_{\HS(U,H_{\beta})}^2
	\big)
	[ |\theta|_T ]^{\rho }
	.
	\end{split}
	\end{equation}
\end{lemma}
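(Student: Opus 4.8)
The plan is to compare $\O$ with the genuine stochastic convolution $\hat O_t:=\int_0^t e^{(t-u)A}B\,dW_u$ (with a continuous modification) driven by $B$ instead of $\tilde B$, and to write $\|\O_t-O_t\|_{\L^p(\P;H_\gamma)}\le\|\O_t-\hat O_t\|_{\L^p(\P;H_\gamma)}+\|\hat O_t-O_t\|_{\L^p(\P;H_\gamma)}$. The second summand is a pure noise--coefficient error: since $\hat O_t-O_t=\int_0^t e^{(t-u)A}(B-\tilde B)\,dW_u$, the Burkholder--Davis--Gundy type inequality in Lemma~7.7 in Da Prato \& Zabczyk~\cite{dz92}, together with the factorisation $(-A)^\gamma e^{(t-u)A}=(-A)^{\gamma+\eta-\beta}e^{(t-u)A}(-A)^{\beta-\eta}$, the smoothing estimate $\|(-A)^{\gamma+\eta-\beta}e^{(t-u)A}\|_{L(H)}\le\|(-A)^{\min\{0,\gamma+\eta-\beta\}}\|_{L(H)}(t-u)^{-\max\{0,\gamma+\eta-\beta\}}$ (integrable in $u$ since $\eta<\nicefrac{1}{2}+\beta-\gamma$), and $\int_0^t (t-u)^{-2\max\{0,\gamma+\eta-\beta\}}\,du\le\tfrac{[\max\{T,1\}]^{1+2\beta}}{1-2\max\{0,\gamma+\eta-\beta\}}$, produces exactly the first summand of the asserted bound.

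For $\|\O_t-\hat O_t\|_{\L^p(\P;H_\gamma)}$ I would use the mild It\^o representation of $\O_t$ from Lemma~\ref{lemma:Ito_representation} (introducing the auxiliary processes $X_{s,\cdot}$ as there), which gives $\O_t-\hat O_t=J(t)+D(t)$, where $D(t)$ is the drift of Lemma~\ref{lemma:Ito_representation} and $J(t)$ is the stochastic integral whose integrand is $\chi_{\llcorner u\lrcorner_\theta}e^{(t-\llcorner u\lrcorner_\theta)A}\big[\tfrac{B}{1+\|X\|_H^2}-\tfrac{2X\langle X,B(\cdot)\rangle_H}{(1+\|X\|_H^2)^2}\big]-e^{(t-u)A}B$ (writing $X$ for $X_{\llcorner u\lrcorner_\theta,u}$). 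I would bound $D(t)$ by \eqref{eq:Estimate3} of Lemma~\ref{lemma:some_helping_estimates}, applied with $\eta$ there replaced by $\max\{\gamma,\beta\}$ and $s=0$, then passing from $H_{\max\{\gamma,\beta\}}$ to $H_\gamma$ via $\|(-A)^{\min\{0,\gamma-\beta\}}\|_{L(H)}$ and using $[|\theta|_T]^{1/2}\le T^{1/2-\rho}[|\theta|_T]^\rho$. For $J(t)$ I would apply Burkholder--Davis--Gundy and split the integrand into a taming piece $\chi_{\llcorner u\lrcorner_\theta}e^{(t-\llcorner u\lrcorner_\theta)A}B\big(\tfrac{1}{1+\|X\|_H^2}-1\big)$, a truncation piece $(\chi_{\llcorner u\lrcorner_\theta}-1)e^{(t-\llcorner u\lrcorner_\theta)A}B$, a semigroup--discretisation piece $(e^{(t-\llcorner u\lrcorner_\theta)A}-e^{(t-u)A})B$, and the It\^o--correction piece $-\chi_{\llcorner u\lrcorner_\theta}e^{(t-\llcorner u\lrcorner_\theta)A}\tfrac{2X\langle X,B(\cdot)\rangle_H}{(1+\|X\|_H^2)^2}$. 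For the taming and It\^o--correction pieces one uses $\tfrac{\|X\|_H^2}{1+\|X\|_H^2}\le\|X\|_H^2$, $\|\langle X,B(\cdot)\rangle_H\|_{\HS(U,\R)}=\|\mathbb{B}X\|_U\le\|B\|_{\HS(U,H)}\|X\|_H$ and the moment bounds $\|X\|_{\L^{2p}(\P;H_\beta)}+\|X\|_{\L^{2p}(\P;H)}\le 2p\max\{|\sup_{h\in\H}\values_h|^{-\beta},1\}\|B\|_{\HS(U,H_\beta)}(u-\llcorner u\lrcorner_\theta)^{1/2}$ (again Burkholder--Davis--Gundy), which produce the factor $|\sup_{h\in\H}\values_h|^{-2\beta}\|B\|_{\HS(U,H_\beta)}^2$ and a rate $(u-\llcorner u\lrcorner_\theta)\le|\theta|_T\le T^{1-\rho}[|\theta|_T]^\rho$; for the truncation piece one uses the hypothesis $\|\chi_{\llcorner s\lrcorner_\theta}-1\|_{\L^p(\P;\R)}\le C[|\theta|_T]^\rho$; and in every piece one passes from $H_\beta$ to $H_\gamma$ by placing a power of $-A$ in front of the semigroup via $\|(-A)^{\gamma-\beta}e^{\tau A}\|_{L(H)}\le\|(-A)^{\min\{0,\gamma-\beta\}}\|_{L(H)}\tau^{-\max\{0,\gamma-\beta\}}$.

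The delicate point, and the step I expect to be the main obstacle, is the semigroup--discretisation piece together with the matching of the numerical constants: writing $(e^{(t-\llcorner u\lrcorner_\theta)A}-e^{(t-u)A})B=(e^{(u-\llcorner u\lrcorner_\theta)A}-\operatorname{Id}_H)e^{(t-u)A}B$ and distributing the fractional powers as $(-A)^{\gamma-\beta}(e^{(u-\llcorner u\lrcorner_\theta)A}-\operatorname{Id}_H)(-A)^{-\rho}\cdot(-A)^\rho e^{(t-u)A}$ when $\gamma\le\beta$, and as $(-A)^{\gamma-\beta+\rho}e^{(t-u)A}\cdot(-A)^{-\rho}(e^{(u-\llcorner u\lrcorner_\theta)A}-\operatorname{Id}_H)$ when $\gamma>\beta$, one obtains in both cases the $L(H)$-bound $\|(-A)^{\min\{0,\gamma-\beta\}}\|_{L(H)}(u-\llcorner u\lrcorner_\theta)^\rho(t-u)^{-\rho-\max\{0,\gamma-\beta\}}$, in which $\rho+\max\{0,\gamma-\beta\}<\nicefrac{1}{2}$ is exactly the assumption $\rho\in[0,\nicefrac{1}{2}+\beta-\gamma)\cap[0,\nicefrac{1}{2})$; the time integral $\int_0^t(t-u)^{-2\rho-2\max\{0,\gamma-\beta\}}\,du\le\tfrac{[\max\{T,1\}]^{1+2\beta}}{1-2\rho-2\max\{0,\gamma-\beta\}}$ then supplies the stated denominator, and the milder integrals in the taming, truncation, It\^o--correction and drift pieces (with exponent $2\max\{0,\gamma-\beta\}<1$) are dominated by the same quantity since $1-2\rho-2\max\{0,\gamma-\beta\}\le1-2\max\{0,\gamma-\beta\}$. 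Finally I would sum the five contributions, absorb all $T$-powers into $[\max\{T,1\}]^{3/2+\beta}$, bound the numerical constants by $8$ using $C\ge1$ and $p\ge2$, and extract $1+|\sup_{h\in\H}\values_h|^{-2\beta}\|B\|_{\HS(U,H_\beta)}^2$ out of the various $\|B\|_{\HS(U,H_\beta)}$- and $\|B\|_{\HS(U,H_\beta)}^3$-type factors; combined with the first paragraph this yields the claim, every remaining step being a routine application of Burkholder--Davis--Gundy, analyticity of $e^{tA}$, and the moment bounds for $\int B\,dW$ already used in Lemmas~\ref{lemma:some_helping_estimates}--\ref{lemma:noise_diff}.
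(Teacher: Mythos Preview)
Your proposal is correct and follows essentially the same route as the paper's proof: the paper also uses Lemma~\ref{lemma:Ito_representation} to split $\O_t-O_t$ into the $(B-\tilde B)$ noise-coefficient term, the drift term (handled via \eqref{eq:Estimate3} with $\eta=\max\{\gamma,\beta\}$), and the stochastic integral, the latter being telescoped into taming, semigroup-discretisation, truncation, and It\^o-correction pieces, each estimated exactly as you describe. The only cosmetic differences are that the paper telescopes in a slightly different order (placing $\chi$ on the semigroup piece and $e^{(t-u)A}$ on the truncation piece rather than your $e^{(t-\llcorner u\lrcorner_\theta)A}$) and handles both cases $\gamma\lessgtr\beta$ uniformly via $\max/\min$ rather than your explicit case split; neither affects the argument or the constants.
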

\begin{proof}[Proof
	of Lemma~\ref{lemma:error_exact_counterpart}]
	Throughout this proof 
	for every  $ s \in [0,T] $
	let 
	$ X_{s, (\cdot) }(\cdot) 
	=
	( X_{s,t}(\omega) )_{ (t,\omega)\in[s,T]\times\Omega }
	\colon $ 
	$ [s,T] \times \Omega
	\to H_\beta $
	be an
	$ ( \f_t )_{ t \in [s,T] } $-adapted
	stochastic process with continuous sample paths
	which satisfies for every 
	$ t \in [s,T] $
	that
	$ [ X_{s,t} ]_{\P, \mathcal{B}(H_\beta) }
	=
	\int_s^t B \, dW_u $.  
	Observe that
	Lemma~\ref{lemma:Ito_representation} 
	(with 
	$ X_{s, t} = X_{s, t} $
	for
	$ t \in [s,T] $,
	$ s \in [0,T] $ 
	in the notation of
	Lemma~\ref{lemma:Ito_representation}) 
	and the triangle inequality prove 
	for every $ t \in [0,T] $ that
	\begin{equation}
	\begin{split}
	\label{eq:basic_triangle}
	& 
	\|  \O_t -  O_t  \|_{ \L^p( \P; H_\gamma ) }
	\leq
	\Big\|
	\int_0^t
	e^{ (t-u) A }
	( B - \tilde B )
	\, dW_u
	\Big\|_{L^p(\P; H_\gamma)}
	\\
	&
	+
	\Big\|
	\int_0^t 
	\Big(
	\chi_{\llcorner u \lrcorner_\theta} 
	e^{(t- \llcorner u \lrcorner_\theta)A}
	\Big(
	\tfrac{B  }{1+\| X_{{\llcorner u \lrcorner_\theta}, u} \|_H^2}
	-
	\tfrac{2 X_{{\llcorner u \lrcorner_\theta}, u} \langle X_{{\llcorner u \lrcorner_\theta}, u}, B  (\cdot) \rangle_H}
	{ ( 1 + \| X_{{\llcorner u \lrcorner_\theta}, u} \|_H^2 )^2 } 
	\Big)
	- 
	e^{(t-u)A}  
	B  
	\Big)
	\, 
	dW_u
	\Big\|_{L^p(\P; H_\gamma)}
	\\
	& 
	+
	\Big\|
	\int_0^t
	\chi_{\llcorner u \lrcorner_\theta} 
	e^{(t-\llcorner u \lrcorner_\theta)A}
	\Big( 
	\tfrac{ 4  
		\|  \mathbb{B}
		X_{\llcorner u \lrcorner_\theta, u}  \|_U^2
	X_{\llcorner u \lrcorner_\theta, u} 
}
	{(1 + \| X_{\llcorner u \lrcorner_\theta, u} \|_H^2)^3}
	-
	\tfrac{
		2 B   
	\mathbb{B}
		X_{\llcorner u \lrcorner_\theta, u} 
		+ 
		\| B  \|_{\HS(U,H)}^2 
		X_{\llcorner u \lrcorner_\theta, u} 
	}{
		( 1 + \| X_{\llcorner u \lrcorner_\theta, u} \|_H^2 )^2
	}
	\Big)
	\, 
	du
	\Big\|_{\L^p(\P; H_\gamma)}
	.
	\end{split}
	\end{equation}
	In the next step we
	note that
	the fact that
	$ \eta < \nicefrac{1}{2} + \beta - \gamma $
	ensures that
	$ \max \{ \gamma + \eta - \beta, 0 \} < \nicefrac{1}{2} $.
	The
	Burkholder-Davis-Gundy type inequality in Lemma~7.7 in Da Prato \& Zabczyk~\cite{dz92}
	hence shows that 
	for every $ t \in [0,T] $ it holds that
	\begin{equation}
	\begin{split}
	\label{eq:B_defect}
	&
	\Big\|
	\int_0^t
	e^{ (t-u) A }
	( B - \tilde B )
	\, dW_u
	\Big\|_{L^p(\P; H_\gamma)}
	\leq
	\tfrac{ \sqrt{p(p-1)} } { \sqrt{2} } 
	\Big(
	\int_0^t
	\| e^{(t-u)A} ( \tilde B - B ) \|_{\HS(U, H_\gamma)}^2
	\, du
	\Big)^{ \nicefrac{1}{2} }
	\\
	&
	\leq
	\tfrac{ \sqrt{p(p-1)} } { \sqrt{2} } 
\| \tilde B - B \|_{\HS(U, H_{ \beta - \eta })} 
	\Big(
	\int_0^t
	\| (-A)^{ \gamma + \eta - \beta } e^{(t-u)A} \|_{L(H)}^2 
	\, du
	\Big)^{ \nicefrac{1}{2} }
	\\
	&
	\leq
	\tfrac{ \sqrt{p(p-1)} } { \sqrt{2} } 
	\| \tilde B - B \|_{\HS(U, H_{ \beta - \eta })} 
	\Big(
	\int_0^t
	\| 
	(-A)^{ \min \{ 0, \gamma + \eta - \beta \} }  
	\|_{L(H)}^2 
	\| 
	(-A)^{ \max \{ 0, \gamma + \eta - \beta \} } 
	e^{(t-u)A} 
	\|_{L(H)}^2 
	\, du
	\Big)^{ \nicefrac{1}{2} }
	\\
	&
	\leq
	\tfrac{ \sqrt{p(p-1)} } { \sqrt{2} } 
	\|  \tilde B - B  \|_{\HS(U, H_{ \beta - \eta })} 
	\| 
	(-A)^{ \min \{ 0, \gamma + \eta - \beta \} }  
	\|_{L(H)} 
	\Big( 
	\int_0^t
	(t-u)^{ - 2 \max \{ 0, \gamma + \eta - \beta \} } 
	\, du 
	\Big)^{ \nicefrac{1}{2} }
	\\
	&
	=
	\tfrac{ \sqrt{p(p-1)} } { \sqrt{2} }
	%
	\|  \tilde B - B \|_{\HS(U, H_{\beta - \eta} )}
	\| 
	(-A)^{ \min \{ 0, \gamma + \eta - \beta \} }  
	\|_{L(H)}  
	\tfrac{ t^{ \nicefrac{1}{2} - \max \{ 0, \gamma + \eta - \beta \} } } { \sqrt{ 1 - 2 \max \{ 0, \gamma + \eta - \beta \} } } 
	.
	\end{split}
	\end{equation}
	Furthermore, observe that the triangle inequality
	implies for every $ t \in [0,T] $ that
	\begin{equation}
	\begin{split}
	\label{eq:basic_triangle2}
	&
	\Big\|
	\int_0^t 
	\Big(
	\chi_{\llcorner u \lrcorner_\theta}
	e^{(t - \llcorner u \lrcorner_\theta)A}
	\Big[
	\tfrac{B  }{1+\| X_{{\llcorner u \lrcorner_\theta}, u} \|_H^2}
	-
	\tfrac{2 X_{{\llcorner u \lrcorner_\theta}, u} \langle X_{{\llcorner u \lrcorner_\theta}, u}, B  (\cdot) \rangle_H}
	{ ( 1 + \| X_{{\llcorner u \lrcorner_\theta}, u} \|_H^2 )^2 } 
	\Big]
	- 
	e^{(t - u)A}  
	B  
	\Big)
	\, 
	dW_u
	\Big\|_{L^p(\P; H_\gamma)}
	\\
	&
	\leq
	\Big\|
	\int_0^t 
	\Big(
	\chi_{\llcorner u \lrcorner_\theta} 
	e^{(t-\llcorner u \lrcorner_\theta)A} 
	\,
	\tfrac{B  }{1+\| X_{{\llcorner u \lrcorner_\theta}, u} \|_H^2}
	- 
	e^{(t - u)A}  
	B  
	\Big)
	\, 
	dW_u 
	\Big\|_{L^p(\P; H_\gamma)}
	\\
	&
	\quad
	+
	\Big\|
	\int_0^t 
	\chi_{\llcorner u \lrcorner_\theta} 
	e^{(t-\llcorner u \lrcorner_\theta)A} 
	\,
	\tfrac{2 X_{{\llcorner u \lrcorner_\theta}, u} \langle X_{{\llcorner u \lrcorner_\theta}, u}, B
		\, 
		dW_u  
		\rangle_H}
	{ ( 1 + \| X_{{\llcorner u \lrcorner_\theta}, u} \|_H^2 )^2 } 
	\Big\|_{L^p(\P; H_\gamma)}
	.
	\end{split}
	\end{equation}
	In addition, note that the triangle inequality 
	assures for every 
	$ t \in [0,T] $ 
	that
	\begin{equation}
	\begin{split}
	\label{eq:triangle+burkholder_estimate_error_stochastic}
	&
	\Big\|
	\int_0^t 
	\Big(
	\chi_{\llcorner u \lrcorner_\theta} 
	e^{(t-\llcorner u \lrcorner_\theta)A} 
	\,
	\tfrac{B  }{1+\| X_{{\llcorner u \lrcorner_\theta}, u} \|_H^2}
	- 
	e^{(t-u)A} 
	B 
	\Big)
	\, 
	dW_u 
	\Big\|_{L^p(\P; H_\gamma)}
	\\
	&
	\leq
	\Big\|
	\int_0^t 
	\chi_{\llcorner u \lrcorner_\theta} 
	e^{(t-\llcorner u \lrcorner_\theta)A}
	\Big[
	\tfrac{B  }{1+\| X_{{\llcorner u \lrcorner_\theta}, u} \|_H^2}
	-
	B 
	\Big]
	\, 
	dW_u 
	\Big\|_{L^p(\P; H_\gamma)}
	\\
	&
	\quad
	+
	\Big\|
	\int_0^t
	\chi_{ \llcorner u \lrcorner_\theta } 
	\big(
	e^{(t- \llcorner u \lrcorner_\theta)A} 
	- e^{(t-u)A} 
	\big)
	B 
	\, 
	dW_u
	\Big\|_{L^p(\P; H_\gamma)}
	\\
	&
	\quad
	+
	\Big\|
	\int_0^t
	 (
	\chi_{ \llcorner u \lrcorner_\theta } 
	-1
	 )
	e^{(t-u)A}   
	B 
	\,
	dW_u
	\Big\|_{L^p(\P; H_\gamma)}
	.
	\end{split}
	\end{equation}
	Moreover, observe that
	the  
	Burkholder-Davis-Gundy type inequality in Lemma~7.7 in Da Prato \& Zabczyk~\cite{dz92} 
	demonstrates that for every 
	$ t \in [0,T] $
	it holds that
	\begin{equation}
	\begin{split}
	&
	\Big\|
	\int_0^t 
	\chi_{ \llcorner u \lrcorner_\theta }
	e^{(t- \llcorner u \lrcorner_\theta )A}
	\Big[
	\tfrac{B  }{1+\| X_{{ \llcorner u \lrcorner_\theta }, u} \|_H^2}
	-
	B 
	\Big]
	\, 
	dW_u 
	\Big\|_{L^p(\P; H_\gamma)} 
	\\
	&
	\leq 
	\tfrac{\sqrt{p(p-1)}}{ \sqrt{2} }
	\Big(
	\int_0^t  
	\Big\|  
	e^{(t-\llcorner u \lrcorner_\theta)A}
	\tfrac{ 
		\| X_{\llcorner u \lrcorner_\theta, u} \|_H^2
		B
	}{1+\| X_{\llcorner u \lrcorner_\theta, u} \|_H^2}  
	\Big\|_{\L^p(\P; \HS(U,H_\gamma))}^2
	\, 
	du
	\Big)^{\nicefrac{1}{2}}
	\\
	&
	\leq 
	\tfrac{\sqrt{p(p-1)}}{ \sqrt{2} }
	\Big(
	\int_0^t  
	\|  
	(-A)^{ \gamma - \beta }
	e^{(t-\llcorner u \lrcorner_\theta)A}
	\|_{L(H)}^2
	\Big\|
	\tfrac{ 
		\| X_{\llcorner u \lrcorner_\theta, u} \|_H^2
		B
	}{1+\| X_{\llcorner u \lrcorner_\theta, u} \|_H^2}  
	\Big\|_{\L^p(\P; \HS(U,H_\beta))}^2
	\, 
	du
	\Big)^{\nicefrac{1}{2}}
	\\
	&
	\leq
	\tfrac{\sqrt{p(p-1)}}{ \sqrt{2} }
	\Big(
	\int_0^t 
	\|  
	(-A)^{ \gamma - \beta }
	e^{(t-\llcorner u \lrcorner_\theta)A}
	\|_{L(H)}^2
	\| X_{\llcorner u \lrcorner_\theta, u} \|_{\L^{2p}(\P; H)}^4
	\|  
	B 
	\|_{ \HS(U, H_\beta) }^2
	\, 
	du
	\Big)^{\nicefrac{1}{2}}
	.
	\end{split}
	\end{equation}
	The
	Burkholder-Davis-Gundy type inequality in Lemma~7.7 in Da Prato \& Zabczyk~\cite{dz92}
	and the fact that
	$ \| B \|_{\HS(U,H)} 
	\leq
	| \sup_{ h \in \H } \values_h |^{-\beta} 
	\| B \|_{\HS(U,H_\beta)} $ 
	hence ensure that for every $ t \in [0,T] $ 
	it holds that
	\begin{equation}
	\begin{split} 
	\label{eq:estimate_first0}
	&
	\Big\|
	\int_0^t 
	\chi_{\llcorner u \lrcorner_\theta}
	e^{(t-\llcorner u \lrcorner_\theta)A}
	\Big[
	\tfrac{B  }{1+\| X_{{\llcorner u \lrcorner_\theta}, u} \|_H^2}
	-
	B 
	\Big]
	\, 
	dW_u 
	\Big\|_{L^p(\P; H_\gamma)}
	%
	%
	\\
	&
	\leq
	\tfrac{\sqrt{p(p-1)}}{ \sqrt{2} }
	\Big(
	\int_0^t 
\|  
(-A)^{ \gamma - \beta }
e^{(t-\llcorner u \lrcorner_\theta)A}
\|_{L(H)}^2
	\|  
	B 
	\|_{  \HS(U, H_{\beta}) }^2
	\Big(
	p ( 2p - 1 )
	\smallint_{\llcorner u \lrcorner_\theta}^u
	\| B  \|_{  \HS(U,H)  }^2
	\, ds
	\Big)^2
	\, 
	du
	\Big)^{\nicefrac{1}{2}}
	\\
	&
	\leq
	\sqrt{2} p^3
	| \sup\nolimits_{ h \in \H } \values_h |^{-2\beta} 
	\|  
	B 
	\|_{ \HS(U, H_{\beta }) }^3
	\Big(
	\int_0^t 
\|  
(-A)^{ \gamma - \beta }
e^{(t-\llcorner u \lrcorner_\theta)A}
\|_{L(H)}^2
	( u - \llcorner u \lrcorner_\theta )^2 
	\, 
	du
	\Big)^{\nicefrac{1}{2}}
	\\
	&
	\leq 
	\sqrt{2} p^3 
| \sup\nolimits_{ h \in \H } \values_h |^{-2\beta} 
\|  
B 
\|_{ \HS(U, H_{\beta }) }^3
\|  
(-A)^{ \min \{0, \gamma - \beta \} } 
\|_{L(H)}
\\
&
\quad 
\cdot 
\Big(
\int_0^t 
\|  
(-A)^{ \max \{0, \gamma - \beta \} }
e^{(t-\llcorner u \lrcorner_\theta)A}
\|_{L(H)}^2
( u - \llcorner u \lrcorner_\theta )^2 
\, 
du
\Big)^{\nicefrac{1}{2}}	
	\\
	&
	\leq
		\sqrt{2} p^3 
	| \sup\nolimits_{ h \in \H } \values_h |^{-2\beta} 
	\|  
	B 
	\|_{ \HS(U, H_{\beta }) }^3
	\|  
	(-A)^{ \min \{0, \gamma - \beta \} } 
	\|_{L(H)}
	\\
	&\quad 
	\cdot 
	| \theta |_T 
	\Big ( 
	\int_0^t 
	(t - \llcorner u \lrcorner_\theta )^{ 
- 2 \max \{ 0, \gamma - \beta \}	
 } 
	\, 
	du 
	\Big )^{\nicefrac{1}{2}}
	.
	\end{split} 
	\end{equation} 
	Furthermore, observe that
	the fact that
	$ 1 - 2 \max \{ 0, \gamma - \beta \} = 
	\max \{ 1, 1 - 2 \gamma + 2 \beta \} > 0 $
	ensures that
	for every
	$ t \in [0,T] $ it holds that 
	\begin{equation}
	\begin{split}  
	&
	\int_0^t 
	(t - \llcorner u \lrcorner_\theta )^{ 
		- 2 \max \{ 0, \gamma - \beta \}	
	}
	\, 
	du
	\leq 
	\int_0^t 
	(t - u)^{  
			- 2 \max \{ 0, \gamma - \beta \}	
	}  
	\, 
	du 
= 
	\tfrac{  
		t^{ 1  - 2 \max \{ 0, \gamma - \beta \}	
		}  
	}
	{ 1 - 2 \max \{ 0, \gamma - \beta \}	 }  
	.
	\end{split}
	\end{equation} 
	Combining this and~\eqref{eq:estimate_first0} 
	implies that for every
	$ t \in [0,T] $ it holds that
	\begin{equation}
	\begin{split} 
	\label{eq:estimate_first}
	&
	\Big\|
	\int_0^t 
	\chi_{\llcorner u \lrcorner_\theta}
	e^{(t-\llcorner u \lrcorner_\theta)A}
	\Big[
	\tfrac{B  }{1+\| X_{{\llcorner u \lrcorner_\theta}, u} \|_H^2}
	-
	B 
	\Big]
	\, 
	dW_u 
	\Big\|_{L^p(\P; H_\gamma)}
	\\
	&
	\leq
	\sqrt{2} p^3  
	| \sup\nolimits_{ h \in \H } \values_h |^{-2\beta} 
	\|  
	B 
	\|_{ \HS(U, H_{\beta }) }^3
	\|  
	(-A)^{ \min \{0, \gamma - \beta \} } 
	\|_{L(H)}
	\tfrac{  
		t^{ \nicefrac{1}{2} - \max \{ 0, \gamma - \beta \}	
		}  
	}
	{ \sqrt{ 1 - 2 \max \{ 0, \gamma - \beta \}	} }
	|\theta|_T
	.
	\end{split}
	\end{equation}
	In the next step we note that
	the fact that
	$ 1 - 2 \rho - 2 \max \{ 0, \gamma - \beta \} 
	= 1 - 2 \rho 
	+ 2 \min \{ 0, \beta - \gamma \} > 0 $
	and
	the  
	Burkholder-Davis-Gundy type inequality in Lemma~7.7 in Da Prato \& Zabczyk~\cite{dz92}
	show  that for every 
	$ t \in [0,T] $ it holds that
	\begin{equation}
	\begin{split}
	\label{eq:estimate_second}
	&
	\Big\|
	\int_0^t
	\chi_{ \llcorner u \lrcorner_\theta }
	(
	e^{(t- \llcorner u \lrcorner_\theta )A} 
	- e^{(t-u)A} 
	)
	B  
	\, 
	dW_u
	\Big\|_{L^p(\P; H_\gamma)}
	\\
	&
	\leq
	\tfrac{\sqrt{p(p-1)}}{\sqrt{2}}
	\Big(
	\int_0^t 
	\| 
	(
	e^{(t- \llcorner u \lrcorner_\theta)A} 
	- e^{(t-u)A} 
	)
	B  
	\|_{ \HS(U, H_\gamma) }^2
	\,
	du
	\Big)^{\nicefrac{1}{2}}
	\\
	&
	\leq
	\tfrac{p}{\sqrt{2}}
	\Big(
	\int_0^t 
	\| (-A)^{\gamma + \rho - \beta } e^{(t-u)A} \|_{L(H)}^2
	\|
	(-A)^{-\rho}
	(
	e^{(u - \llcorner u \lrcorner_\theta)A} 
	- 
	\operatorname{Id}_H 
	)
	\|_{L(H)}^2
	\| B  
	\|_{ \HS(U, H_{\beta } )}^2
	\,
	du
	\Big)^{\nicefrac{1}{2}}
	\\
	&
	\leq
	\tfrac{ p }{\sqrt{2}}   
	\| B  
	\|_{ \HS(U, H_{\beta} ) }
	\Big(
	\int_0^t 
	\| (-A)^{\gamma + \rho - \beta } e^{(t-u)A} \|_{L(H)}^2  
	( u - \llcorner u \lrcorner_\theta )^{2 \rho}
	\,
	du
	\Big)^{\nicefrac{1}{2}}
	\\
	&
	\leq
	\tfrac{ p }{\sqrt{2}}   
	\| B  
	\|_{ \HS(U, H_{\beta} ) }
	\| 
	(-A)^{ \min \{ 0, \gamma - \beta \} } \|_{L(H)}
	\\
	&\quad \cdot 
	\Big(
	\int_0^t 
	\| (-A)^{ \rho + \max \{ 0, \gamma - \beta \} } e^{(t-u)A} \|_{L(H)}^2  
	( u - \llcorner u \lrcorner_\theta )^{2 \rho}
	\,
	du
	\Big)^{\nicefrac{1}{2}}
	\\
	&
	\leq
	\tfrac{ p }{\sqrt{2}}  
	\| B  
	\|_{ \HS(U, H_{\beta} ) }
	\| 
	(-A)^{ \min \{ 0, \gamma  - \beta \} } \|_{L(H)}
	\Big( 
	\int_0^t  
	\tfrac{ 
	| \theta |_T^{2 \rho}
	}
	{
	( t - u )^{ 2 \rho + 
	2 \max \{0, \gamma - \beta \} }
	}
	\,
	du 
	\Big)^{\nicefrac{1}{2}}
	\\
	&
	\leq
	\tfrac{ p }{\sqrt{2}} 
	\| B  
	\|_{ \HS(U, H_{\beta } ) }
	\| 
	(-A)^{ \min \{ 0, \gamma  - \beta \} } \|_{L(H)}
	\tfrac{
	t^{ \nicefrac{1}{2} - \rho - \max \{0, \gamma - \beta \} } 
	}
	{
	\sqrt{ 1 - 2 \rho - 2 \max \{0, \gamma - \beta \} } 
	} 
	[ |\theta|_T ]^\rho 
	.
	\end{split}
	\end{equation}
	Moreover, observe that the assumption that
	$ \forall \, u \in [0,T], 
	\theta \in \varpi_T 
	\colon 
	\| \chi_{ \llcorner u \lrcorner_\theta }  
	-
	1 
	\|_{\L^{p}(\P; \R)} \leq C [ | \theta |_T ]^{ \rho } $
	and
	the  
	Burkholder-Davis-Gundy type inequality in Lemma~7.7 in Da Prato \& Zabczyk~\cite{dz92}
	 establish for every  
	$ t \in [0,T] $
	that
	\begin{equation}
	\begin{split}
	\label{eq:final_noise_estimate}
	&
	\Big\|
	\int_0^t
	(
	\chi_{ \llcorner u \lrcorner_\theta } 
	-
	1
	)
	e^{(t - u)A}  B 
	\,
	dW_u
	\Big\|_{L^p(\P; H_\gamma)}
	\\
	&
	\leq
	\tfrac{\sqrt{p(p-1)}}{\sqrt{2}}
	\Big(
	\int_0^t
	\big\|
	(
	\chi_{ \llcorner u \lrcorner_\theta } 
	-
	1
	)
	e^{(t - u)A}  B 
	\big\|_{ \L^p( \P; \HS(U, H_\gamma ) )}^2
	\, 
	du
	\Big)^{\nicefrac{1}{2}}
	\\
	&
	\leq
	\tfrac{ p }{\sqrt{2}}
	\Big(
	\int_0^t
	\| 
	\chi_{ \llcorner u \lrcorner_\theta } 
	-1
	\|_{\L^{p}(\P; \R)}^2
	\| (-A)^{\gamma - \beta} 
	e^{(t-u)A} 
	\|_{L(H)}^2
	\| 
	B  
	\|_{ \HS(U, H_{\beta } )}^2
	\, 
	du
	\Big)^{\nicefrac{1}{2}}
	\\
	&
	\leq
	\tfrac{ p C }{\sqrt{2}}  
	\| 
	B 
	\|_{ \HS(U, H_{\beta} )} 
	[ | \theta |_T ]^\rho
	\Big(
	\int_0^t  
\| (-A)^{\gamma - \beta} 
e^{(t-u)A} 
\|_{L(H)}^2
	\, 
	du
	\Big)^{\nicefrac{1}{2}}
	\\
	&
	\leq 
		\tfrac{ p C }{\sqrt{2}}  
	\| 
	B 
	\|_{ \HS(U, H_{\beta} )} 
	[ | \theta |_T ]^\rho
	\| (-A)^{ \min \{0, \gamma - \beta \} }   
	\|_{L(H)}
	\Big(
	\int_0^t  
	\| (-A)^{ \max \{0, \gamma - \beta \} } 
	e^{(t-u)A} 
	\|_{L(H)}^2
	\, 
	du
	\Big)^{\nicefrac{1}{2}}
	\\
	&
	\leq 
	\tfrac{ p C }{ \sqrt{2} } 
	\| 
	B 
	\|_{ \HS(U, H_{\beta} )} 
	[ |\theta|_T ]^{ \rho }
		\| (-A)^{ \min \{0, \gamma - \beta \} }   
	\|_{L(H)}
	\Big (  
	\int_0^t 
	( t - u )^{ -2 \max \{ 0, \gamma - \beta \} } \, du 
	\Big )^{ \nicefrac{1}{2} }
	\\
	&
	=  
	\tfrac{ p C }{ \sqrt{2} }
	\| 
	B 
	\|_{ \HS(U, H_{\beta} ) } 
	[ |\theta|_T ]^{ \rho }
	\| (-A)^{ \min \{0, \gamma - \beta \} }   
	\|_{L(H)}	
	\tfrac{
		t^{\nicefrac{1}{2} -  \max \{ 0, \gamma - \beta \} } 
	}
	{
		\sqrt{1 -2 \max \{ 0, \gamma - \beta \} }
	} 
	.
	\end{split}
	\end{equation}
	Combining this, 
	\eqref{eq:triangle+burkholder_estimate_error_stochastic},
	\eqref{eq:estimate_first},
	\eqref{eq:estimate_second}, 
	and H\"older's inequality 
	demonstrates that for every 
	$ t \in [0,T] $ 
	it holds that
	\begin{equation}
	\begin{split}
	\label{eq:first_part_estimate}
	&
	\Big\|
	\int_0^t 
	\Big(
	\chi_{ \llcorner u \lrcorner_\theta } 
	e^{(t- \llcorner u \lrcorner_\theta )A} 
	\,
	\tfrac{B  }{1+\| X_{{\llcorner u \lrcorner_\theta}, u} \|_H^2}
	- 
	e^{(t-u)A}  
	B  
	\Big)
	\, 
	dW_u 
	\Big\|_{L^p(\P; H_\gamma)}
	\\
	&
	\leq
\sqrt{2} p^3  
| \sup\nolimits_{ h \in \H } \values_h |^{-2\beta} 
\|  
B 
\|_{ \HS(U, H_{\beta }) }^3
\|  
(-A)^{ \min \{0, \gamma - \beta \} } 
\|_{L(H)}
\tfrac{  
	t^{ \nicefrac{1}{2} - \max \{ 0, \gamma - \beta \}	
	}  
}
{ \sqrt{ 1 - 2 \max \{ 0, \gamma - \beta \}	} }
|\theta|_T
	\\
	&
	\quad 
	+
\tfrac{ p }{\sqrt{2}} 
\| B  
\|_{ \HS(U, H_{\beta } ) }
\| 
(-A)^{ \min \{ 0, \gamma  - \beta \} } \|_{L(H)}
\tfrac{
	t^{ \nicefrac{1}{2} - \rho - \max \{0, \gamma - \beta \} } 
}
{
	\sqrt{ 1 - 2 \rho - 2 \max \{0, \gamma - \beta \} } 
} 
[ |\theta|_T ]^\rho
	\\
	&
	\quad
	+ 
	\tfrac{ p C }{ \sqrt{2} }
\| 
B 
\|_{ \HS(U, H_{\beta} ) } 
[ |\theta|_T ]^{ \rho }
\| (-A)^{ \min \{0, \gamma - \beta \} }   
\|_{L(H)}	
\tfrac{
	t^{\nicefrac{1}{2} -  \max \{ 0, \gamma - \beta \} } 
}
{
	\sqrt{1 -2 \max \{ 0, \gamma - \beta \} }
} 
	\\
	&
	\leq 
	\tfrac{
		p  
	C
	}{
	\sqrt{2 ( 1 - 2 \rho - 2 \max \{ 0, \gamma - \beta \} ) }}
\|  
(-A)^{ \min \{0, \gamma - \beta \} } 
\|_{L(H)}
	\| 
	B 
	\|_{ \HS(U, H_{\beta} ) } 
	\big(
	1
	+  
	| \sup\nolimits_{ h \in \H } \values_h |^{ - 2 \beta }
	\|  
	B 
	\|_{ \HS(U, H_\beta) }^2
	\big)
	\\
	&
	\quad
	\cdot
	\max \big\{  
	t^{ \nicefrac{1}{2} 
		+ 
		\min \{ 0, \beta - \gamma \} },
	t^{\nicefrac{1}{2} - \rho + 
		\min \{ 0, \beta - \gamma \} } 
	\big\}
	\Big[
	2 p^2
	|\theta|_T 
	+      
	[ |\theta|_T ]^\rho 
	+ 
	[ |\theta|_T ]^\rho   
	\Big]
	\\
	&
	\leq
	\tfrac{p
		C 
		[\max\{T, 1\} ]^{ \nicefrac{1}{2} + \beta }}{\sqrt{ 2 ( 1 - 2 \rho - 2 \max \{ 0, \gamma - \beta \} ) }}
	\| 
	B 
	\|_{ \HS(U, H_{\beta} ) } 
	\|  
	(-A)^{ \min \{0, \gamma - \beta \} } 
	\|_{L(H)}
	\\
	&
	\quad 
	\cdot 
	\big(
	1
	+ 
	| \sup\nolimits_{ h \in \H } \values_h |^{-2\beta} 
	\|  
	B 
	\|_{ \HS(U, H_\beta) }^2
	\big)
	\big[
	2 p^2
	|\theta|_T 
	+ 
	2   
	[ |\theta|_T ]^\rho 
	\big]	
	\\
	&
	\leq
	\tfrac{ 3 p^3  
     C 
		[\max\{T, 1\}]^{ \nicefrac{3}{2} + \beta }  }{\sqrt{2 
			(  1 - 2 \rho - 2 \max \{0, \gamma - \beta \} ) }}
	\| 
	B 
	\|_{ \HS(U, H_{\beta} ) } 
	\|  
	(-A)^{ \min \{0, \gamma - \beta \} } 
	\|_{L(H)}
	\\
	&
	\quad 
	\cdot 
	\big(
	1
	+ 
	| \sup\nolimits_{ h \in \H } \values_h |^{-2\beta} 
	\|  
	B 
	\|_{  \HS(U, H_\beta) }^2
	\big) 
	[ |\theta|_T ]^{  \rho  }
	.
	\end{split}
	\end{equation}
	Furthermore, note that
	the fact that
	$ \gamma = \max \{ \beta, \gamma \}
	+
	\min \{ 0, \gamma - \beta \} $
	and
	Lemma~\ref{lemma:some_helping_estimates}
	(with 
	$ p = p $,
	$ \rho = \max \{ \beta, \gamma \} $,
	$ \eta = \max \{ \beta, \gamma \} $, 
	$ X_{s, t} = X_{s, t} $
	for 
	$ t \in [s,T] $,
	$ s \in [0,T] $, 
	$ h \in (0,T] $
	in the notation of
	Lemma~\ref{lemma:some_helping_estimates})
	prove that for every $ t \in [0,T] $ it holds that
	\begin{equation}
	\begin{split}
	\label{eq:apply_useful_estimate1}
	&
	\Big\|
	\int_0^t 
	\chi_{\llcorner u \lrcorner_\theta} 
	e^{(t-\llcorner u \lrcorner_\theta)A} 
	\,
	\tfrac{2 X_{{\llcorner u \lrcorner_\theta}, u} 
		\langle X_{{\llcorner u \lrcorner_\theta}, u}, B \, 
		dW_u \rangle_H}
	{ ( 1 + \| X_{{\llcorner u \lrcorner_\theta}, u} \|_H^2 )^2 }   
	\Big \|_{L^p(\P; H_{ \gamma })} 
	\\
	&
	\leq 
	\| ( -A )^{ \min \{ 0, \gamma - \beta \} } \|_{L(H)}
	\Big\|
	\int_0^t 
	\chi_{\llcorner u \lrcorner_\theta} 
	e^{(t-\llcorner u \lrcorner_\theta)A} 
	\,
	\tfrac{2 X_{{\llcorner u \lrcorner_\theta}, u} 
		\langle X_{{\llcorner u \lrcorner_\theta}, u}, B \, 
		dW_u \rangle_H}
	{ ( 1 + \| X_{{\llcorner u \lrcorner_\theta}, u} \|_H^2 )^2 }   
	\Big \|_{L^p(\P; H_{ \max \{ \beta, \gamma \} })} 
	\\
	&
	\leq
	2 \sqrt{2} p^3  
	\| B \|_{ \HS(U,H_{\beta}) }^3
	| \sup\nolimits_{ h \in \H } \values_h |^{-2\beta} 
	\| ( -A )^{ \min \{ 0, \gamma - \beta \} } \|_{L(H)}
	\tfrac{ 
		t^{ \nicefrac{1}{2} + \beta - \max \{ \beta, \gamma \} } 
	}
	{
		\sqrt{  1 + 2 \beta - 2 \max \{ \beta, \gamma \} }
	}  
	|\theta|_T
	\end{split}
	\end{equation}
	and
	\begin{equation}
	\begin{split}
	\label{eq:apply_useful_estimate2}
	&
	\Big\|
	\int_0^t
	\chi_{ \llcorner u \lrcorner_\theta }
	e^{(t- \llcorner u \lrcorner_\theta)A}
	\Big( 
	\tfrac{ 4  
		\|  \mathbb{B}
		X_{\llcorner u \lrcorner_\theta, u}  \|_{H}^2
		X_{\llcorner u \lrcorner_\theta, u} 
	}
	{(1 + \| X_{\llcorner u \lrcorner_\theta, u} \|_H^2)^3}
	-
	\tfrac{
		2 B   
		\mathbb{B}
		X_{\llcorner u \lrcorner_\theta, u} 
		+ 
		\| B  \|_{\HS(U,H)}^2
		X_{\llcorner u \lrcorner_\theta, u}  
	}{
		( 1 + \| X_{\llcorner u \lrcorner_\theta, u} \|_H^2 )^2
	}
	\Big)
	\, 
	du
	\Big\|_{\L^p(\P; H_\gamma )}
	\\
	&
	\leq 
	\| ( -A )^{ \min \{ 0, \gamma - \beta \} } \|_{L(H)}
	\\
	&
	\quad \cdot 
	\Big\|
	\int_0^t
	\chi_{ \llcorner u \lrcorner_\theta }
	e^{(t- \llcorner u \lrcorner_\theta)A}
	\Big( 
	\tfrac{ 4  
		\| \mathbb{B}
		X_{\llcorner u \lrcorner_\theta, u}  \|_{H}^2
	X_{\llcorner u \lrcorner_\theta, u} 
	}
	{(1 + \| X_{\llcorner u \lrcorner_\theta, u} \|_H^2)^3}
	-
	\tfrac{
		2 B   
	\mathbb{B}
		X_{\llcorner u \lrcorner_\theta, u} 
		+ 
		\| B  \|_{\HS(U,H)}^2
		X_{\llcorner u \lrcorner_\theta, u}  
	}{
		( 1 + \| X_{\llcorner u \lrcorner_\theta, u} \|_H^2 )^2
	}
	\Big)
	\, 
	du
	\Big\|_{\L^p(\P; H_{   \max \{\beta, \gamma \}  } )}
	\\
	&
	\leq
	2
	\sqrt{2}
	p 
	\|  B \|_{ \HS( U, H_{\beta} ) }^3
	| \sup\nolimits_{ h \in \H } \values_h |^{-2\beta} 
		\| ( -A )^{ \min \{ 0, \gamma - \beta \} } \|_{L(H)}
	\tfrac{  t^{ 1 + \beta - \max \{ \beta, \gamma \} } 
}{  1 + \beta - \max \{ \beta, \gamma \} } 
	[ |\theta|_T ]^{\nicefrac{1}{2}}
	.
	\end{split}
	\end{equation}
	Moreover, observe that
	\begin{equation}
	\begin{split}
	\label{eq:trivial estimate}
	\frac{1}{ \sqrt{ 1 + 2 \beta - 2 \max \{ \beta, \gamma \} } }
	=
	\frac{1}{ \sqrt{ 1 - 2 \max \{ 0, \gamma - \beta \} } }
	\leq
	\frac{1}{ \sqrt{ 1 - 2 \rho - 2 \max \{ 0, \gamma - \beta \} } }
	.
	\end{split}
	\end{equation}
	Combining~\eqref{eq:basic_triangle},
	\eqref{eq:B_defect}, 
	\eqref{eq:basic_triangle2}, 
	\eqref{eq:first_part_estimate},
	\eqref{eq:apply_useful_estimate1}, 
	\eqref{eq:apply_useful_estimate2},
	the fact that
	$ \sqrt{ 1 + 2 \beta - 2 \max \{ \beta, \gamma \} } \leq 1 + \beta - \max \{ \beta, \gamma \} $,
	and the fact that
	$ \frac{ 3 }{\sqrt{2}} 
	+ 2 \sqrt{2} 
	+ 2 \sqrt{2} \leq 8 $
	hence ensures that
	for every $ t \in [0,T] $ it holds that
	\begin{equation}
	\begin{split}
	& 
	\|  \O_t -  O_t  \|_{ \L^p( \P; H_\gamma ) }
	\leq    
	\|  \tilde B - B \|_{\HS(U, H_{\beta - \eta} )}
	\| 
	(-A)^{ \min \{ 0, \gamma + \eta - \beta \} }  
	\|_{L(H)}  
	\tfrac{ p [ \max\{ T, 1 \} ]^{ \nicefrac{1}{2} + \beta } } { \sqrt{ 2(1 - 2 \max \{ 0, \gamma + \eta - \beta \} ) } } 
\\
&
\quad 
+
\tfrac{ 3 p^3  
	C 
	[\max\{T, 1\}]^{ \nicefrac{3}{2} + \beta }  }{\sqrt{2 
		(  1 - 2 \rho - 2 \max \{0, \gamma - \beta \} ) }}
\| 
B 
\|_{ \HS(U, H_{\beta} ) } 
\|  
(-A)^{ \min \{0, \gamma - \beta \} } 
\|_{L(H)}
\\
&
\quad 
\cdot 
\big(
1
+ 
| \sup\nolimits_{ h \in \H } \values_h |^{-2\beta} 
\|  
B 
\|_{  \HS(U, H_\beta) }^2
\big) 
[ |\theta|_T ]^{  \rho  }
	\\
	&
	\quad
	+
	2 \sqrt{2} p^3  
	\| B \|_{ \HS(U,H_{\beta}) }^3
	| \sup\nolimits_{ h \in \H } \values_h |^{-2\beta} 
		\| ( -A )^{ \min \{ 0, \gamma - \beta \} } \|_{L(H)}
	\tfrac{
		[ \max\{ T, 1 \} ]^{ \nicefrac{1}{2} + \beta }
	}
	{
		\sqrt{ 1 + 2 \beta - 2 \max \{ \beta, \gamma \} }
	}  
	|\theta|_T
	\\
	&
	\quad
	+
	2
	\sqrt{2}
	p 
	\|  B \|_{ \HS( U, H_{\beta} ) }^3
	| \sup\nolimits_{ h \in \H } \values_h |^{-2\beta} 
		\| ( -A )^{ \min \{ 0, \gamma - \beta \} } \|_{L(H)}
	\tfrac{ [ \max \{ T, 1 \} ]^{ 1 + \beta }}{ 
		\sqrt{ 1 + 2 \beta - 2 \max \{ \beta, \gamma \} } } 
	[ |\theta|_T ]^{\nicefrac{1}{2}}
	\\
	&
	\leq 
	\tfrac{ p  
		[ \max \{ T, 1 \} ]^{\nicefrac{1}{2} + \beta } } { \sqrt{2 (1 - 2 \max \{ 0, \gamma + \eta - \beta \} )} } 
	\| 
	(-A)^{ \min \{ 0, \gamma + \eta - \beta \} }  
	\|_{L(H)} 
	\|  \tilde B - B \|_{\HS(U, H_{\beta-\eta})} 
	\\
	&
	\quad 
	+
	\tfrac{ 8 p^3  C  
		[\max\{T, 1\}]^{ \nicefrac{3}{2} + \beta  }   
	}{\sqrt{  1 - 2 \rho - 2 \max \{ 0, \gamma - \beta \}  } 
	}
	\| B \|_{\HS(U, H_{\beta})}
	\| ( -A )^{ \min \{ 0, \gamma - \beta \} } \|_{L(H)}
	\\
	&
	\quad
	\cdot
	\big(
	1
	+ 
	| \sup\nolimits_{h \in \H} \values_h |^{-2\beta}
	\| B \|_{\HS(U,H_{\beta})}^2
	\big)
	[ |\theta|_T ]^{\rho }
	.
	\end{split}
	\end{equation}
	The
	proof of Lemma~\ref{lemma:error_exact_counterpart} is thus completed.
\end{proof}
\subsection[Exponential moments of temporal approximations of stochastic convolutions]{Exponential moments of temporally semi-discrete approximations of stochastic convolutions}
\label{subsection:ExpoMoments}
In this subsection
we
first derive two auxiliary lemmas    
(see 
Lemma~\ref{lemma:Some_helping_estimates} 
and
Lemma~\ref{lemma:some_helping_estimates2}
below)
 which we then 
 combine to establish
 in Lemma~\ref{lemma:Exp_regularity} below 
appropriate
exponential moment bounds
for the
approximation
process
$ \O \colon [0,T] \times \Omega \to H_\gamma $
from
Setting~\ref{setting:Strengthened_strong_a_priori_moment_bounds}.  
\begin{lemma}
	\label{lemma:Some_helping_estimates}
	\sloppy
	Assume Setting~\ref{setting:Strengthened_strong_a_priori_moment_bounds},
	let 
	$ p \in [1,\infty) $,   
	and
	for every $ s \in [0,T] $
	let 
		$ X_{s, (\cdot) }(\cdot) 
	=
	( X_{s,t}(\omega) )_{ (t,\omega)\in[s,T]\times\Omega }
	\colon 
	[s,T] \times \Omega
	\to H_\beta $ 
	be an $ ( \f_t )_{ t \in [s,T]} $-adapted 
	stochastic process
	with continuous sample paths 
	which satisfies for every
	$ t \in [s, T] $
	that
	$
	[
	X_{s, t}
	]_{\P, \mathcal{B}(H_\beta)}
	= \int_s^t B 
	\, 
	dW_u
	$.
	Then 
	it holds 
	for every
	$ t \in [0,T] $ 
	that
	\begin{equation}
	\begin{split}
	\label{eq:estimate3}
	&
	\Big\|
	\int_0^t
	\chi_{\llcorner u \lrcorner_\theta}
	e^{(t-\llcorner u \lrcorner_\theta)A}
	\Big( 
	\tfrac{ 4  
		\| \mathbb{B}
		X_{\llcorner u \lrcorner_\theta, u}  \|_U^2
		X_{\llcorner u \lrcorner_\theta, u}	
	}
	{(1 + \| X_{\llcorner u \lrcorner_\theta, u} \|_H^2)^3}
	-
	\tfrac{
		2 B  
		\mathbb{B}
		X_{\llcorner u \lrcorner_\theta, u}
		+ 
		\| B  \|_{\HS(U,H)}^2 
		X_{\llcorner u \lrcorner_\theta, u}
	}{
		( 1 + \| X_{\llcorner u \lrcorner_\theta, u} \|_H^2 )^2
	}
	\Big) 
	\, 
	du
	\Big\|_{\L^p(\P; H )}
	\\
	&
	\leq
	2
	\| B \|_{ \HS(U,H ) }^2
	t
	.
	\end{split}
	\end{equation}
\end{lemma}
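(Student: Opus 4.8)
The plan is to establish a \emph{deterministic} pointwise bound in $\|\cdot\|_H$ on the integrand appearing in~\eqref{eq:estimate3}, uniform in $\omega\in\Omega$ and in the values of the random variables $X_{\llcorner u\lrcorner_\theta,u}$, and then simply to integrate this bound over $[0,t]$; no probabilistic ingredient beyond the Bochner integrability already used above (cf.\ the proof of Lemma~\ref{lemma:some_helping_estimates}) is needed.

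First I would record the elementary inequalities that for every $r\in[0,\infty)$ it holds that $2r\le1+r^2$, and hence $\tfrac{r}{(1+r^2)^2}\le\tfrac{r}{1+r^2}\le\tfrac12$ and $\tfrac{r^3}{(1+r^2)^3}=\big(\tfrac{r}{1+r^2}\big)^3\le\tfrac18$. Next I would use that $\mathbb{B}=B^\ast$ to obtain for every $v\in H$ the bounds $\|\mathbb{B}v\|_U\le\|B\|_{\HS(U,H)}\|v\|_H$ and $\|B\mathbb{B}v\|_H\le\|B\|_{\HS(U,H)}\|\mathbb{B}v\|_U\le\|B\|_{\HS(U,H)}^2\|v\|_H$. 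Combining these with the triangle inequality would then show for every $v\in H$ that
\begin{equation}
\begin{split}
&\Big\|\tfrac{4\|\mathbb{B}v\|_U^2\,v}{(1+\|v\|_H^2)^3}-\tfrac{2B\mathbb{B}v+\|B\|_{\HS(U,H)}^2\,v}{(1+\|v\|_H^2)^2}\Big\|_H \\
&\leq\|B\|_{\HS(U,H)}^2\Big(\tfrac{4\|v\|_H^3}{(1+\|v\|_H^2)^3}+\tfrac{3\|v\|_H}{(1+\|v\|_H^2)^2}\Big) \\
&\leq\big(4\cdot\tfrac18+3\cdot\tfrac12\big)\|B\|_{\HS(U,H)}^2=2\,\|B\|_{\HS(U,H)}^2.
\end{split}
\end{equation}

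With this pointwise bound in hand, the rest is routine. Since $\chi$ takes values in $[0,1]$ and since $\sup_{h\in\H}\values_h<0$ makes $(e^{sA})_{s\in[0,\infty)}$ a contraction semigroup on $H$ (so that $\|e^{(t-\llcorner u\lrcorner_\theta)A}\|_{L(H)}\le1$ for every $u\in[0,t]$, because $\llcorner u\lrcorner_\theta\le u\le t$), the $H$-norm of the integrand in~\eqref{eq:estimate3} is bounded pathwise by $2\|B\|_{\HS(U,H)}^2$. Since the integrand is Bochner integrable, I would conclude that $\big\|\int_0^t(\cdots)\,du\big\|_H\le\int_0^t2\|B\|_{\HS(U,H)}^2\,du=2\|B\|_{\HS(U,H)}^2\,t$ holds $\P$-almost surely, and, this bound being deterministic, it also bounds the $\L^p(\P;H)$-norm, which is exactly~\eqref{eq:estimate3}.

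I do not anticipate a genuine obstacle; the only point is that the drift does not degrade as $X_{\llcorner u\lrcorner_\theta,u}$ varies. Equivalently, after collapsing the sum over an orthonormal basis $\mathbb{U}$ of $U$, the parenthesized factor in the integrand equals $\sum_{\mathbf{u}\in\mathbb{U}}\tfrac12\psi''(X_{\llcorner u\lrcorner_\theta,u})(B\mathbf{u},B\mathbf{u})$, with $\psi(v)=\tfrac{v}{1+\|v\|_H^2}$ as in the proof of Lemma~\ref{lemma:Ito_representation}, and the estimate $\|\tfrac12\psi''(z)(u,u)\|_H\le2\|u\|_H^2$ valid for all $z,u\in H$ yields $\sum_{\mathbf{u}\in\mathbb{U}}\|\tfrac12\psi''(X_{\llcorner u\lrcorner_\theta,u})(B\mathbf{u},B\mathbf{u})\|_H\le2\|B\|_{\HS(U,H)}^2$; this is the same computation in a slightly different guise. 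The only care needed is the bookkeeping between the Hilbert--Schmidt, operator, and $H$-norms of $B$ and $\mathbb{B}$.
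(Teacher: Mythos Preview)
Your proof is correct and follows essentially the same approach as the paper: both arguments bound the integrand in $H$-norm pointwise by $2\|B\|_{\HS(U,H)}^2$, using $\chi\in[0,1]$, the contractivity of $e^{sA}$, and the elementary bounds $\tfrac{r}{(1+r^2)^2}\le\tfrac12$ and $\tfrac{r^3}{(1+r^2)^3}\le\tfrac18$, and then integrate over $[0,t]$. The only cosmetic difference is that the paper brings the $\L^p$-norm inside the time integral via Minkowski before estimating, whereas you first obtain an almost-sure pathwise bound and then pass to the $\L^p$-norm; the computations are otherwise identical.
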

\begin{proof}[Proof of Lemma~\ref{lemma:Some_helping_estimates}]
	Note that
	for every $ t \in [0,T] $
	it holds that
	\begin{equation}
	\begin{split}
	&
	\Big\|
	\int_0^t
	\chi_{\llcorner u \lrcorner_\theta}
	e^{(t-\llcorner u \lrcorner_\theta)A}
	\Big( 
	\tfrac{ 4  
		\|  \mathbb{B}
		X_{\llcorner u \lrcorner_\theta, u}  \|_U^2
		X_{\llcorner u \lrcorner_\theta, u}
	}
	{(1 + \| X_{\llcorner u \lrcorner_\theta, u} \|_H^2)^3}
	-
	\tfrac{
		2 B   
	\mathbb{B}
		X_{\llcorner u \lrcorner_\theta, u}
		+ 
		\| B \|_{\HS(U,H)}^2
		X_{\llcorner u \lrcorner_\theta, u} 
	}{
		( 1 + \| X_{\llcorner u \lrcorner_\theta, u} \|_H^2 )^2
	}
	\Big)  
	\, 
	du
	\Big\|_{\L^p(\P; H )}
	\\
	&
	\leq
	\int_0^t
	\Big\| 
	e^{(t-\llcorner u \lrcorner_\theta)A}
	\Big( 
	\tfrac{ 4  
		\|  \mathbb{B}
		X_{\llcorner u \lrcorner_\theta, u}  \|_U^2
		X_{\llcorner u \lrcorner_\theta, u}
	}
	{(1 + \| X_{\llcorner u \lrcorner_\theta, u} \|_H^2)^3}
	-
	\tfrac{
		2 B
		\mathbb{B}
		X_{\llcorner u \lrcorner_\theta, u}
		+ 
		\| B \|_{\HS(U,H)}^2 
		X_{\llcorner u \lrcorner_\theta, u}
	}{
		( 1 + \| X_{\llcorner u \lrcorner_\theta, u} \|_H^2 )^2
	}
	\Big) 
	\Big\|_{\L^p(\P; H )}
	\, 
	du
	\\
	&
	\leq
	\int_0^t 
	\Big\|  
	\tfrac{ 4  
		\|  \mathbb{B}
		X_{\llcorner u \lrcorner_\theta, u}  \|_U^2
		X_{\llcorner u \lrcorner_\theta, u}
	}
	{(1 + \| X_{\llcorner u \lrcorner_\theta, u} \|_H^2)^3}
	-
	\tfrac{
		2 B
		\mathbb{B}
		X_{\llcorner u \lrcorner_\theta, u}
		+ 
		\| B \|_{\HS(U,H)}^2 
		X_{\llcorner u \lrcorner_\theta, u}
	}{
		( 1 + \| X_{\llcorner u \lrcorner_\theta, u} \|_H^2 )^2
	}  
	\Big\|_{\L^p(\P; H )}
	\, 
	du
	\\
	&
	\leq 
	\int_0^t 
	\Big\|  
	\tfrac{ 4  
		\| \mathbb{B} \|_{L(H,U)}^2
		\| X_{\llcorner u \lrcorner_\theta, u}  \|_{H}^3 }
	{(1 + \| X_{\llcorner u \lrcorner_\theta, u} \|_H^2)^3}
	+
	\tfrac{
		(
		2
		\| B \mathbb{B} \|_{ L(H, H)} 
		+
		\| B \|_{\HS(U,H)}^2
		)
		\| X_{\llcorner u \lrcorner_\theta, u} \|_H
	}{
		( 1 + \| X_{\llcorner u \lrcorner_\theta, u} \|_H^2 )^2
	} 
	\Big\|_{\L^p(\P; \R)}
	\, 
	du
	\\
	&
	\leq 
	\int_0^t  
	\big(  
	\tfrac{  
		\| B \|_{\HS(U,H)}^2  
	}{2}
	+  
	\| B \|_{ L(U, H ) }
	\| \mathbb{B} \|_{ L(H, U)}
	+
	\tfrac{ 
		\| B \|_{\HS(U,H)}^2
	}{2}
	\big)  
	\, 
	du
	.
	\end{split}
	\end{equation}
	This completes the proof of Lemma~\ref{lemma:Some_helping_estimates}.
\end{proof}
\begin{lemma}
	\label{lemma:some_helping_estimates2}
	Assume Setting~\ref{setting:Strengthened_strong_a_priori_moment_bounds}
	and
		for every $ s \in [0,T] $
	let 
	$ X_{s, (\cdot) }(\cdot) 
	=
	( X_{s,t}(\omega) )_{ (t,\omega)\in[s,T]\times\Omega }
	\colon 
	[s,T] \times \Omega
	\to H_\beta $  
	be an $ ( \f_t )_{ t \in [s,T]} $-adapted 
	stochastic process
	with continuous sample paths 
	which satisfies for every 
	$ t \in [s, T] $
	that
	$
	[
	X_{s, t}
	]_{\P, \mathcal{B}( H_\beta )}
	= \int_s^t B 
	\, 
	dW_u
	$.
	Then 
	it holds 
	for every 
	$ n \in \N $,
	$ t \in [0,T] $
	that
	\begin{equation}
	\begin{split}  
	& 
	\Big\|
	\int_0^t 
	\chi_{ \llcorner u \lrcorner_\theta }
	e^{(t- \llcorner u \lrcorner_\theta )A} 
	\,
	\Big[ 
	\tfrac{B }{1+\| X_{{\llcorner u \lrcorner_\theta}, u} \|_H^2}  
	-
	\tfrac{2 X_{{\llcorner u \lrcorner_\theta}, u} \langle X_{{\llcorner u \lrcorner_\theta}, u}, B ( \cdot ) \rangle_H}
	{ ( 1 + \| X_{{\llcorner u \lrcorner_\theta}, u} \|_H^2 )^2 }
	\Big] 
	\, 
	dW_u
	\Big\|_{L^{2n}(\P; H )}^{2n}
	\\
	&  
	\leq 
	\tfrac{ 9^n ( 2n) !  }{ 8^n n! }
	\| B \|_{ \HS(U,H)}^{2n}
	t^n 
	.
	\end{split}
	\end{equation}
\end{lemma}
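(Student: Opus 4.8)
The plan is to reduce the estimate to a pathwise bound on the integrand and then to run an It\^o-formula recursion that produces the sharp combinatorial constant. Fix $n\in\N$ and $t\in[0,T]$, let $\mathbb{U}\subseteq U$ be an orthonormal basis of $U$, and for every $u\in[0,t]$ set
\[
\Psi_u
:=
\tfrac{B}{1+\|X_{\llcorner u\lrcorner_\theta,u}\|_H^2}
-
\tfrac{2\,X_{\llcorner u\lrcorner_\theta,u}\langle X_{\llcorner u\lrcorner_\theta,u},B(\cdot)\rangle_H}{(1+\|X_{\llcorner u\lrcorner_\theta,u}\|_H^2)^2}
\in\HS(U,H),
\qquad
\Phi_u:=\chi_{\llcorner u\lrcorner_\theta}\,e^{(t-\llcorner u\lrcorner_\theta)A}\Psi_u,
\]
and let $M\colon[0,t]\times\Omega\to H$ be an $(\f_s)_{s\in[0,t]}$-adapted stochastic process with continuous sample paths satisfying $M_0=0$ and $[M_s]_{\P,\mathcal{B}(H)}=\int_0^s\Phi_u\,dW_u$ for every $s\in[0,t]$. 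Then $M$ is a continuous $H$-valued martingale whose value at $s=t$ is exactly the process inside the norm in the assertion, so it suffices to bound $\E[\|M_t\|_H^{2n}]$.

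\emph{Step 1: a pathwise bound on the integrand.} I would show that $\sup_{u\in[0,t]}\|\Phi_u\|_{\HS(U,H)}\le\tfrac32\|B\|_{\HS(U,H)}$. Since $\chi_{\llcorner u\lrcorner_\theta}\in[0,1]$ and, by $\sup_{h\in\H}\values_h<0$, $\|e^{rA}\|_{L(H)}\le1$ for every $r\in[0,\infty)$, it is enough to bound $\|\Psi_u\|_{\HS(U,H)}$. The first summand of $\Psi_u$ has $\HS(U,H)$-norm at most $\tfrac{\|B\|_{\HS(U,H)}}{1+\|X_{\llcorner u\lrcorner_\theta,u}\|_H^2}\le\|B\|_{\HS(U,H)}$. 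For the second summand I would use the identity $\|v\langle v,B(\cdot)\rangle_H\|_{\HS(U,H)}^2=\sum_{\mathbf{u}\in\mathbb{U}}\|v\|_H^2\,|\langle B\mathbf{u},v\rangle_H|^2=\|v\|_H^2\,\|\mathbb{B}v\|_U^2$ together with $\|\mathbb{B}\|_{L(H,U)}=\|B\|_{L(U,H)}\le\|B\|_{\HS(U,H)}$ to obtain the $\HS(U,H)$-norm bound $\tfrac{2\|X_{\llcorner u\lrcorner_\theta,u}\|_H^2}{(1+\|X_{\llcorner u\lrcorner_\theta,u}\|_H^2)^2}\|B\|_{\HS(U,H)}\le\tfrac12\|B\|_{\HS(U,H)}$, where the last step uses the elementary inequality $\tfrac{2r}{(1+r)^2}\le\tfrac12$ for $r\in[0,\infty)$. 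Adding the two contributions and applying the triangle inequality in $\HS(U,H)$ yields the factor $\tfrac32$.

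\emph{Step 2: It\^o's formula and induction on $n$.} I would apply It\^o's formula (see, e.g., Brze\'zniak et al.\ \cite[Theorem~2.4]{BrzezniakNeervenVeraarWeis2008}) to the map $H\ni x\mapsto\|x\|_H^{2n}\in\R$ and the process $M$. Using that the trace of its second derivative at $x$ against $LL^*$ is at most $2n(2n-1)\|x\|_H^{2n-2}\|L\|_{\HS(U,H)}^2$ for every $L\in\HS(U,H)$ (since $\|L^*x\|_U\le\|L\|_{\HS(U,H)}\|x\|_H$), the fact that $M_0=0$, the fact that the arising stochastic integral is a martingale with vanishing expectation (justified by localization together with the finiteness provided by the $(n-1)$-st induction step), and the pathwise bound from Step~1, I would obtain for every $s\in[0,t]$ that
\[
\E\big[\|M_s\|_H^{2n}\big]
\le
n(2n-1)\int_0^s\E\big[\|M_u\|_H^{2n-2}\|\Phi_u\|_{\HS(U,H)}^2\big]\,du
\le
\tfrac{9}{4}\,n(2n-1)\,\|B\|_{\HS(U,H)}^2\int_0^s\E\big[\|M_u\|_H^{2n-2}\big]\,du.
\]
From this I would prove by induction on $n\in\N$ that $\E[\|M_s\|_H^{2n}]\le\tfrac{9^n(2n)!}{8^nn!}\|B\|_{\HS(U,H)}^{2n}s^n$ for every $s\in[0,t]$: the case $n=1$ is It\^o's isometry, and the induction step follows by inserting the hypothesis for $n-1$, using $n(2n-1)\int_0^s u^{n-1}\,du=(2n-1)s^n$, and the identity $(2n-1)\cdot\tfrac{9}{4}\cdot\tfrac{9^{n-1}}{8^{n-1}}\cdot\tfrac{(2n-2)!}{(n-1)!}=\tfrac{9^n(2n)!}{8^nn!}$. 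Evaluating at $s=t$ gives the assertion.

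The step I expect to be the main obstacle is obtaining the \emph{sharp} constant. The combinatorial factor in the claim is $\tfrac{(2n)!}{2^nn!}=(2n-1)!!$ up to the $(9/4)^n$ coming from the $\tfrac32$ of Step~1, whereas a black-box Burkholder--Davis--Gundy estimate with exponent $2n$ would instead produce a factor of order $[n(2n-1)]^n$, which is far too large. Hence the proof has to go through the It\^o-formula recursion and the explicit induction rather than a ready-made moment inequality, and within that the only genuinely delicate point is making the localization/integrability bookkeeping precise so that the stochastic integral term may be dropped in expectation at each level of the induction.
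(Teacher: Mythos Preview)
Your proof is correct and uses the same core mechanism as the paper --- the pathwise bound $\|\Phi_u\|_{\HS(U,H)}\le\tfrac32\|B\|_{\HS(U,H)}$ together with an It\^o-formula recursion on $\|\cdot\|_H^{2n}$ --- so the constants and the induction match exactly.

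The one structural difference is how the It\^o process is set up. The paper does not apply It\^o's formula directly to the stochastic convolution; instead it introduces an auxiliary process $\mathcal{Z}$ that equals $Z_{\llcorner t\lrcorner_\theta}$ plus the undamped stochastic integral over $[\llcorner t\lrcorner_\theta,t]$, applies It\^o's formula to $\|\mathcal{Z}\|_H^p$ on each grid interval, uses $\|Z_t\|_H=\|e^{(t-\llcorner t\lrcorner_\theta)A}\mathcal{Z}_t\|_H\le\|\mathcal{Z}_t\|_H$, and then telescopes over the grid to reach $[0,t]$. You bypass this by freezing the terminal time $t$ and regarding $s\mapsto M_s=\int_0^s\Phi_u\,dW_u$ as a genuine $H$-valued martingale on $[0,t]$ (which it is, since $e^{(t-\llcorner u\lrcorner_\theta)A}$ is a deterministic contraction for fixed $t$ and $\Phi_u$ is adapted and pathwise bounded), so It\^o's formula applies on the whole interval in one shot. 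Your route is slightly more direct; the paper's detour via $\mathcal{Z}$ is not needed for the stated bound, though it makes the ``drop the semigroup'' step more explicit. Either way the recursion $\E[\|\cdot\|_H^{2n}]\le 2n(2n-1)\tfrac{9}{8}\|B\|_{\HS(U,H)}^2\int_0^s\E[\|\cdot\|_H^{2n-2}]\,du$ and the resulting constant $\tfrac{9^n(2n)!}{8^n n!}$ are identical.
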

\begin{proof}[Proof of Lemma~\ref{lemma:some_helping_estimates2}]
	Throughout this proof  
	let $ \mathbb{U} \subseteq U $ be an orthonormal basis of $ U $,
	let
	$ Z \colon [0,T] \times \Omega \to
	H $
	be an 
	$ ( \f_t )_{ t \in [0,T] } $-adapted
	stochastic process 
	which satisfies for every
	$ t \in [0,T] $ that
	\begin{equation}
	\begin{split}  
	[ Z_t ]_{ \P, \B(H ) }
	= 
	\int_0^t 
	\chi_{\llcorner u \lrcorner_\theta}
	e^{(t-\llcorner u \lrcorner_\theta)A} 
	\,
	\Big[ 
	\tfrac{B }{1+\| X_{{\llcorner u \lrcorner_\theta}, u} \|_H^2}  
	-
	\tfrac{2 X_{{\llcorner u \lrcorner_\theta}, u} \langle X_{{\llcorner u \lrcorner_\theta}, u}, B 
		( \cdot )  \rangle_H}
	{ ( 1 + \| X_{{\llcorner u \lrcorner_\theta}, u} \|_H^2 )^2 }
	\Big] 
	\, 
	dW_u
	,  
	\end{split}
	\end{equation}
	and let
	$ \mathcal{Z} \colon [0,T] \times \Omega \to
	H $
	be an
	$ ( \f_t )_{ t \in [0,T] } $-adapted 
	stochastic process
	with left-continuous sample paths
	and finite right limits
	which satisfies for every
	$ t \in [0,T] $ that
	\begin{equation}
	[ \mathcal{Z}_t ]_{ \P, \B( H ) }
	= 
	[ Z_{ \llcorner t \lrcorner_\theta } ]_{ \P, \B( H ) }
	+ 
	\int_{ \llcorner t \lrcorner_\theta }^t 
	\chi_{ \llcorner t \lrcorner_\theta }
	\Big[ 
	\tfrac{B }{1+\| X_{ \llcorner t \lrcorner_\theta, u} \|_H^2}  
	-
	\tfrac{2 X_{ \llcorner t \lrcorner_\theta, u} \langle X_{ \llcorner t \lrcorner_\theta, u}, B 
		( \cdot ) \rangle_H}
	{ ( 1 + \| X_{ \llcorner t \lrcorner_\theta, u} \|_H^2 )^2 }
	\Big] 
	\, 
	dW_u 
	.   
	\end{equation}
	Note that
	It\^o's formula
	proves for every
	$ p \in [2, \infty) $,
	$ t \in [0,T] $ that
	\begin{equation}
	\begin{split}
	\label{eq:ApplyIto}
	&
 [	\| \mathcal{Z}_t \|_H^p ]_{ \P, \B(\R) }
	=
[	\| Z_{ \llcorner t \lrcorner_{ \theta } } \|_H^p ]_{ \P, \B(\R) }
	\\
	&
	+
	\int_{ \llcorner t \lrcorner_{ \theta } }^t
	\chi_{ \llcorner u \lrcorner_{ \theta } }
	p
	\| \mathcal{Z}_u \|_H^{ p - 2 }
	\left < 
	\mathcal{Z}_u,
	\tfrac{B }{1+\| X_{ \llcorner t \lrcorner_\theta, u} \|_H^2}  
	-
	\tfrac{2 X_{ \llcorner t \lrcorner_\theta, u} \langle X_{ \llcorner t \lrcorner_\theta, u}, B 
		( \cdot ) \rangle_H}
	{ ( 1 + \| X_{ \llcorner t \lrcorner_\theta, u} \|_H^2 )^2 }
	\right >_H
	\, dW_u
	\\
	&
	+
	\bigg[ 
	\tfrac{1}{2}
	\int_{ \llcorner t \lrcorner_{ \theta } }^t
	\bigg[
	\chi_{ \llcorner u \lrcorner_{ \theta } }
	\sum_{ { \bf u } \in \mathbb{U} }
	\bigg\{
	p
	\| \mathcal{Z}_u \|_H^{p-2}
	\Big\| 
	\tfrac{ B { \bf u } }{1+\| X_{ \llcorner t \lrcorner_\theta, u} \|_H^2}  
	-
	\tfrac{2 X_{ \llcorner t \lrcorner_\theta, u} \langle X_{ \llcorner t \lrcorner_\theta, u}, B 
		{ \bf u } \rangle_H}
	{ ( 1 + \| X_{ \llcorner t \lrcorner_\theta, u} \|_H^2 )^2 }
	\Big\|_H^2
	\\
	&
	+
	p ( p - 2 )
	\chi_{ \llcorner t \lrcorner_\theta }
	\1_{ \{ \mathcal{Z}_u \neq 0 \} }
	\,
	\| \mathcal{Z}_u \|_H^{p-4}
	\Big|
	\left <
	\mathcal{Z}_u,
	\tfrac{B { \bf u } }{1+\| X_{ \llcorner t \lrcorner_\theta, u} \|_H^2}  
	-
	\tfrac{2 X_{ \llcorner t \lrcorner_\theta, u} \langle X_{ \llcorner t \lrcorner_\theta, u}, B 
		{ \bf u } \rangle_H}
	{ ( 1 + \| X_{ \llcorner t \lrcorner_\theta, u} \|_H^2 )^2 }
	\right >_H \Big|^2
	\bigg\}
	\bigg] 
	\, du
	\bigg]_{ \P, \B(\R) }
	.
	\end{split}
	\end{equation}
	In addition, observe that the
	Burkholder-Davis-Gundy type inequality in Lemma~7.7 in Da Prato \& Zabczyk~\cite{dz92}
	ensures
	 for every
	$ p \in [2, \infty) $,
	$ t \in [0,T] $ that
	\begin{equation} 
	\begin{split}
	\label{eq:UnsuresMartingale}
	&
	\| \mathcal{Z}_t \|_{ \L^p(\P; H) }
	\leq 
	\| Z_{ \llcorner t \lrcorner_\theta } 
	\|_{ \L^p( \P; H) }
	+
	\Big \| 
	\int_{ \llcorner t \lrcorner_\theta }^t 
	\chi_{ \llcorner t \lrcorner_\theta }
	\Big[ 
	\tfrac{B }{1+\| X_{ \llcorner t \lrcorner_\theta, u} \|_H^2}  
	-
	\tfrac{2 X_{ \llcorner t \lrcorner_\theta, u} \langle X_{ \llcorner t \lrcorner_\theta, u}, B 
		( \cdot ) \rangle_H}
	{ ( 1 + \| X_{ \llcorner t \lrcorner_\theta, u} \|_H^2 )^2 }
	\Big] 
	\, 
	dW_u  
	\Big \|_{ L^p( \P; H) }
	\\
	& 
	\leq 
	\Big[ 
	\tfrac{ p^2 }{ 2 }
	\int_0^{ \llcorner t \lrcorner_\theta } 
	\Big\| 
	\tfrac{B }{1+\| X_{ \llcorner u \lrcorner_\theta, u} \|_H^2}  
	-
	\tfrac{2 X_{ \llcorner u \lrcorner_\theta, u} 
		\langle X_{ \llcorner u \lrcorner_\theta, u}, B 
		( \cdot ) \rangle_H}
	{ ( 1 + \| X_{ \llcorner u \lrcorner_\theta, u} \|_H^2 )^2 }
	\Big\|_{ \L^p( \P; \HS(U,H) ) }^2
	\, 
	d u
	\Big]^{ \nicefrac{1}{2} }
	\\
	&
	\quad 
	+
	\Big[ 
	\tfrac{ p^2 }{ 2 }
	\int_{ \llcorner t \lrcorner_\theta }^t 
	\Big\| 
	\tfrac{B }{1+\| X_{ \llcorner t \lrcorner_\theta, u} \|_H^2}  
	-
	\tfrac{2 X_{ \llcorner t \lrcorner_\theta, u} \langle X_{ \llcorner t \lrcorner_\theta, u}, B 
		( \cdot ) \rangle_H}
	{ ( 1 + \| X_{ \llcorner t \lrcorner_\theta, u} \|_H^2 )^2 }
	\Big\|_{ \L^p( \P; \HS(U,H) ) }^2
	\, 
	d u
	\Big]^{ \nicefrac{1}{2} }
	\\
	&
	\leq
	2
	\Big[ 
	\tfrac{ p^2 }{ 2 }
	\int_0^t  
	\Big\| 
	\tfrac{ \| B \|_{\HS(U,H)} }{1+\| X_{ \llcorner u \lrcorner_\theta, u} \|_H^2}  
	+
	\tfrac{2 
		\| X_{ \llcorner u \lrcorner_\theta, u} \|_H^2
		\| B \|_{ \HS(U,H) } }
	{ ( 1 + \| X_{ \llcorner u \lrcorner_\theta, u} \|_H^2 )^2 }
	\Big\|_{ \L^p( \P; \HS(U,H) ) }^2
	\, 
	d u
	\Big]^{ \nicefrac{1}{2} }
	\\
	&
	\leq 
	2
	\Big[ 
	\tfrac{ p^2 }{ 2 }
	\int_0^t  
	4 \| B \|_{ \HS(U,H) }^2
	\, 
	d u
	\Big]^{ \nicefrac{1}{2} }
	\leq
	2 \sqrt{2} p
	\max \{ T, 1 \} 
	\| B \|_{ \HS(U,H) } 
	.
	\end{split}
	\end{equation}
	This implies for every
	$ p \in [2,\infty) $, 
	$ t \in [0,T] $ 
	that
	\begin{equation} 
	\begin{split} 
	\label{eq:EstablishMartingale1}
	&
	\int_{ \llcorner t \lrcorner_{ \theta } }^t
	\Big\| 
	\chi_{ \llcorner u \lrcorner_{ \theta } }
	\| \mathcal{Z}_u \|_H^{ p - 2 }
	\left < 
	\mathcal{Z}_u,
	\tfrac{B }{1+\| X_{ \llcorner t \lrcorner_\theta, u} \|_H^2}  
	-
	\tfrac{2 X_{ \llcorner t \lrcorner_\theta, u} \langle X_{ \llcorner t \lrcorner_\theta, u}, B 
		( \cdot ) \rangle_H}
	{ ( 1 + \| X_{ \llcorner t \lrcorner_\theta, u} \|_H^2 )^2 }
	\right >_H
	\Big\|_{ \L^2(\P; \HS(U,H) ) }^2
	\, du
	\\
	&
	\leq
	\int_{ \llcorner t \lrcorner_{ \theta } }^t
	\Big\|  
	\| \mathcal{Z}_u \|_H^{ p - 2 }
	\left < 
	\mathcal{Z}_u,
	\tfrac{B }{1+\| X_{ \llcorner t \lrcorner_\theta, u} \|_H^2}  
	-
	\tfrac{2 X_{ \llcorner t \lrcorner_\theta, u} \langle X_{ \llcorner t \lrcorner_\theta, u}, B 
		( \cdot ) \rangle_H}
	{ ( 1 + \| X_{ \llcorner t \lrcorner_\theta, u} \|_H^2 )^2 }
	\right >_H
	\Big\|_{ \L^2(\P; \HS(U,H) ) }^2
	\, du
	\\
	&
	\leq
	\int_{ \llcorner t \lrcorner_{ \theta } }^t
	\Big\|  
	\| \mathcal{Z}_u \|_H^{ p - 1 }
	\big\|
	\tfrac{B }{1+\| X_{ \llcorner t \lrcorner_\theta, u} \|_H^2}  
	-
	\tfrac{2 X_{ \llcorner t \lrcorner_\theta, u} \langle X_{ \llcorner t \lrcorner_\theta, u}, B 
		( \cdot ) \rangle_H}
	{ ( 1 + \| X_{ \llcorner t \lrcorner_\theta, u} \|_H^2 )^2 }
	\big\|_{ \HS(U,H) }
	\Big\|_{ \L^2(\P; \R ) }^2
	\, du
	\\
	&
	\leq
	\int_{ \llcorner t \lrcorner_{ \theta } }^t
	\Big\|  
	\| \mathcal{Z}_u \|_H^{ p - 1 }
	\big(
	\tfrac{ \| B \|_{\HS(U,H)} }{1+\| X_{ \llcorner t \lrcorner_\theta, u} \|_H^2}  
	+
	\tfrac{2 
		\| X_{ \llcorner t \lrcorner_\theta, u} \|_H^2
		\| B \|_{ \HS(U,H) } }
	{ ( 1 + \| X_{ \llcorner t \lrcorner_\theta, u} \|_H^2 )^2 }
	\big)
	\Big\|_{ \L^2(\P; \R ) }^2
	\, du
	\\
	&
	\leq 
	4
	\| B \|_{ \HS(U,H) }^2
	\int_{ \llcorner t \lrcorner_{ \theta } }^t
	\|  
	\mathcal{Z}_u
	\|_{ \L^{ 2(p-1) }(\P; H ) }^{2(p-1)}
	\, du
	<
	\infty 
	.
	\end{split} 
	\end{equation} 
		Moreover, note that for every
	$ t \in [0,T] $ it holds that
	\begin{equation}
	\begin{split}  
	[ Z_t ]_{ \P, \B(H ) }
	& = 
	e^{(t-\llcorner t \lrcorner_\theta)A} 
	\bigg[ 
	[ Z_{ \llcorner t \lrcorner_\theta } ]_{ \P, \B(H) }
	+
	\int_{ \llcorner t \lrcorner_\theta }^t 
	\chi_{ \llcorner t \lrcorner_\theta }
	\Big[ 
	\tfrac{B }{1+\| X_{ \llcorner t \lrcorner_\theta, u} \|_H^2}  
	-
	\tfrac{2 X_{ \llcorner t \lrcorner_\theta, u} \langle X_{ \llcorner t \lrcorner_\theta, u}, B 
		( \cdot ) \rangle_H}
	{ ( 1 + \| X_{ \llcorner t \lrcorner_\theta, u} \|_H^2 )^2 }
	\Big] 
	\, 
	dW_u   
	\bigg] 
	.
	\end{split}
	\end{equation}
	Combining this, 
	\eqref{eq:ApplyIto}, 
	\eqref{eq:EstablishMartingale1},
	and Tonelli's theorem
	establishes that for every
	$ p \in [2, \infty) $,
	$ t \in [0,T] $ it holds that
	\begin{equation}
	\begin{split}
	& 
	\E[ 
	\| Z_t \|_H^p
	]
	=
	\E [ 
	\| 
	e^{ ( t- \llcorner t \lrcorner_{ \theta } )A } 
	\mathcal{Z}_t 
	\|_H^p	
	]
	\leq 
	\E[ 
	\| \mathcal{Z}_t \|_H^p
	]
	=
	\E[ 
	\| Z_{ \llcorner t \lrcorner_{ \theta } } \|_H^p
	]
	\\
	&
	+
	\tfrac{1}{2}
	\int_{ \llcorner t \lrcorner_{ \theta } }^t
	\E
	\bigg[
	\chi_{ \llcorner u \lrcorner_{ \theta } }
	\sum_{ { \bf u } \in \mathbb{U} }
	\bigg\{
	p
	\| \mathcal{Z}_u \|_H^{p-2}
	\Big\| 
	\tfrac{ B { \bf u } }{1+\| X_{ \llcorner t \lrcorner_\theta, u} \|_H^2}  
	-
	\tfrac{2 X_{ \llcorner t \lrcorner_\theta, u} \langle X_{ \llcorner t \lrcorner_\theta, u}, B 
		{ \bf u } \rangle_H}
	{ ( 1 + \| X_{ \llcorner t \lrcorner_\theta, u} \|_H^2 )^2 }
	\Big\|_H^2
	\\
	&
	+
	p ( p - 2 )
	\chi_{ \llcorner t \lrcorner_\theta }
	\1_{ \{ \mathcal{Z}_u \neq 0 \} }
	\,
	\| \mathcal{Z}_u \|_H^{p-4}
	\Big|
	\left <
	\mathcal{Z}_u,
	\tfrac{B { \bf u } }{1+\| X_{ \llcorner t \lrcorner_\theta, u} \|_H^2}  
	-
	\tfrac{2 X_{ \llcorner t \lrcorner_\theta, u} \langle X_{ \llcorner t \lrcorner_\theta, u}, B 
		{ \bf u } \rangle_H}
	{ ( 1 + \| X_{ \llcorner t \lrcorner_\theta, u} \|_H^2 )^2 }
	\right >_H \Big|^2
	\bigg\}
	\bigg] 
	\, du
	.
	\end{split} 
	\end{equation} 
	The Cauchy-Schwarz inequality hence assures for every
	$ p \in [2, \infty) $,
	$ t \in [0,T] $ 
	that
	\begin{equation}
	\begin{split}
	\label{eq:Demonstrate}
	& 
	\E[ 
	\| Z_t \|_H^p
	] 
	\leq 
	\E[ 
	\| \mathcal{Z}_t \|_H^p
	]
	\leq
	\E[ 
	\| Z_{ \llcorner t \lrcorner_{ \theta } } \|_H^p
	]
	\\
	&
	\quad 
	+
	\tfrac{1}{2}
	\int_{ \llcorner t \lrcorner_{ \theta } }^t
	\E
	\bigg[ 
	\sum_{ { \bf u } \in \mathbb{U} }
	\bigg\{
	p
	\| \mathcal{Z}_u \|_H^{p-2}
	\Big( 
	\tfrac{ \| B { \bf u } \|_H  }{ 1+\| X_{ \llcorner t \lrcorner_\theta, u} \|_H^2 }  
	+
	\tfrac{2  
		\| X_{ \llcorner t \lrcorner_\theta, u} \|_H  
		\| X_{ \llcorner t \lrcorner_\theta, u} \|_H   \|B 
		{ \bf u } \|_H  }
	{ ( 1 + \| X_{ \llcorner t \lrcorner_\theta, u} \|_H^2 )^2 }
	\Big)^2
	\\
	&
	\quad
	+ 
	p ( p - 2 )
	\| \mathcal{Z}_u \|_H^{p-2}
	\Big(
	\tfrac{ \| B { \bf u } \|_H  }{  1+\| X_{ \llcorner t \lrcorner_\theta, u} \|_H^2  }  
	+
	\tfrac{ 2 \| X_{ \llcorner t  \lrcorner_\theta, u} 
		\|_H 
		\| X_{ \llcorner t \lrcorner_\theta, u} \|_H  
		\| B 
		{ \bf u } \|_H  }
	{ ( 1 + \| X_{ \llcorner t \lrcorner_\theta, u} \|_H^2 )^2 }
	\Big)^2
	\bigg\}
	\bigg] 
	\, du
	\\
	&
	\leq 
	\E[ 
	\| Z_{ \llcorner t \lrcorner_{ \theta } } \|_H^p
	]
	+
	p ( p - 1) 
	\big[ 
	\tfrac{1}{2}
	\big]
	\| B \|_{ \HS(U,H) }^2
	\int_{ \llcorner t \lrcorner_{ \theta } }^t
	\E
	\bigg[  
	\| \mathcal{Z}_u \|_H^{p-2}
	\Big( 
	\tfrac{ 1 }{  1+\| X_{ \llcorner t \lrcorner_\theta, u} \|_H^2  }  
	+
	\tfrac{ 2 \| X_{ \llcorner t \lrcorner_\theta, u} \|_H^2
	}
	{ ( 1 + \| X_{ \llcorner t \lrcorner_\theta, u} \|_H^2 )^2 }
	\Big)^2 
	\bigg] 
	\, du
	\\
	&
	\leq  
	\E[ 
	\| Z_{ \llcorner t \lrcorner_{ \theta } } \|_H^p
	]
	+
	p ( p- 1)
	\big[
	\tfrac{9}{8} 
	\big]
	\| B \|_{ \HS(U,H)}^2
	\int_{ \llcorner t \lrcorner_{ \theta } }^t
	\E
	[  
	\| \mathcal{Z}_u \|_H^{p-2}
	]
	\, du 
	.
	\end{split}
	\end{equation}
	This
	demonstrates that for every
	$ n \in \N $,
	$ t \in [0,T] $ it holds that
	\begin{equation}
	\begin{split} 
	\E[ 
	\| Z_t \|_H^{2n}
	]
	\leq 
	\E[
	\| \mathcal{Z}_t \|_H^{2n}
	]	
	&\leq 
	\E[ 
	\| Z_{ \llcorner t \lrcorner_{ \theta } } \|_H^{2n}
	]
	+
	2n ( 2n - 1)
	\big[ \tfrac{9}{8} \big]
	\| B \|_{ \HS(U,H)}^2
	\int_{ \llcorner t  \lrcorner_{ \theta } }^{t }
	\E
	\big [  
	\| \mathcal{Z}_u \|_H^{2n-2}
	\big ]
	\, du 
	\\
	&
	\leq
	\ldots
	\leq
	\E[ 
	\| Z_0 \|_H^{2n}
	]
	+
	2n ( 2n - 1)
	\big[ \tfrac{9}{8} \big]
	\| B \|_{ \HS(U,H)}^2
	\int_0^{t }
	\E
	\big [  
	\| \mathcal{Z}_u \|_H^{2n-2}
	\big ]
	\, du 
	\\
	&
	=
    2n ( 2n - 1)
	\big[ \tfrac{9}{8} \big]
	\| B \|_{ \HS(U,H)}^2
	\int_0^{t }
	\E
	\big [  
	\| \mathcal{Z}_u \|_H^{2n-2}
	\big ]
	\, du 
	.
	\end{split}
	\end{equation} 
	Therefore, we obtain that for every
	$ n \in \N $, $ t_0 \in [0,T] $ it holds that
	\begin{equation} 
	\begin{split} 
	&
	\E[ 
	\| Z_{t_0} \|_H^{2n}
	]
	\leq
	\E[
	\| \mathcal{Z}_{t_0} \|_H^{2n}
	]
	\leq  
	2n ( 2n - 1 ) \big[ \tfrac{9}{8} \big]
	\| B \|_{ \HS(U,H)}^2
	\int_0^{ t_0 }
	\E
	\big [  
	\| \mathcal{Z}_u \|_H^{2n-2}
	\big ]
	\, du 
	\\
	&
	\leq \ldots
	\leq  
	(2n)!
	\big[ \tfrac{9}{8} \big]^n
	\| B \|_{ \HS(U,H)}^{2n}
	\int_0^{ t_0 }
	\int_0^{t_1}
	\cdots
	\int_0^{t_{n-1}}
	\, dt_n \cdots dt_2 \, dt_1
	=
	\tfrac{ 9^n ( 2n) !}{ 8^n n! }
	\| B \|_{ \HS(U,H)}^{2n}
	( t_0 )^n
	.
	\end{split} 
	\end{equation}
	The proof of Lemma~\ref{lemma:some_helping_estimates2}
	is thus completed.
\end{proof}
\begin{lemma}
	\label{lemma:Exp_regularity}
	Assume Setting~\ref{setting:Strengthened_strong_a_priori_moment_bounds}
	and let
	$ \varepsilon \in [0, \nicefrac{ 1 }{ (8 [
		\max \{ \| B \|_{ \HS(U,H) }, 1 \} ]^2
		\max \{ T, 1 \} )^2 } ) $.
	Then  it holds for every
	$ t \in [0,T] $ that
	\begin{equation}
	\begin{split}
	\E[ e^{ \varepsilon \| \O_t \|_H^2 } ] 
	&
	\leq
	\tfrac{ 2 }{ 1 - \varepsilon^2 [ 8 \| B \|_{ \HS(U,H) }
		\max \{ \| B \|_{ \HS(U,H) }, 1 \}
		\max \{ T, 1 \} ]^4 }  
	.
	\end{split}
	\end{equation}
\end{lemma}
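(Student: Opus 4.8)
The plan is to split $\O_t$ into its stochastic‑integral part and its $du$‑drift part, to control the two parts by the already‑established Lemmas~\ref{lemma:Some_helping_estimates} and~\ref{lemma:some_helping_estimates2}, and to evaluate the exponential moment of the stochastic‑integral part by expanding it as a power series in the even moments.

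Fix $t\in[0,T]$. First I would invoke Lemma~\ref{lemma:Ito_representation} to obtain the $\P$‑a.s.\ identity $\O_t = Z_t + Y_t$ in $H$, where $Z_t$ is the It\^o integral estimated in Lemma~\ref{lemma:some_helping_estimates2} (read as an element of $H$ instead of $H_\gamma$, which is legitimate since $H_\gamma$ embeds continuously into $H$) and $Y_t$ is the $du$‑integral estimated in Lemma~\ref{lemma:Some_helping_estimates}. Since Lemma~\ref{lemma:Some_helping_estimates} gives $\|Y_t\|_{L^p(\P;H)} \le 2\|B\|_{\HS(U,H)}^2 t$ for every $p\in[1,\infty)$, passing to the limit $p\to\infty$ shows that $\|Y_t\|_H \le 2\|B\|_{\HS(U,H)}^2 t$ holds $\P$‑a.s., and Lemma~\ref{lemma:some_helping_estimates2} supplies $\E[\|Z_t\|_H^{2n}] \le \tfrac{9^n(2n)!}{8^n n!}\|B\|_{\HS(U,H)}^{2n}t^n$ for all $n\in\N$ (the bound being trivially an equality for $n=0$).

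Next, from $\|\O_t\|_H^2 \le 2\|Z_t\|_H^2 + 2\|Y_t\|_H^2 \le 2\|Z_t\|_H^2 + 8\|B\|_{\HS(U,H)}^4 t^2$ and the a.s.\ bound on $Y_t$ I would factor out the deterministic term, $\E[e^{\varepsilon\|\O_t\|_H^2}] \le e^{8\varepsilon\|B\|_{\HS(U,H)}^4 t^2}\,\E[e^{2\varepsilon\|Z_t\|_H^2}]$. To the second factor I would apply Tonelli's theorem to write $\E[e^{2\varepsilon\|Z_t\|_H^2}] = \sum_{n=0}^\infty \tfrac{(2\varepsilon)^n}{n!}\E[\|Z_t\|_H^{2n}]$, insert the moment bound, and use $\tfrac{(2n)!}{(n!)^2} = \binom{2n}{n} \le 4^n$, which collapses the series into $\E[e^{2\varepsilon\|Z_t\|_H^2}] \le \sum_{n=0}^\infty (9\varepsilon\|B\|_{\HS(U,H)}^2 t)^n = \big(1 - 9\varepsilon\|B\|_{\HS(U,H)}^2 t\big)^{-1}$; the assumption on $\varepsilon$ guarantees $9\varepsilon\|B\|_{\HS(U,H)}^2 t \le \tfrac{9}{64} < 1$, so the geometric series converges.

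It remains to combine. The assumption $\varepsilon < (8[\max\{\|B\|_{\HS(U,H)},1\}]^2\max\{T,1\})^{-2}$ forces $8\varepsilon\|B\|_{\HS(U,H)}^4 t^2 \le \tfrac18$ and $9\varepsilon\|B\|_{\HS(U,H)}^2 t \le \tfrac{9}{64}$, hence $e^{8\varepsilon\|B\|_{\HS(U,H)}^4 t^2} \le e^{1/8} \le \sqrt2$ and $(1-9\varepsilon\|B\|_{\HS(U,H)}^2 t)^{-1} \le \tfrac{64}{55} \le \sqrt2$, so that $\E[e^{\varepsilon\|\O_t\|_H^2}] \le 2$. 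Since the same assumption yields $\varepsilon^2[8\|B\|_{\HS(U,H)}\max\{\|B\|_{\HS(U,H)},1\}\max\{T,1\}]^4 < 1$, the right‑hand side of the asserted inequality is at least $2$, which finishes the proof. The only real difficulty is the bookkeeping: one must check that the single smallness condition on $\varepsilon$ is simultaneously enough for the series to converge, for the drift contribution to stay bounded, and for the final numerical comparison; some care is also needed when deducing the almost‑sure bound for $Y_t$ from its uniform $L^p$‑bound and when matching $Z_t$ from Lemma~\ref{lemma:some_helping_estimates2} with the stochastic‑integral part in Lemma~\ref{lemma:Ito_representation}.
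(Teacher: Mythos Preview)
Your proof is correct, and it takes a somewhat different route from the paper's.

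Both proofs start from the same decomposition $\O_t = Z_t + Y_t$ via Lemma~\ref{lemma:Ito_representation} and feed in the same moment estimates from Lemmas~\ref{lemma:Some_helping_estimates} and~\ref{lemma:some_helping_estimates2}. The difference is in how the two pieces are recombined. The paper keeps $Z_t$ and $Y_t$ together: it bounds $\|\O_t\|_{\L^{2n}(\P;H)}$ by the triangle inequality, then uses the even--moment expansion $e^x \le 2\sum_{m\ge 0} x^{2m}/(2m)!$ together with $(4m)! \le 2^{4m}[(2m)!]^2$ to sum a geometric series in $\varepsilon^2$, landing exactly on the stated right-hand side. You instead exploit that the $L^p$ bound on $Y_t$ is $p$-independent to upgrade it to an almost-sure bound, factor out the resulting deterministic contribution $e^{8\varepsilon\|B\|_{\HS(U,H)}^4 t^2}$, and expand $\E[e^{2\varepsilon\|Z_t\|_H^2}]$ directly as $\sum_n (2\varepsilon)^n \E[\|Z_t\|_H^{2n}]/n!$, using only $\binom{2n}{n}\le 4^n$. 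Your route avoids the $\cosh$ trick entirely and actually yields the sharper numerical bound $\E[e^{\varepsilon\|\O_t\|_H^2}]\le 2$; you then finish by observing that the asserted right-hand side is at least $2$. The paper's route has the advantage of producing the precise form of the constant in the statement, while yours is a little more elementary and gives a tighter estimate.
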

\begin{proof}[Proof of Lemma~\ref{lemma:Exp_regularity}]
	Throughout this proof
	for every $ s \in [0,T] $ 
	let
	$ X_{s, (\cdot) }(\cdot) 
	=
	( X_{s,t}(\omega) )_{ (t,\omega)\in[s,T]\times\Omega }
	\colon $
	$ [s,T] \times \Omega
	\to H_\beta $
	be
	an
	$ ( \f_u )_{ u \in [s,T] } $-adapted
	stochastic process with continuous sample paths
	which satisfies for every
	$ u \in [s,T] $
	that
	$ [ X_{s,u} ]_{\P, \mathcal{B}(H_\beta) }
	=
	\int_s^u B \, dW_\tau $. 
Lemma~\ref{lemma:Ito_representation}
	(with
	$ X_{s, t} = X_{s, t} $
	for
	$ t \in [s,T] $,
	$ s \in [0,T] $
	in the notation of 
	Lemma~\ref{lemma:Ito_representation}), 
	Lemma~\ref{lemma:Some_helping_estimates}
	(with
	$ p = 2 n $,
	$ X_{s, t} = X_{s, t} $
	for
	$ t \in [s,T] $, 
	$ s \in [0,T] $, 
	$ n \in \N $
	in the notation of
	Lemma~\ref{lemma:Some_helping_estimates}), 
	Lemma~\ref{lemma:some_helping_estimates2} 
	(with 
	$ X_{s, t} = X_{s, t} $
	for
	$ t \in [s,T] $,
	$ s \in [0,T] $,  
	$ n \in \N $
	in the notation of 
	Lemma~\ref{lemma:some_helping_estimates2}),
	and the triangle inequality
	ensure that for every
	$ n \in \N $,
	$ t \in [0,T] $
	it holds that 
	\begin{equation}
	\begin{split}
	\label{eq:same_exp11}
	&
	\| \O_t \|_{ \L^{2n}( \P; H) }
	%
	\leq 
	\Big\|
	\int_0^t
	\chi_{\llcorner u \lrcorner_\theta}
	\Big(
	\tfrac{
		e^{(t- \llcorner u \lrcorner_\theta)A} 
		B  }{1+\| X_{{\llcorner u \lrcorner_\theta}, u} \|_H^2}
	-
	\tfrac{2 
		e^{(t- \llcorner u \lrcorner_\theta)A} 
		X_{{\llcorner u \lrcorner_\theta}, u} \langle X_{{\llcorner u \lrcorner_\theta}, u}, B  (\cdot) \rangle_H}
	{ ( 1 + \| X_{{\llcorner u \lrcorner_\theta}, u} \|_H^2 )^2 }
	\Big)
	\,
	dW_u
	\Big\|_{L^{2n}(\P; H )}
	\\
	&
	+ 
	\Big\|
	\int_0^t
	\chi_{ \llcorner u \lrcorner_\theta }
	e^{(t-\llcorner u \lrcorner_\theta)A}
	\Big[ 
	\tfrac{ 4  
		\|  \mathbb{B}
		X_{\llcorner u \lrcorner_\theta, u}  \|_U^2
		X_{\llcorner u \lrcorner_\theta, u}
	}
	{(1 + \| X_{\llcorner u \lrcorner_\theta, u} \|_H^2)^3}
	-
	\tfrac{
		2 B 
		\mathbb{B} 
		X_{\llcorner u \lrcorner_\theta, u}
		+ 
		\| B \|_{\HS(U,H)}^2
		X_{\llcorner u \lrcorner_\theta, u} 
	}{
		( 1 + \| X_{\llcorner u \lrcorner_\theta, u} \|_H^2 )^2
	}
	\Big]
	\, 
	du
	\Big\|_{\L^{2n}(\P; H )}
	\\
	&
	\leq 
	\tfrac{3}{2 \sqrt{2}} \big( \tfrac{ ( 2n )! }{ n! } \big)^{\nicefrac{1}{2n}}
	\| B \|_{ \HS(U,H) } t^{ \nicefrac{1}{2} }
	+
	2 \| B \|_{ \HS(U,H) }^2 t
	\\
	&
	\leq
	4 \| B \|_{ \HS(U,H) }
	\max \{ \| B \|_{ \HS(U,H) }, 1 \}
	\max \{ T, 1 \}
	\big( \tfrac{ ( 2n )! }{ n! } \big)^{\nicefrac{1}{2n}}
	.
	\end{split}
	\end{equation}
	This, the fact that for every
	$ x \in [0, \infty) $ it holds that
	$ 
	e^x \leq 2  [ \smallsum_{m=0}^\infty
	\tfrac{x^{2m}}{(2m)!} ] 
	$
	(see, e.g., Lemma~2.4 in Hutzenthaler et al.\ \cite{HutzenthalerJentzenWang2017Published}),
	the dominated convergence theorem, 
	and the fact that
	$ \forall
	\, m \in \N\colon (4m)! \leq 2^{4m} [(2m)!]^2 $
	imply that for every
	$ t \in [0,T] $ it holds that
	\begin{equation}
	\begin{split}
	&
	\E[ e^{ \varepsilon \| \O_t \|_H^2 } ] 
	\leq
	2 \,
	\E \bigg[
	\sum_{ n=0 }^\infty 
	\tfrac{ \varepsilon^{2n} \| \O_t \|_H^{4n} }{ (2n)! }
	\bigg] 
	=
	2
	\left[
	\sum_{ n=0 }^\infty 
	\E \Big[
	\tfrac{ \varepsilon^{2n} \| \O_t \|_H^{4n} }{ (2n)! }
	\Big] 
	\right] 
	\\
	&
	\leq
	2
	\left[
	\sum_{n=0}^\infty 
	\tfrac{ (4n)! }   { [ (2n) ! ]^2 }
	\varepsilon^{2n}
	[ 4   \| B \|_{ \HS(U,H) }
	\max \{ \| B \|_{ \HS(U,H) }, 1 \}
	\max \{ T, 1 \} ]^{4n}
	\right]
	\\
	&
	\leq
	2
	\left[
	\sum_{n=0}^\infty 
	2^{4n}
	\varepsilon^{2n}
	[ 4 \| B \|_{ \HS(U,H) }
	\max \{ \| B \|_{ \HS(U,H) }, 1 \}
	\max \{ T, 1 \} ]^{4n}
	\right] 
	\\
	&
	=
	\tfrac{ 2 }{ 1 - \varepsilon^2 [ 8  \| B \|_{ \HS(U,H) }
		\max \{ \| B \|_{ \HS(U,H) }, 1 \}
		\max \{ T, 1 \} ]^4 }
	.
	\end{split}
	\end{equation}
	The proof of Lemma~\ref{lemma:Exp_regularity}
	is thus completed.
\end{proof}
\section[Regularity properties of space-time 
approximations of stochastic convolutions]{Regularity properties of
	tamed-truncated space-time 
	approximations of stochastic convolutions}
\label{section:Conslusion}
\begin{setting}
\label{setting:Full_discrete}
Assume Setting~\ref{setting:main},
let
$ \beta \in [0, \infty) $,
$ T \in (0,\infty) $, 
let
$ ( \Omega, \F, \P, ( \f_t )_{t \in [0,T]} ) $ 
be a filtered probability space which fulfills the usual conditions,
let $ (W_t)_{t\in [0,T]} $
be an $ \operatorname{Id}_U $-cylindrical   
$ ( \f_t )_{t\in [0,T]} $-Wiener process,  
let
$ B \in  \HS(U, H_\beta) $,
let
$ \mathbb{U} \subseteq U $
be an orthonormal basis of $ U $,
let 
$ P_I \colon H \to H $,
$ I \in \mathcal{P}(\H) $, 
and
$ \hat P_J \colon U \to U $,
$ J \in \mathcal{P}( \mathbb{U} ) $,
be the linear operators 
which satisfy for every 
$ x \in H $,
$ y \in U $,
$ I \in \mathcal{P}(\H) $,
$ J \in \mathcal{P}(\mathbb{U} ) $ 
that
$ P_I(x)
= \sum_{h \in I} \langle h, x \rangle_H h $
and
$ \hat P_J(y)
= \sum_{u \in J} \langle u, y \rangle_U u $,
let
$ \chi^{\theta, I, J} \colon [0,T] \times \Omega \to [0,1] $,
$ \theta \in \varpi_T $,
$ I \in \mathcal{P}_0(\H) $,
$ J \in \mathcal{P}(\mathbb{U} ) $, 
be 
$ ( \f_t )_{t\in [0,T]} $-adapted
stochastic 
processes,
and
let
$ \O^{\theta, I, J } \colon [0,T] \times \Omega \to P_I( H ) $,
$ \theta \in \varpi_T $,
$ I \in \mathcal{P}_0(\H) $,
$ J \in \mathcal{P}( \mathbb{U} ) $, 
be  
stochastic 
processes  
which satisfy for every 
$ \theta \in \varpi_T $,
$ I \in \mathcal{P}_0(\H) $, 
$ J \in \mathcal{P}( \mathbb{U} ) $,
$ t \in [0,T] $
that 
$ \O_0^{\theta,I, J} = 0 $
and
\begin{equation} 
\begin{split}  
\label{eq:Scheme}  
[
\O_t^{\theta, I, J}  
]_{\P, \mathcal{B}( P_I(H) ) }
=   
[
e^{(t-\llcorner t \lrcorner_\theta )A}
\O^{\theta, I, J}_{
	\llcorner t \lrcorner_\theta 
}  
]_{\P, \mathcal{B}( P_I(H) ) }
+
\frac{
	\int_{ \llcorner t \lrcorner_\theta  }^t
	\chi_{\llcorner t \lrcorner_\theta }^{
		\theta, I, J}
	e^{(t- \llcorner t \lrcorner_\theta  )A}
	P_I B \hat P_J
	\, 
	dW_s
}{
1 + 
\| 
\int_{ \llcorner t \lrcorner_\theta }^t
P_I B \hat P_J
\, 
dW_s 
\|_H^2
} 
.
\end{split}
\end{equation}
\end{setting}
\subsection{Main results}
Here we apply the results from Section~\ref{section:StrengthenedAprioriBound} in order to obtain our main results concerning 
tamed-truncated 
space-time 
approximations (see~\eqref{eq:Scheme} above) of stochastic convolutions. 
A uniform boundedness of moments in fractional order smoothness spaces is presented in Corollary~\ref{Corollary:FiniteMoments}, a uniform H\"older continuity in time is shown in Corollary~\ref{corollary:App_noise_regularity}, strong convergence rates are established in Corollary~\ref{corollary:noise_approximation_converges}, and Corollary~\ref{corollary:ExpMomentsNoise} concerns a uniform boundedness of exponential moments.
\begin{corollary}
	\label{Corollary:FiniteMoments}
	Assume Setting~\ref{setting:Full_discrete}
	and
	let
	$ p \in [1, \infty) $, 
	$ \gamma \in [0, \nicefrac{1}{2} + \beta ) $. 
	Then it holds that
	\begin{equation} 
	\label{eq:Apriori1}
	\sup\nolimits_{\theta \in \varpi_T }
	\sup\nolimits_{ J \in \mathcal{P}( \mathbb{U} ) }
	\sup\nolimits_{ I \in \mathcal{P}_0(\H) }
	\sup\nolimits_{ t \in [0,T] }
	\| \O_t^{ \theta, I, J } \|_{ \L^{p}( \P; H_{ \gamma } )} 
	< \infty 
	.
	\end{equation}
\end{corollary}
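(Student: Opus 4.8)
The plan is to obtain~\eqref{eq:Apriori1} as an essentially immediate consequence of the $\theta$-uniform a priori bound of Lemma~\ref{lemma:more_regularity}, after recognising each fully discrete process $\O^{\theta,I,J}$ as a concrete instance of the temporally semi-discrete scheme of Setting~\ref{setting:Strengthened_strong_a_priori_moment_bounds}. Before doing so I would reduce to a convenient parameter range. Since $\P$ is a probability measure, H\"older's (equivalently Jensen's) inequality gives $\|v\|_{\L^p(\P;H_\gamma)}\le\|v\|_{\L^{\max\{p,2\}}(\P;H_\gamma)}$, so I may and do assume $p\in[2,\infty)$. Putting $\tilde\gamma=\max\{\beta,\gamma\}$, one has $\tilde\gamma\in[\beta,\nicefrac{1}{2}+\beta)$, and the continuous embedding $H_{\tilde\gamma}\hookrightarrow H_\gamma$ holds with embedding constant $\|(-A)^{\min\{0,\gamma-\beta\}}\|_{L(H)}<\infty$ (finite because $\sup_{h\in\H}\values_h<0$); hence it suffices to bound $\|\O_t^{\theta,I,J}\|_{\L^p(\P;H_{\tilde\gamma})}$ uniformly in $\theta$, $I$, $J$, and $t$.

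Next I would fix $\theta\in\varpi_T$, $I\in\mathcal{P}_0(\H)$, and $J\in\mathcal{P}(\mathbb{U})$, and observe that, since $A$ commutes with $P_I$ and $P_I(H)$ is a finite dimensional subspace of $H_r$ for every $r\in\R$, the process $\O^{\theta,I,J}$ can be regarded as a stochastic process with values in $H_{\tilde\gamma}$. A direct comparison of~\eqref{eq:Scheme} with~\eqref{eq:Numerics} then shows that Setting~\ref{setting:Strengthened_strong_a_priori_moment_bounds} is satisfied with $\gamma$ replaced by $\tilde\gamma$, with the operator $B$ of that setting replaced by $P_I B\hat P_J\in\HS(U,H_\beta)$ (its adjoint in the sense of that setting then being $\hat P_J\mathbb{B}P_I$), with the truncation process replaced by $\chi^{\theta,I,J}$, and with $\O$ replaced by $\O^{\theta,I,J}$. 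Since moreover $\beta\le\tilde\gamma$, Lemma~\ref{lemma:more_regularity} applies and yields for every $t\in[0,T]$ that
\begin{equation*}
\|\O_t^{\theta,I,J}\|_{\L^p(\P;H_{\tilde\gamma})}\le 3p\,\|P_I B\hat P_J\|_{\HS(U,H_\beta)}\,\tfrac{[\max\{T,1\}]^{3/2}}{1+2\beta-2\tilde\gamma}\bigl(1+4p^2\|P_I B\hat P_J\|_{\HS(U,H_\beta)}^2\,|\sup_{h\in\H}\values_h|^{-2\beta}\bigr).
\end{equation*}

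The final step is to note that the right-hand side of the previous display does not depend on $t$ or on $\theta$, and --- because $P_I$ and $\hat P_J$ are orthogonal projections with $P_I$ commuting with $(-A)^\beta$, so that $\|P_I B\hat P_J\|_{\HS(U,H_\beta)}\le\|B\|_{\HS(U,H_\beta)}$ --- also does not depend on $I$ or $J$. Combining this with the embedding $H_{\tilde\gamma}\hookrightarrow H_\gamma$ from the first paragraph and taking the supremum over $\theta\in\varpi_T$, $J\in\mathcal{P}(\mathbb{U})$, $I\in\mathcal{P}_0(\H)$, and $t\in[0,T]$ establishes~\eqref{eq:Apriori1}. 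I expect the only non-routine points to be (i) verifying that the fully discrete recursion~\eqref{eq:Scheme} is, verbatim, the semi-discrete recursion~\eqref{eq:Numerics} for the data listed above, in particular that $\O^{\theta,I,J}$ legitimately takes values in $H_{\tilde\gamma}$, and (ii) keeping track of the fact that the bound of Lemma~\ref{lemma:more_regularity} carries no dependence on the mesh $\theta$ or on the spatial and noise discretization parameters $I$, $J$; the reductions $p\mapsto\max\{p,2\}$ and $\gamma\mapsto\max\{\beta,\gamma\}$ are elementary.
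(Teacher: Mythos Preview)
Your proposal is correct and follows essentially the same approach as the paper: reduce to $p\ge 2$ via H\"older's inequality, replace $\gamma$ by $\max\{\gamma,\beta\}$, and apply Lemma~\ref{lemma:more_regularity} to each $\O^{\theta,I,J}$ with $B$ replaced by $P_I B\hat P_J$. Your write-up is in fact more explicit than the paper's, since you spell out why the resulting bound is uniform in $I$, $J$, $\theta$, and $t$ (via $\|P_I B\hat P_J\|_{\HS(U,H_\beta)}\le\|B\|_{\HS(U,H_\beta)}$), which the paper leaves implicit.
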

\begin{proof}[Proof of Corollary~\ref{Corollary:FiniteMoments}]
	Observe that
Lemma~\ref{lemma:more_regularity}
(with 
$ \beta = \beta $,
$ \gamma = \max \{ \gamma, \beta \} $,
$ T = T $,
$ \theta = \theta $,
$ ( \Omega, \F, \P ) =
( \Omega, \F, \P ) $,
$ ( \f_u )_{ u \in [0,T]} 
=
( \f_u )_{ u \in [0,T]} $,
$ ( W_u )_{ u \in [0,T] } 
=
( W_u )_{ u \in [0,T] } $,
$ B = ( U \ni u \mapsto P_I B \hat P_J (u) \in H_\beta ) $,
$ \chi = \chi^{\theta, I, J} $,
$ \O = ( [0,T] \times \Omega \ni (t,\omega) \mapsto 
\O_t^{ \theta, I, J }( \omega ) \in H_{ \max \{ \gamma, \beta \} } ) $,
$ p = \max \{ p, 2 \} $  
for  
$ \theta \in \varpi_T $,
$ I \in \mathcal{P}_0(\H) $,
$ J \in \mathcal{P}( \mathbb{U} ) $  
in the notation of Lemma~\ref{lemma:more_regularity})
and H\"older's inequality
show that~\eqref{eq:Apriori1} holds.
The proof of Corollary~\ref{Corollary:FiniteMoments}
is thus completed.
\end{proof}
\begin{corollary}
	\label{corollary:App_noise_regularity}
	Assume Setting~\ref{setting:Full_discrete}
	and 
	let 
	$ p \in [1, \infty) $, 
	$ \gamma \in [0, \nicefrac{1}{2} + \beta ) $, 
	$ \rho \in [0, \nicefrac{1}{2} + \beta - \gamma) \cap [0, \nicefrac{1}{2} ) $.
	Then it holds for every
		$ s \in [0,T] $, $ t \in [s,T] $ 
	that
	\begin{equation}
	\begin{split}
	\label{eq:DownToGridEstimate}
	&
	\sup\nolimits_{\theta \in \varpi_T}
	\sup\nolimits_{J \in \mathcal{P}( \mathbb{U} )}
	\sup\nolimits_{ I \in \mathcal{P}_0(\H) } 
	\| \O_t^{\theta, I, J} 
	- 
	\O_s^{\theta, I, J}
	\|_{\L^p( \P; H_\gamma)}
	\\
	&
	\leq
	\tfrac{3 \max [ \{ p, 2 \} ]^3
		\| B \|_{\HS(U,H_\beta)}
		[\max \{ T, 1 \} ]^{
			2	
		}
		\max \{
		| \sup\nolimits_{ h \in \H } \values_h |^{- 2 \beta },
		1
		\} 
		(
		1   
		+  
		8
		\| B \|_{\HS(U, H_\beta)}^2 
		)
		\| (-A)^{  	
			\min \{0, \gamma - \beta \} } \|_{L(H)}
	}{ \sqrt{  1 + 2 ( \beta - \max \{ \gamma, \beta \} - \rho ) } } 
	(t-s)^\rho
	.
	\end{split}
	\end{equation}
	
\end{corollary}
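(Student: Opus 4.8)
The plan is to derive Corollary~\ref{corollary:App_noise_regularity} from the temporal H\"older estimate in Lemma~\ref{lemma:noise_diff} by exactly the device already used in the proof of Corollary~\ref{Corollary:FiniteMoments}: lift the target smoothness exponent from $\gamma$ to $\max\{\gamma,\beta\}$ (so that the standing hypothesis $\beta\le\gamma$ of Lemma~\ref{lemma:noise_diff} becomes available) and afterwards absorb the mismatch via the bounded operator $(-A)^{\min\{0,\gamma-\beta\}}\in L(H)$, using the Galerkin projections $P_I$, $\hat P_J$ only through the trivial bound $\|P_IB\hat P_J\|_{\HS(U,H_\beta)}\le\|B\|_{\HS(U,H_\beta)}$.

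Concretely, I would fix $\theta\in\varpi_T$, $I\in\mathcal{P}_0(\H)$, $J\in\mathcal{P}(\mathbb{U})$ and note first that the data of Setting~\ref{setting:Full_discrete} together with $\chi^{\theta,I,J}$, $P_IB\hat P_J$ and $\O^{\theta,I,J}$ satisfy Setting~\ref{setting:Strengthened_strong_a_priori_moment_bounds} with $\beta=\beta$, $\gamma=\max\{\gamma,\beta\}$, $T=T$, $\theta=\theta$, $B=(U\ni u\mapsto P_IB\hat P_J(u)\in H_\beta)$, $\chi=\chi^{\theta,I,J}$, and $\O=([0,T]\times\Omega\ni(t,\omega)\mapsto\O_t^{\theta,I,J}(\omega)\in H_{\max\{\gamma,\beta\}})$: indeed $P_IB\hat P_J\in\HS(U,H_\beta)$ because $B\in\HS(U,H_\beta)$, $P_I$ restricts to a bounded (orthogonal) projection on $H_\beta$ and $\hat P_J$ is a bounded (orthogonal) projection on $U$; the process $\O^{\theta,I,J}$ takes values in the finite-dimensional space $P_I(H)\subseteq H_{\max\{\gamma,\beta\}}$; $\O_0^{\theta,I,J}=0$; and the recursion \eqref{eq:Scheme} is literally \eqref{eq:Numerics} for this choice of data. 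I would also record that $\max\{\gamma,\beta\}\in[0,\nicefrac12+\beta)$, that $\beta\le\max\{\gamma,\beta\}$, and that $\rho\in[0,\nicefrac12+\beta-\max\{\gamma,\beta\})$: for the last point one distinguishes $\gamma\ge\beta$, where $\nicefrac12+\beta-\max\{\gamma,\beta\}=\nicefrac12+\beta-\gamma$ and the hypothesis $\rho<\nicefrac12+\beta-\gamma$ applies, and $\gamma<\beta$, where $\nicefrac12+\beta-\max\{\gamma,\beta\}=\nicefrac12$ and the hypothesis $\rho<\nicefrac12$ applies. With these in hand, Lemma~\ref{lemma:noise_diff} applied with $p=\max\{p,2\}$ gives, for all $s\in[0,T]$, $t\in[s,T]$, an upper bound for $\|\O_t^{\theta,I,J}-\O_s^{\theta,I,J}\|_{\L^{\max\{p,2\}}(\P;H_{\max\{\gamma,\beta\}})}$ equal to the right-hand side of \eqref{eq:DownToGridEstimate} but with $\|B\|_{\HS(U,H_\beta)}$ replaced by $\|P_IB\hat P_J\|_{\HS(U,H_\beta)}$ (in both occurrences) and without the factor $\|(-A)^{\min\{0,\gamma-\beta\}}\|_{L(H)}$. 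Finally I would chain three elementary, $(\theta,I,J)$-uniform estimates: (i) $\|P_IB\hat P_J\|_{\HS(U,H_\beta)}\le\|B\|_{\HS(U,H_\beta)}$ from the projection property above; (ii) $\|\cdot\|_{\L^p(\P;\,\cdot)}\le\|\cdot\|_{\L^{\max\{p,2\}}(\P;\,\cdot)}$ by H\"older's inequality since $p\le\max\{p,2\}$; and (iii) $\|v\|_{H_\gamma}\le\|(-A)^{\min\{0,\gamma-\beta\}}\|_{L(H)}\,\|v\|_{H_{\max\{\gamma,\beta\}}}$ for $v\in H_{\max\{\gamma,\beta\}}$, which is just $\gamma=\max\{\gamma,\beta\}+\min\{0,\gamma-\beta\}$ together with $(-A)^\gamma=(-A)^{\min\{0,\gamma-\beta\}}(-A)^{\max\{\gamma,\beta\}}$. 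Since the resulting bound is independent of $\theta$, $I$, $J$, taking the supremum over $\theta\in\varpi_T$, $J\in\mathcal{P}(\mathbb{U})$, $I\in\mathcal{P}_0(\H)$ yields \eqref{eq:DownToGridEstimate}.

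There is no genuine analytic difficulty here: once the substitutions into Lemma~\ref{lemma:noise_diff} are set up correctly, the stated constant comes out directly. The only points that require a little care are the verification that the exponent $\rho$ still lies in the admissible range after replacing $\gamma$ by $\max\{\gamma,\beta\}$ — which is precisely where both components of the hypothesis $\rho\in[0,\nicefrac12+\beta-\gamma)\cap[0,\nicefrac12)$ are used — and the uniform control of the Galerkin-type operator $P_IB\hat P_J$ by $B$ in $\HS(U,H_\beta)$; everything else is bookkeeping.
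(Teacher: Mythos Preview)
Your proposal is correct and follows exactly the same approach as the paper: apply Lemma~\ref{lemma:noise_diff} with $\gamma$ replaced by $\max\{\gamma,\beta\}$, $B$ replaced by $P_IB\hat P_J$, and $p$ replaced by $\max\{p,2\}$, then pass back to $H_\gamma$ and $\L^p$ via the bounded operator $(-A)^{\min\{0,\gamma-\beta\}}$, H\"older's inequality, and the projection bound $\|P_IB\hat P_J\|_{\HS(U,H_\beta)}\le\|B\|_{\HS(U,H_\beta)}$. You have in fact spelled out more carefully than the paper does the verification that $\rho$ remains in the admissible range after the substitution $\gamma\mapsto\max\{\gamma,\beta\}$.
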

\begin{proof}[Proof of Corollary~\ref{corollary:App_noise_regularity}]  
	Note that
	Lemma~\ref{lemma:noise_diff}
	(with
	$ H = H $,
	$ \beta = \beta $, 
	$ \gamma = \max \{ \gamma, \beta \} $, 
	$ T = T $,
	$ \theta = \theta $,
	$ ( \Omega, \F, \P ) = ( \Omega, \F, \P ) $,
	$ ( \f_t )_{ t \in [0,T] } = ( \f_t )_{ t \in [0,T] } $, 
	$ ( W_t )_{ t \in [0,T] } = ( W_t )_{ t \in [0,T] } $, 
	$ B = ( U \ni u \mapsto P_I B \hat P_J (u)  \in H_\beta ) $,
	$ \chi = \chi^{ \theta, I, J } $,
	$ \O = ( [0,T] \times \Omega \ni (t,\omega) 
	\mapsto  \O_t^{\theta, I, J}(\omega) \in H_{ \max \{ \gamma, \beta \} } ) $,
	$ p = \max \{ p, 2 \} $,  
	$ \rho = \rho $
	for
	$ \theta \in \varpi_T $,
	$ I \in \mathcal{P}_0(\H) $,
	$ J \in \mathcal{P}( \mathbb{U} ) $ 
	in the notation of Lemma~\ref{lemma:noise_diff}) 
	establishes~\eqref{eq:DownToGridEstimate}.
	The proof of 
	Corollary~\ref{corollary:App_noise_regularity}
	is thus completed.
\end{proof} 
\begin{corollary}
	\label{corollary:noise_approximation_converges}
	Assume Setting~\ref{setting:Full_discrete},
	let   
	$ p, C \in [1,\infty) $,  
	$ \gamma \in [0, \nicefrac{1}{2} + \beta ) $,
	$ \eta \in [0, \nicefrac{1}{2} + \beta - \gamma ) $,   
    $ \rho \in [0, \nicefrac{1}{2} + \beta - \gamma) \cap [0, \nicefrac{1}{2} ) $, 
	assume for every 
$ s \in [0,T] $,
$ \theta \in \varpi_T $,
$ I \in \mathcal{P}_0(\H) $, 
$ J \in \mathcal{P}( \mathbb{U} ) $
that
$ 	\| \chi_{ \llcorner s \lrcorner_\theta }^{ \theta, I, J }  
-
1 
\|_{\L^{ \max \{p, 2\} }(\P; \R)} \leq C [ | \theta |_T ]^{ \rho } $,
	and
	let $ O \colon [0,T] \times \Omega \to H_\gamma $
	be a stochastic process 
	which satisfies for every $ t \in [0,T] $ that
	$ [ 
	O_t
	]_{\P, \B(H_\gamma)} = \int_0^t e^{ (t-s) A } B \, dW_s $.
	Then it holds 
	for every  
	$ I \in \mathcal{P}_0(\H) $,
	$ J \in \mathcal{P}( \mathbb{U} ) $,
	$ K \in \mathcal{P}(\H) $,
	$ \theta \in \varpi_T $
	with $ I \subseteq K $
	that
	\begin{equation}
	\begin{split}
	& 
	\sup\nolimits_{t\in [0,T]} 
	\| \O_t^{\theta, I, J} -  P_K O_t 
	\|_{\L^p(\P; H_\gamma)}
	\\
	&
	\leq
	\tfrac{  \max \{p, 2\}  
		[ \max \{ T, 1 \} ]^{\nicefrac{1}{2} + \beta } } { \sqrt{2 (1 - 2 \max \{ 0, \gamma + \eta - \beta \} )} } 
	\| 
	(-A)^{ \min \{ 0, \gamma + \eta - \beta \} }  
	\|_{L(H)} 
	\|  B - P_I B \hat P_J \|_{\HS(U, H_{\beta - \eta } )} 
	\\
	&
 	\quad 
	+
	\tfrac{ 8 [\max \{p, 2\}]^3  C  
		[\max\{T, 1\}]^{ \nicefrac{3}{2} + \beta  }   
	}{\sqrt{  1 - 2 \rho - 2 \max \{ 0, \gamma - \beta \}  } 
	}
\| B  \|_{\HS(U, H_{\beta})} 	\| ( -A )^{ \min \{ 0, \gamma - \beta \} } \|_{L(H)}
	\\
	&
	\qquad
	\cdot
	\big(
	1
	+ 
	| \sup\nolimits_{h \in \H} \values_h |^{-2\beta}
	\| B \|_{\HS(U,H_{\beta})}^2
	\big)
	[ |\theta|_T ]^{\rho }  
	.
	\end{split}
	\end{equation}
\end{corollary}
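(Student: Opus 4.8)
The plan is to recognize the fully discrete scheme \eqref{eq:Scheme} as a special instance of the temporally semi-discrete scheme \eqref{eq:Numerics} of Setting~\ref{setting:Strengthened_strong_a_priori_moment_bounds} and to read off the asserted estimate from Lemma~\ref{lemma:error_exact_counterpart}. Concretely, I would fix $ I \in \mathcal{P}_0(\H) $, $ J \in \mathcal{P}(\mathbb{U}) $, $ K \in \mathcal{P}(\H) $, and $ \theta \in \varpi_T $ with $ I \subseteq K $, and apply Lemma~\ref{lemma:error_exact_counterpart} with the operator $ B $ there replaced by $ U \ni u \mapsto P_I B \hat P_J(u) \in H_\beta $, with $ \chi $ there replaced by $ \chi^{\theta, I, J} $, with $ \O $ there replaced by (a continuous $ H_\gamma $-valued version of) $ \O^{\theta, I, J} $ (legitimate since $ P_I(H) $ is finite-dimensional, hence contained in $ H_\gamma $), with $ \tilde B = P_K B $, with $ p $ replaced by $ \max\{p,2\} $, and with $ C $, $ \eta $, $ \rho $, and $ O $ replaced by $ C $, $ \eta $, $ \rho $, and $ P_K O $, respectively.

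Before invoking the lemma I would check three compatibility points. First, since $ P_I $ and $ \hat P_J $ are orthogonal projections and $ P_I $ commutes with $ (-A)^r $ for all $ r \in \R $, one has $ P_I B \hat P_J \in \HS(U, H_\beta) $ with $ \| P_I B \hat P_J \|_{\HS(U, H_r)} \leq \| B \|_{\HS(U, H_r)} $, and likewise $ P_K B \in \HS(U, H_\beta) $ with $ \| P_K B \|_{\HS(U, H_r)} \leq \| B \|_{\HS(U, H_r)} $. Second, \eqref{eq:Scheme} is obtained from \eqref{eq:Numerics} by exactly the substitutions just described: the factors $ \chi_{\llcorner t \lrcorner_\theta} $ and $ e^{(t - \llcorner t \lrcorner_\theta)A} $ in the numerator and the taming denominators match term by term once $ B $ is replaced by $ P_I B \hat P_J $, so $ \O^{\theta, I, J} $ is indeed an instance of the process $ \O $ from Setting~\ref{setting:Strengthened_strong_a_priori_moment_bounds}. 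Third, because each element of $ K $ is an eigenvector of $ A $, the projection $ P_K $ commutes with $ e^{(t-s)A} $, whence $ [ P_K O_t ]_{\P, \mathcal{B}(H_\gamma)} = \int_0^t e^{(t-s)A} (P_K B) \, dW_s $, so $ P_K O $ has precisely the structure required of the comparison process in Lemma~\ref{lemma:error_exact_counterpart} with $ \tilde B = P_K B $. The hypothesis $ \| \chi^{\theta, I, J}_{\llcorner s \lrcorner_\theta} - 1 \|_{\L^{\max\{p,2\}}(\P; \R)} \leq C [ |\theta|_T ]^\rho $ required by the lemma is exactly the one assumed in the corollary.

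Lemma~\ref{lemma:error_exact_counterpart} then bounds $ \sup_{t\in[0,T]} \| \O^{\theta, I, J}_t - P_K O_t \|_{\L^{\max\{p,2\}}(\P; H_\gamma)} $ by a sum of two terms, the first proportional to $ \| P_K B - P_I B \hat P_J \|_{\HS(U, H_{\beta - \eta})} $ and the second proportional to $ \| P_I B \hat P_J \|_{\HS(U, H_\beta)} ( 1 + |\sup_{h \in \H} \values_h|^{-2\beta} \| P_I B \hat P_J \|_{\HS(U, H_\beta)}^2 ) [ |\theta|_T ]^\rho $. To reach the stated form I would finish with three elementary reductions: using $ I \subseteq K $ (hence $ P_K P_I = P_I $) to write $ P_K B - P_I B \hat P_J = P_K ( B - P_I B \hat P_J ) $, so that $ \| P_K B - P_I B \hat P_J \|_{\HS(U, H_{\beta - \eta})} \leq \| B - P_I B \hat P_J \|_{\HS(U, H_{\beta - \eta})} $; using $ \| P_I B \hat P_J \|_{\HS(U, H_\beta)} \leq \| B \|_{\HS(U, H_\beta)} $; and using $ \| \cdot \|_{\L^p(\P; H_\gamma)} \leq \| \cdot \|_{\L^{\max\{p,2\}}(\P; H_\gamma)} $ since $ \P $ is a probability measure. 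Substituting these three estimates into the conclusion of Lemma~\ref{lemma:error_exact_counterpart} yields exactly the claimed inequality. There is no genuinely hard analytic step here, as the substantive work is already contained in Lemma~\ref{lemma:error_exact_counterpart}; the only points demanding care are the term-by-term identification of \eqref{eq:Scheme} with \eqref{eq:Numerics} and the $ I \subseteq K $ trick that lets $ \| P_K B - P_I B \hat P_J \| $ be replaced by $ \| B - P_I B \hat P_J \| $ on the right-hand side.
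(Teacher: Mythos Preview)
Your proposal is correct and follows essentially the same approach as the paper: both apply Lemma~\ref{lemma:error_exact_counterpart} with $B$ replaced by $P_I B \hat P_J$, $\tilde B = P_K B$, $p$ replaced by $\max\{p,2\}$, and $O$ replaced by $P_K O$, and then pass from the resulting bound to the stated one via the projection inequalities you list. In fact your write-up is more explicit than the paper's, which simply records the output of Lemma~\ref{lemma:error_exact_counterpart} and declares the proof complete, leaving the reductions $\|P_K B - P_I B \hat P_J\|_{\HS(U,H_{\beta-\eta})} \le \|B - P_I B \hat P_J\|_{\HS(U,H_{\beta-\eta})}$ (via $I\subseteq K$) and $\|P_I B \hat P_J\|_{\HS(U,H_\beta)} \le \|B\|_{\HS(U,H_\beta)}$ implicit.
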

\begin{proof}[Proof of Corollary~\ref{corollary:noise_approximation_converges}]
Note that
Lemma~\ref{lemma:error_exact_counterpart}
(with
$ H = H $,
$ \beta = \beta $, 
$ \gamma = \gamma $,
$ T = T $, 
$ ( \Omega, \F, \P ) 
=
( \Omega, \F, \P ) $,
$ ( \f_t )_{ t \in [0,T] } = ( \f_t )_{ t \in [0,T] } $,
$ ( W_t )_{ t \in [0,T] } = ( W_t )_{ t \in [0,T] } $, 
$ B = ( U \ni u \mapsto P_I B \hat P_J (u) \in H_\beta ) $, 
$ \chi = \chi^{ \theta, I, J } $,
$ \O = 
( [0,T] \times \Omega \ni (t, \omega) 
\mapsto 
\O_t^{\theta, I, J}(\omega )
\in H_{ \gamma } ) $, 
$ C = C $,
$ \tilde B = ( U \ni u \mapsto P_K B(u) \in H_\beta ) $, 
$ p = \max \{ p, 2 \} $, 
$ \eta = \eta $, 
$ \rho = \rho $, 
$ O = ( [0, T] \times \Omega \ni ( t, \omega ) \mapsto  
P_K  O_t (\omega) \in H_{ \gamma  } ) $
for  
$ \theta \in \varpi_T $,
$ I \in \mathcal{P}_0( \H ) $,
$ J \in \mathcal{P}( \mathbb{U} ) $,
$ K \in \mathcal{P}(\H) $ 
with 
$ I \subseteq K $
in the notation of Lemma~\ref{lemma:error_exact_counterpart}) 
ensures 
that for every
$ \theta \in \varpi_T $,
$ I \in \mathcal{P}_0(\H) $,
$ J \in \mathcal{P}( \mathbb{U} ) $,
$ K \in \mathcal{P}(\H) $
with
$ I \subseteq K $
it holds that
	\begin{equation}
\begin{split}
&
	\sup\nolimits_{t\in [0,T]} 
\| \O_t^{\theta, I, J} -  P_K O_t 
\|_{\L^p(\P; H_\gamma)} 
\\
&
\leq
\tfrac{  \max \{p, 2\}  
	[ \max \{ T, 1 \} ]^{\nicefrac{1}{2} + \beta } } { \sqrt{2 (1 - 2 \max \{ 0, \gamma + \eta - \beta \} )} } 
\| 
(-A)^{ \min \{ 0, \gamma + \eta - \beta \} }  
\|_{L(H)} 
	\|  P_K B - P_I B \hat P_J \|_{\HS(U, H_{\beta - \eta } )} 
\\
&
\quad 
+
\tfrac{ 8 [\max \{p, 2\}]^3  C  
	[\max\{T, 1\}]^{ \nicefrac{3}{2} + \beta  }   
}{\sqrt{  1 - 2 \rho - 2 \max \{ 0, \gamma - \beta \}  } 
}
\| P_I B \hat P_J \|_{\HS(U, H_{\beta})}
\| ( -A )^{ \min \{ 0, \gamma - \beta \} } \|_{L(H)}
\\
&
\qquad
\cdot
\big(
1
+ 
| \sup\nolimits_{h \in \H} \values_h |^{-2\beta}
\| P_I B \hat P_J \|_{\HS(U,H_{\beta})}^2
\big)
[ |\theta|_T ]^{\rho }
.
\end{split}
\end{equation}
This completes the proof of Corollary~\ref{corollary:noise_approximation_converges}.
\end{proof}
\begin{corollary}
	\label{corollary:ExpMomentsNoise}
	Assume Setting~\ref{setting:Full_discrete}
	and let $ \varepsilon \in [0, \nicefrac{ 1 }{ (8 [
		\max \{ \| B \|_{ \HS(U,H) }, 1 \} ]^2
		\max \{ T, 1 \} )^2 } ) $.
	Then 
	\begin{equation}
	\begin{split}
	\label{eq:Exp2}
	&
	\sup\nolimits_{\theta \in \varpi_T}
	\sup\nolimits_{J \in \mathcal{P}( \mathbb{U} ) }
	\sup\nolimits_{I \in \mathcal{P}_0(\H) }
	\sup\nolimits_{s \in [0,T]} 
	\E [
	\exp
	(
	\varepsilon
	\|  
	\O_s^{\theta, I, J} 
	\|_H^2   
	)
	] 
	< 
	\infty 
	.
	\end{split}
	\end{equation}
\end{corollary}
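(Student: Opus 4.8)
The plan is to obtain Corollary~\ref{corollary:ExpMomentsNoise} as a direct consequence of Lemma~\ref{lemma:Exp_regularity}, exploiting that for each fixed $\theta \in \varpi_T$, $I \in \mathcal{P}_0(\H)$, and $J \in \mathcal{P}(\mathbb{U})$ the fully discrete scheme~\eqref{eq:Scheme} is precisely the temporally semi-discrete scheme~\eqref{eq:Numerics} of Setting~\ref{setting:Strengthened_strong_a_priori_moment_bounds} with the noise operator $B$ replaced by the truncated Galerkin operator $P_I B \hat P_J$.

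First I would fix $\theta \in \varpi_T$, $I \in \mathcal{P}_0(\H)$, and $J \in \mathcal{P}(\mathbb{U})$ and record the elementary estimates $\|P_I B \hat P_J\|_{\HS(U,H)} \leq \|B\|_{\HS(U,H)}$ and $\|P_I B \hat P_J\|_{\HS(U,H_\beta)} \leq \|B\|_{\HS(U,H_\beta)}$. These hold because $\hat P_J \in L(U)$ is an orthogonal projection (so $\|\hat P_J\|_{L(U)} \leq 1$) and $P_I \in L(H)$ is the orthogonal projection onto the finite-dimensional linear span of $I$, which consists of eigenvectors of $A$, so that $P_I$ commutes with $(-A)^\beta$ and satisfies $\|P_I\|_{L(H_\beta)} \leq 1$; in particular $P_I B \hat P_J \in \HS(U,H_\beta)$ and $P_I(H)$ is contained in every interpolation space $H_r$, $r \in \R$.

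Next I would invoke Lemma~\ref{lemma:Exp_regularity} (with $\beta = \beta$, $\gamma = \beta$, $T = T$, $\theta = \theta$, the probability space, filtration and Wiener process chosen as in Setting~\ref{setting:Full_discrete}, $B = (U \ni u \mapsto P_I B \hat P_J(u) \in H_\beta)$, $\chi = \chi^{\theta, I, J}$, $\O = ([0,T] \times \Omega \ni (t,\omega) \mapsto \O_t^{\theta, I, J}(\omega) \in H_\beta)$, and $\varepsilon = \varepsilon$ in the notation of Lemma~\ref{lemma:Exp_regularity}). The recursion~\eqref{eq:Scheme} coincides with~\eqref{eq:Numerics} for these data (taking $\gamma = \beta$ and noting $\O_0^{\theta,I,J} = 0$), and the prescribed $\varepsilon$ lies in the range admitted by Lemma~\ref{lemma:Exp_regularity} for the projected operator, since $\|P_I B \hat P_J\|_{\HS(U,H)} \leq \|B\|_{\HS(U,H)}$ forces $(8 [\max\{\|P_I B \hat P_J\|_{\HS(U,H)}, 1\}]^2 \max\{T,1\})^{-2} \geq (8 [\max\{\|B\|_{\HS(U,H)}, 1\}]^2 \max\{T,1\})^{-2}$. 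Lemma~\ref{lemma:Exp_regularity} then yields for every $t \in [0,T]$ that $\E[\exp(\varepsilon \|\O_t^{\theta,I,J}\|_H^2)] \leq 2 \, [1 - \varepsilon^2 (8 \|P_I B \hat P_J\|_{\HS(U,H)} \max\{\|P_I B \hat P_J\|_{\HS(U,H)}, 1\} \max\{T,1\})^4]^{-1}$.

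Finally, using $\|P_I B \hat P_J\|_{\HS(U,H)} \leq \|B\|_{\HS(U,H)}$ once more, together with the fact that the prescribed range of $\varepsilon$ forces $\varepsilon^2 (8 \|B\|_{\HS(U,H)} \max\{\|B\|_{\HS(U,H)}, 1\} \max\{T,1\})^4 < \|B\|_{\HS(U,H)}^4 \, [\max\{\|B\|_{\HS(U,H)}, 1\}]^{-4} \leq 1$, the right-hand side of the preceding bound is bounded from above by the finite constant $2 \, [1 - \varepsilon^2 (8 \|B\|_{\HS(U,H)} \max\{\|B\|_{\HS(U,H)}, 1\} \max\{T,1\})^4]^{-1}$, which does not depend on $\theta$, $I$, $J$, or $t$. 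Taking the supremum over $t \in [0,T]$, $I \in \mathcal{P}_0(\H)$, $J \in \mathcal{P}(\mathbb{U})$, and $\theta \in \varpi_T$ then establishes~\eqref{eq:Exp2}. I do not expect a substantial obstacle here: the only points requiring care are that the Galerkin projection $P_I$ commutes with the fractional powers of $-A$ — so that the Hilbert--Schmidt norm in $H_\beta$ is non-increasing under $B \mapsto P_I B \hat P_J$ — and that replacing $B$ by the smaller-norm operator $P_I B \hat P_J$ can only enlarge the admissible $\varepsilon$-interval and shrink the constant appearing in Lemma~\ref{lemma:Exp_regularity}.
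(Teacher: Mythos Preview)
Your proposal is correct and follows essentially the same approach as the paper: both apply Lemma~\ref{lemma:Exp_regularity} with the projected operator $P_I B \hat P_J$ in place of $B$ and then observe that the resulting bound is uniform in $\theta, I, J, t$ because $\|P_I B \hat P_J\|_{\HS(U,H)} \leq \|B\|_{\HS(U,H)}$. The only cosmetic difference is that the paper invokes Lemma~\ref{lemma:Exp_regularity} with $\gamma = 0$ whereas you take $\gamma = \beta$; since the conclusion of Lemma~\ref{lemma:Exp_regularity} depends only on $\|\O_t\|_H$ and not on the choice of $\gamma$, either choice works and your more detailed justification of the uniform constant is a welcome elaboration of what the paper leaves implicit.
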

\begin{proof}[Proof of Corollary~\ref{corollary:ExpMomentsNoise}]
Note that
	Lemma~\ref{lemma:Exp_regularity}
	(with 
	$ H = H $,
	$ \beta = \beta $,
	$ \gamma = 0 $,
	$ T = T $,
	$ \theta = \theta $,
	$ ( \Omega, \F, \P) = ( \Omega, \F, \P ) $,
	$ ( \f_t )_{ t \in [0,T] } = ( \f_t )_{ t \in [0,T] } $,
	$ B = ( U \ni x \mapsto P_I B \hat P_J(x) \in H_\beta ) $, 
	$ \chi = \chi^{ \theta, I, J } $,
	$ \O = ( [0,T] \times \Omega \ni (t,\omega) \mapsto 
	\O_t^{ \theta, I, J }( \omega ) \in H ) $,
	$ \varepsilon = \varepsilon $
	for  
	$ I \in \mathcal{P}_0(\H) $,
	$ J \in \mathcal{P}( \mathbb{U} ) $,
	$ \theta \in \varpi_T $
	in the notation of
	Lemma~\ref{lemma:Exp_regularity})
	assures that~\eqref{eq:Exp2} holds.
	The proof of Corollary~\ref{corollary:ExpMomentsNoise}
	is thus completed.
\end{proof}
\subsubsection*{Acknowledgements}
This project has been partially supported through the SNSF-Research project $ 200021\_156603 $ ''Numerical 
approximations of nonlinear stochastic ordinary and partial differential equations''.
\newpage
\bibliographystyle{acm}
\bibliography{../Bib/bibfile}

\end{document}